\theoremstyle{plain}
\newtheorem{theorem}{Theorem}[section]
\newtheorem{lemma}[theorem]{Lemma}
\newtheorem{proposition}[theorem]{Proposition}
\newtheorem{corollary}[theorem]{Corollary}
\theoremstyle{definition}
\newtheorem{definition}[theorem]{Definition}
\newtheorem{remark}[theorem]{Remark}
\newtheorem{example}[theorem]{Example}
\newtheorem{question}[theorem]{Question}
\newcommand{\rleft}{\mathopen{}\mathclose\bgroup\left}
\newcommand{\rright}{\aftergroup\egroup\right}
\newcommand{\C}{{\mathbb{C}}}
\newcommand{\K}{{\mathbb{K}}}
\newcommand{\R}{{\mathbb{R}}}
\newcommand{\Z}{{\mathbb{Z}}}
\newcommand{\p}{{\mathbb{P}}}
\newcommand{\Q}{{\mathbb{Q}}}
\newcommand{\Cm}{{\mathcal{C}}}
\newcommand{\Fm}{{\mathcal{F}}}
\newcommand{\Om}{{\mathcal{O}}}
\newcommand{\Pm}{{\mathcal{P}}}
\newcommand{\Am}{{\mathcal{AP}}}
\newcommand{\Zm}{{\mathcal{Z}}}
\newcommand{\cL}{{\mathcal{L}}}
\newcommand{\Ann}{{\mathrm{Ann}}}
\newcommand{\BKK}{BKK }
\newcommand{\diff}{\mathop{}\!d}
\newcommand{\Diff}{{\mathrm{Diff}}}
\newcommand{\GZ}{{\mathrm{GZ}}}
\newcommand{\Hom}{{\mathrm{Hom}}}
\newcommand{\lspan}{{\operatorname{span}}}
\newcommand{\Pic}{{\mathrm{Pic}}}
\newcommand{\PP}{\mathrm{PP}}
\newcommand{\pt}{{\mathrm{pt}}}
\newcommand{\rk}{{\mathrm{rk}}}
\newcommand{\SL}{\operatorname{SL}}
\newcommand{\Sym}{{\mathrm{Sym}}}
\newcommand{\Vol}{{\mathrm{Vol}}}
\begin{document}
\selectlanguage{english}

%%%%%%%%%%%%%%%%%%%%%%%%%%%%%%%%%%%%%%%%%%%%%%%%%%%%%%%%%%%%%%%% 
\title{Cohomology Rings of Toric bundles and the ring of conditions}
\dedicatory{To the memory of Ernest Borisovich Vinberg}
\author{Johannes Hofscheier}
\address[J.\,Hofscheier]{School of Mathematical Sciences\\University of Nottingham\\Nottingham, NG7 2RD\\UK}
\email{johannes.hofscheier@nottingham.ac.uk}
\author{Askold Khovanskii}
\address[A.\,Khovanskii]{Department of Mathematics, University of Toronto, Toronto, Canada; Moscow Independent University, Moscow, Russia.}
\email{askold@math.utoronto.ca}
\author{Leonid Monin}
\address[L.\,Monin]{University of Bristol, School of Mathematics, BS8 1TW, Bristol, UK}
\email{leonid.monin@bristol.ac.uk}

\subjclass[2010]{Primary 14M25, 52B20 Secondary 14C17, 14N10}
\keywords{Toric varieties, toric bundles, spherical varieties, Newton polyhedra, Newton-Okounkov bodies, singular cohomology, ring of conditions}

\begin{abstract}
  The celebrated \BKK Theorem expresses the number of roots of a system of generic Laurent polynomials in terms of the  mixed volume of the corresponding system of Newton polytopes.
  In \cite{KP}, Pukhlikov and the second author noticed that the cohomology ring of smooth projective toric varieties over $\C$ can be computed via the \BKK Theorem.
  This complemented the known descriptions of the cohomology ring of toric varieties, like the one in terms of Stanley-Reisner algebras.

  In \cite{US03}, Sankaran and Uma generalized the ``Stanley-Reisner description'' to the case of toric bundles, i.e. equivariant compactifications of (not necessarily algebraic) torus principal bundles.
  We provide a description of the cohomology ring of toric bundles which is based on a generalization of the \BKK Theorem, and thus extends  the approach by Pukhlikov and the second author.
  Indeed, for every cohomology class of the base of the toric bundle, we obtain a BKK-type theorem.
  Furthermore, our proof relies on a description of graded-commutative algebras which satisfy Poincar\'e duality.
  
  From this computation of the cohomology ring of toric bundles, we obtain a description of the ring of conditions of horospherical homogeneous spaces as well as a version of Brion-Kazarnovskii theorem for them.
  We conclude the manuscript with a number of examples.
  In particular, we apply our results to toric bundles over a full flag variety $G/B$.
  The description that we get generalizes the corresponding description of the cohomology ring of toric varieties as well as the one of full flag varieties $G/B$ previously obtained by Kaveh in \cite{KavehVolume}. 
\end{abstract}

\maketitle

%%%%%%%%%%%%%%%%%%%%%%%%%%%%%%%%%%%%%%%%%%%%%%%%%%%%%%%%%%%%%%%%%

\section{Introduction}
\label{sec:intro}

In this paper we discuss descriptions of the cohomology ring of toric bundles and how to use them to compute the ring of conditions of horospherical homogeneous spaces.
Our main contributions are:
\begin{enumerate}
    \item A generalization of the celebrated \BKK Theorem (Theorem~\ref{BKK}). We also provide an alternative interpretation (Theorem~\ref{BKK1}). In particular, this yields a convex theoretic formula for the self-intersection polynomial on the second cohomology of a toric bundle (Theorem~\ref{thm:BKKdeg2}).
    \item A description of (not necessarily finite-dimensional) graded commutative algebras with Poincar\'{e} duality (Theorem~\ref{thm-nsdquatient}).
    \item A description of the cohomology ring of toric bundles (Theorem~\ref{cohbundle}). In particular we retrieve the early description by Uma and Sankaran (see the proof of Theorem~\ref{thm:US} given in Section~\ref{sec:cohtoricbundle}).
    \item A description of the ring of conditions of horospherical homogeneous spaces (Corollary~\ref{cor:roch}).
    \item We illustrate our results by reproving the Brion-Kazarnovskii theorem for horospherical varieties (Theorem~\ref{thm:brikaz}). We complement the illustration with the computation of the cohomology ring of some toric bundles over full flag varieties and the corresponding ring of conditions (Theorem~\ref{thm:G/U}).
\end{enumerate}

Let us start with a recollection of the classical toric case.
Let $T\simeq (\C^*)^n$ be an algebraic torus with character lattice $M$ and lattice of one-parameter subgroups $N$.
Further, let $X_\Sigma$ be a smooth projective $T$-toric variety given by a fan $\Sigma \subseteq N_\R\coloneqq N \otimes_\Z \R$.
Denote the rays of $\Sigma$ by $\Sigma(1) = \{ \rho_1,\ldots,\rho_s\}$ and their primitive generators in $N$ by $e_1,\ldots, e_s$.
Recall the following well-known description of the cohomology ring of $X_\Sigma$ (see, for instance, \cite[Theorem 12.4.1]{tor-var}):
\[
  H^*(X_\Sigma, \R)\simeq \R[x_1,\ldots, x_s]/ (I + J) \eqqcolon R_\Sigma,
\]
where $I$ is generated by monomials $x_{i_1}\cdots x_{i_t}$ such that $\rho_{i_1},\ldots, \rho_{i_t}\in \Sigma(1)$ are distinct and do not form a cone in $\Sigma$ and $J = \rleft\langle \sum_{i=1}^s \chi(e_i)x_i \colon \chi \in M \rright\rangle$.
Note that $I$ depicts the Stanley-Reisner ideal of the fan $\Sigma$ and therefore we refer to this description as the \emph{Stanley-Reisner description} of $H^*(X_\Sigma,\R)$.
This description yields an algorithm to evaluate products of cohomology classes in the top degree of $R_\Sigma$.
In Section~\ref{sec:intersection_SR}, we provide further details.

On the other hand, given line bundles $L_1, \ldots, L_t$ on $X_\Sigma$, one can directly compute a top degree intersection product $c_1(L_1)^{k_1} \cdots c_1(L_t)^{k_t}$ in $H^*(X_\Sigma,\R)$ by using the \BKK theorem \cite{Kouchnirenko} (see also \cite{Bernstein, BKK}).
Here, $c_1(L_i) \in H^2(X_\Sigma,\R)$ denotes the first Chern class of $L_i$.
More precisely, as any line bundle on $X_\Sigma$ is the difference of two ample line bundles, it suffices to evaluate products $c_1(L_1)^{k_1} \cdots c_1(L_t)^{k_t}$ with all $L_i$ ample.
By the toric dictionary, the ample line bundles $L_i$ correspond to polytopes $P_i$ whose normal fan coarsens the fan $\Sigma$.
Using, the \BKK Theorem, we obtain
\[
  c_1(L_1)^{k_1} \cdots c_1(L_t)^{k_t} = n! \cdot V(\underbrace{P_1,\ldots, P_1}_{k_1\;\text{times}}, \ldots, \underbrace{P_t, \ldots, P_t}_{k_t\;\text{times}})
\]
where $V(P_1, \ldots, P_1, \ldots, P_t, \ldots, P_t)$ denotes the mixed volume of the $n$-tuple $(P_1, \ldots, P_1, \ldots, P_t, \ldots, P_t)$.
In \cite[Section 1.4]{KP}, Pukhlikov and the second author observed that the information on these intersection products suffices to regain a description of the cohomology ring $H^*(X_\Sigma, \R)$.
As virtual polytopes play a crucial role in this description, we refer to it as the \emph{virtual polytope description} of the cohomology ring $H^*(X_\Sigma, \R)$.
In Section~\ref{sec:VP_description}, we provide further details.

In \cite{KavehVolume}, Kaveh used a similar approach to study the cohomology ring of spherical varieties provided that it is generated in degree 2.
In particular, he obtained a ``volume-polynomial-description'' for the cohomology ring of full flag varieties $G/B$ by using the relation between the volume of string polytopes and the degree of the corresponding line bundle from \cite{alexeevbrion}.
Recently, in \cite{KaveKhovanskii}, Kaveh and the second author introduced the \emph{ring of complete intersections} for an (arbitrary) algebraic variety.
For a smooth complete algebraic variety $X$ whose cohomology ring $H^*(X,\R)$ is generated in degree 2, the ring of complete intersections coincides with $H^*(X,\R)$. They also provide a description of the ring of complete intersections which generalizes the results of \cite{KavehVolume}.

Certainly, the cohomology ring of toric varieties accepts many more descriptions.
In particular, we want to emphasize the description by Brion \cite{Br96}.
Brion's description is a byproduct of the computation of the \emph{equivariant} cohomology ring.
In \cite{Br96}, Brion proves that $H^*(X_\Sigma, \R)$ is isomorphic to the ring of piecewise polynomial functions on $\Sigma$ modulo the ideal generated by global linear functions.

Our main contribution is a generalization of this picture to the case of toric bundles.
For a principal $T$-torus bundle $p \colon E \to B$ over a closed orientable real manifold $B$ and a $T$-toric variety $X_\Sigma$, let $E_\Sigma \coloneqq (E\times X_\Sigma) / T$ be the associated toric bundle (see Section~\ref{sec:general} for details).
A generalization of the Stanley-Reisner description for the cohomology ring $H^*(E_\Sigma,\R)$ was obtained by Sankaran and Uma \cite{US03}.
In \cite{roch}, the first author noticed that Brion's description also generalizes to the case of toric bundles (for more details see Section~\ref{sec:roch}).
Like in the toric case, the description by Sankaran and Uma implicitly contains an algorithm to compute products of cohomology classes in the top degree.
In Section~\ref{sec:toric_bundles}, we provide further details.

We complement the descriptions of the cohomology ring of toric bundles with a generalization of the virtual polytope description. 
Like in the toric case our description relies on (a generalization of) the \BKK theorem (see Theorem~\ref{BKK}).
More precisely, we reduce the computation of intersection numbers on the toric bundle to intersection numbers on the base.
This provides a ``BKK-type theorem'' for any choice of a cohomology class in the base $\gamma \in H^*(B, \R)$.

An equivalent description of the generalized BKK Theorem can be given as follows.
Suppose the torus $T$ has rank $n$ and the real dimension of $B$ is $k$.
Similar to the toric case, a virtual polytope $\Delta$ defines a cohomology class $\rho(\Delta)\in H^2(E_\Sigma,\R)$ on the $T$-toric bundle $p \colon E_\Sigma\to B$.
In Theorem~\ref{BKK1}, for any given $j$, we define a map which associates to a virtual polytope $\Delta$ a cohomology class $\gamma_{2j}(\Delta)\in H^{2j}(B,\R)$ such that
\[
  \rho(\Delta)^{n+j} \cdot p^*(\gamma) = \gamma_{2j}(\Delta) \cdot \gamma \text{,}
\]
for any $\gamma\in H^{k-2j}(B,\R)$.
Here ``$\cdot$'' denotes the cup product on the respective cohomology ring.
We call the class $\gamma_{2j}(\Delta)$ \emph{the horizontal part} of $\rho(\Delta)^{n+j}$. 
We refer to Section~\ref{sec:bkk-tb} for further details.

In particular, our version of the BKK theorem allows us to compute the self-intersection polynomial on $H^2(E_\Sigma,\R)$ (see Theorem~\ref{thm:BKKdeg2}), that is the map
\[
H^2(E_\Sigma,\R) \to \R, \quad \gamma\mapsto \int_{E_\Sigma} \gamma^{n+k/2} \qquad \text{if $k$ is even and zero otherwise.}
\]
It should be mentioned that our statements above are true not only in the algebraic category, but more generally hold for smooth manifolds.

Furthermore, unlike the toric case or the case studied in \cite{KavehVolume}, the cohomology ring of $E_\Sigma$ is not generated in degree $2$ in general.
However, graded commutative algebras which satisfy Poincar\'e duality accept a description generalizing the one used by Pukhlikov and the second author (see Theorem~\ref{thm-nsdquatient}).

From the generalized BKK-theorem and our description of graded commutative algebras with Poincar\'e duality, we obtain a description of the cohomology ring of toric bundles (see Theorem~\ref{cohbundle}).
In Section~\ref{sec:cohtoricbundle} we show that our approach naturally yields an alternative description of the cohomology ring of toric bundles obtained by Uma and Sankaran \cite{US03}. 

Our description is well suited to compute cohomology rings of toric bundles over a fixed base manifold $B$.
In particular, a computation of the ring of conditions of horospherical homogeneous spaces naturally follows (Corollary~\ref{cor:roch}).
Recall that the ring of conditions is an intersection ring for (not necessarily complete) homogeneous spaces. 
Furthermore, for a connected complex reductive group $G$ a homogeneous space $G/H$ is called \emph{horospherical} if $H$ is a closed subgroup in $G$ containing a maximal unipotent subgroup. 

We conclude the manuscript by applying our results to the case of toric bundles over generalized flag varieties $G/P$ (also known as toroidal horospherical varieties). In Theorem~\ref{thm:brikaz} we obtain a version of the Brion-Kazarnovskii theorem (\cite{bri89,kaz87}) for our setting. In particular, we recover the results of \cite{KaveKhovanskii} (and thus the results of \cite{KavehVolume}). We continue with the case of toric bundles over full flag varieties $G/B$ for which we give two descriptions of the cohomology rings and the ring of conditions (Theorems~\ref{thm:G/HWeyl} and~\ref{thm:G/U}). The latter description agrees with a construction of Newton-Okounkov bodies from \cite{KKhor}. Indeed, we introduce the notion of \emph{fibered virtual polytopes} which is the extension of the family of polytopes accepting a projection onto a polytope in the positive Weyl chamber with fibers being string polytopes (see Sections~\ref{sec:string-poly} and~\ref{sec:tb-over-flag}).

%%%%%%%%%%%%%%%%%%%%%%%%%%%%%%%%%%%%%%%%%%%%%%%%%%%%%%%%%%%%%%%% 

\subsection*{Organization of the paper}
Section~\ref{sec:cohomology_toric} provides further details for the two descriptions of the cohomology ring of toric varieties mentioned above.
Section~\ref{sec:general} collects results about the cohomology ring of toric bundles and horospherical varieties.
Section~\ref{sec:bkk-tb} contains the statement of the generalized BKK theorem.
Section~\ref{sec:convex-chains} provides further details about virtual polytopes and the related convex chains.
Section~\ref{BKKproof} contains the proof of the generalized BKK theorem.
Section~\ref{commalg} studies graded commutative finite-dimensional algebras satisfying Poincar\'e duality.
Section~\ref{sec:applications} describes the cohomology ring of toric bundles (in general) and computes the ring of conditions of horospherical homogeneous spaces.
Section~\ref{sec:base-var} discusses a calculation of the self-intersection polynomial on the second cohomology group of toric bundles yielding an easier description of the cohomology ring provided it's generated in degree 2.
Section~\ref{sec:examples} contains a version of Brion-Kazarnovskii theorem and computes the cohomology ring of certain classes of toric bundles including toric bundles over full flag varieties $G/B$ and projective bundles.

%%%%%%%%%%%%%%%%%%%%%%%%%%%%%%%%%%%%%%%%%%%%%%%%%%%%%%%%%%%%%%%% 

\subsection*{Acknowledgements}

We thank Megumi Harada and Kiumars Kaveh for several helpful and inspiring conversations. We also would like to thank Michel Brion for his comments on the earlier version of this paper.
The first author is supported by a Nottingham Research Fellowship from the University of Nottingham.
The second author is partially supported by the Canadian Grant No.~156833-17.
The third author is supported by EPSRC Early Career Fellowship EP/R023379/1.
 
%%%%%%%%%%%%%%%%%%%%%%%%%%%%%%%%%%%%%%%%%%%%%%%%%%%%%%%%%%%%%%%%%

\section{The Cohomology Ring of a Toric Variety}
\label{sec:cohomology_toric}

We assume fundamental knowledge of toric geometry and refer to \cite{tor-var} for further details and references.
For the reader's convenience, we give further details of the two descriptions of the cohomology ring of toric varieties mentioned in Section~\ref{sec:intro}.
We continue to use the notation from the Introduction.
In particular, $M$ denotes a lattice, i.e., a finitely generated free abelian group, i.e., $M \simeq \Z^n$, $N = \Hom_\Z(M, \Z)$ its dual lattice, and $\langle \cdot, \cdot \rangle \cdot M \times N \to \Z$ their dual pairing.
Suppose that $X_\Sigma$ is a smooth projective toric variety given by a smooth projective fan $\Sigma \subseteq N_\R \coloneqq N\otimes \R$.

%%%%%%%%%%%%%%%%%%%%%%%%%%%%%%%%%%%%%%%%%%%%%%%%%%%%%%%%%%%%%%%% 

\subsection{Intersection products in the Stanley-Reisner description}
\label{sec:intersection_SR}

The Stanley-Reisner description of the cohomology ring yields an algorithm to compute products of cohomology classes in the top degree of $H^*(X_\Sigma,\R)$:
As $R_\Sigma$ is generated in degree 1, it suffices to consider monomials $x_{i_1}^{k_1}\ldots x_{i_t}^{k_t}$.
Recall that the graded piece of top degree of $R_\Sigma$, is one-dimensional and generated by $x_{i_1}\cdots x_{i_n}$ for any collection $i_1,\ldots, i_n$ of indices such that the rays $\rho_{i_1}, \ldots, \rho_{i_n}$ generate a full-dimensional cone in $\Sigma$.
Indeed, all such monomials $x_{i_1}\cdots x_{i_n}$ yield the same element in $R_\Sigma$.
Therefore, the evaluation of a monomial $x_{i_1}^{k_1}\ldots x_{i_t}^{k_t}$ in the top degree amounts to expressing it as a linear combination of square free monomials.

For a monomial $x_{i_1}^{k_1}\cdots x_{i_t}^{k_t}$ let $\sum_{i=1}^t(k_i -1)$ be its \emph{multiplicity}, so that being square free is equivalent to having multiplicity $0$.
To simplify notation, consider the monomial $x_1^{k_1}\cdots x_t^{k_t}$ (always possible by reordering the variables).
Suppose $x_1^{k_1}\cdots x_t^{k_t}$ is a monomial with multiplicity $m>0$ and $k_1>1$.
The goal is to express it as a linear combination of monomials of smaller multiplicity.
If $\rho_1,\ldots, \rho_t$ do not form a cone, then the monomial is equal to zero in $R_\Sigma$ and we are done.
Otherwise, the set $\{e_1,\ldots, e_t\}$ can be extended to a lattice basis of $N$, and thus there is $\chi \in M$ such that $\chi(e_1)=1$ and  $\chi(e_j)=0$, for $j=2,\ldots,t$.
Note that $\chi$ induces a linear relation in $J$, so that we obtain
\[
  x_1^{k_1}\ldots x_t^{k_t} = x_1^{k_1-1} \ldots x_t^{k_t} \cdot \rleft(-\sum_{k=t+1}^s \chi(e_k)x_k\rright) \text{.}
\]
This is a linear combination of monomials of multiplicity $m-1$.
Applying this procedure inductively, we end up with a linear combination of square free monomials, and thus obtain an evaluation of $x_1^{k_1} \cdots x_t^{k_t}$.

%%%%%%%%%%%%%%%%%%%%%%%%%%%%%%%%%%%%%%%%%%%%%%%%%%%%%%%%%%%%%%%%%

\subsection{The virtual polytope description}
\label{sec:VP_description}

As noted in the introduction, an arbitrary line bundle on $X_\Sigma$ can be expressed as a differences of ample line bundles which, by the toric dictionary, correspond to polytopes whose normal fan coarsens $\Sigma$.
Recall that a fan $\Sigma'$ in $N_\R$ \emph{coarsens} $\Sigma$ if every cone of $\Sigma$ is contained in a cone of $\Sigma'$ and $|\Sigma'| = |\Sigma|$.
One also says that $\Sigma$ \emph{refines} $\Sigma'$.
In particular, every cone of $\Sigma'$ is a union of cones of $\Sigma$.
One of the crucial ideas in \cite{KP} is to reformulate this observation on line bundles into convex geometry.
Therefore let us recall the definition of the space of virtual polytopes:

Let $\Pm_\Sigma^+$ be the set of polytopes in $M_\R \coloneqq M \otimes \R$ such that $\Sigma$ refines their normal fans.
Recall that the \emph{Minkowski sum} of two polytope $P,Q$ in $M_\R$ is given by
\[
  P + Q = \rleft\{ p + q \colon p \in P, q \in Q \rright\} \text{,}
\]
and that the normal fan of a Minkowski sum is the coarsest common refinement of the individual fans (see \cite[Proposition 7.12]{Ziegler}).
Thus the Minkowski sum endows the structure of a monoid on $\Pm_\Sigma^+$.
It is well-known that a commutative semigroup can be embedded in a group if the operation is \emph{cancellative}, i.e., $P_1 + Q = P_2 + Q$ implies $P_1 = P_2$.
It is straightforward to show that the Minkowski sum satisfies this property.
We denote the smallest group $\Pm_\Sigma$ containing the monoid $\Pm_\Sigma^+$ the group of \emph{virtual polytopes} (associated to the fan $\Sigma$).
Clearly $\Pm_\Sigma^+$ accepts a natural multiplication map by the nonnegative reals $\R_{\ge0}$ which straightforwardly extends to a scalar multiplication by $\R$ on the group of virtual polytopes, thus turning $\Pm_\Sigma$ into a real vector space.
Moreover, note that this also shows that $\Pm_\Sigma^+$ is a cone in $\Pm_\Sigma$, and as it generates the ambient vector space it is full-dimensional.

For us the following alternative description of this vector space will be useful.
Recall that one associates to a non-empty compact convex set $A \subseteq M_\R$ its \emph{support function}:
\[
  h_A \colon N_\R \to \R; \quad x \mapsto h_A(x) \coloneqq \sup \rleft\{ \langle a, x \rangle \colon a \in A \rright\}
\]
In other words, given an exterior normal vector $x\neq0$, the position of the corresponding supporting hyperplane is determined by its support function $h_A$, namely $A \subseteq \{ y \in M_\R \colon \langle y, x \rangle \le h_A(u) \}$ with the hyperplane $\{ y \in M_\R \colon \langle y, x \rangle = h_A(u) \}$ intersecting $A$ in a non-empty face.
The crucial observation is that $h_A$ uniquely describes $A$.
Indeed, one can show that $A = \{ x \in M_\R \colon \langle x, u\rangle \leq h_A(u)\; \text{for all} \; u \in N_\R \}$.

Using the language of support functions $\Pm_\Sigma^+$ and $\Pm_\Sigma$ can be reformulated as follows:
\begin{align*}
  \Pm_\Sigma^+ &= \rleft\{ h \colon N_\R \to \R \colon \text{convex, piecewise linear function on} \; \Sigma  \rright\}\text{,}\\
  \Pm_\Sigma &= \rleft\{ h \colon N_\R \to \R \colon \text{piecewise linear function on} \; \Sigma \rright\} \text{,}
\end{align*}
where a function $h \colon N_\R \to \R$ is piecewise linear on $\Sigma$ if $h_\sigma \coloneqq h|_\sigma$ can be extended to a linear function on $\lspan\{ \sigma \}$ for any $\sigma \in \Sigma$.
In addition, such a function is \emph{convex} if $h(u) \ge h_\sigma(u)$ for any $\sigma \in \Sigma$ and $u \in \lspan\{\sigma\}$.

With the above observation on line bundles we have that $\Pm_\Sigma/M_\R \simeq H^2(X_\Sigma, \R)$ where $M_\R$ is interpreted as a linear subspace of $\Pm_\Sigma$, namely the subspace of linear functions.
Indeed, any $m \in M_\R$ can be expressed as a difference $(P+m)-P$ for any polytope $P \in \Pm_\Sigma^+$.
Note that the volume yields a well-defined polynomial function on $\Pm_\Sigma/M_\R$.
The following theorem is implicit in \cite{KP} and forms a key idea to get a description of the cohomology ring $H^*(X_\Sigma, \R)$.
The following version is taken from \cite[Theorem 1.1]{KavehVolume}.

\begin{theorem}
  \label{PKh}
  Suppose $A = \bigoplus_{i=0}^n A_i$ is a graded finite dimensional commutative algebra over a field $\K$ of characteristic $0$ such that $A$ is generated (as an algebra) by the elements $A_1$ of degree one, $A_0 \simeq A_n \simeq \K$, and the bilinear map $A_i \times A_{n-i} \to A_n$ is non-degenerate for any $i = 0, \ldots, n$ (Poincar\'{e} duality).
  Then 
  \[
    A\simeq \K[t_1, \ldots, t_r]/\{p(t_1, \ldots, t_r) \in \K[t_1, \ldots, t_r] \colon p(\tfrac{\partial}{\partial x_1}, \ldots, \tfrac{\partial}{\partial x_r}) f(x_1, \ldots, x_r) = 0 \}
  \]
  where we identify $A_1$ with $\K^r$ via a basis $v_1, \ldots, v_r$ and define $f \colon A_1 \simeq \K^r \to \K$ as the polynomial given by $f(x_1,\ldots, x_r) = (x_1v_1 + \ldots + x_rv_r)^n \in A_n \simeq k$.
\end{theorem}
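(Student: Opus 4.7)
The plan is to construct the isomorphism via the tautological surjection. First I would define the graded algebra homomorphism $\pi \colon \K[t_1,\ldots,t_r] \to A$ by $t_i \mapsto v_i$. Since $A$ is generated as a $\K$-algebra by $A_1$ and $A_0 \simeq \K$, the map $\pi$ is surjective, so $A \simeq \K[t_1,\ldots,t_r]/\ker(\pi)$. The task then reduces to identifying $\ker(\pi)$ with the apolarity ideal $\Ann(f) \coloneqq \{ p : p(\partial/\partial x_1, \ldots, \partial/\partial x_r) f = 0 \}$. Both ideals are homogeneous---$\ker(\pi)$ because $\pi$ preserves the grading (with $\deg t_i = 1$), and $\Ann(f)$ because $f$ is homogeneous of degree $n$---so I would argue degree by degree.

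Fix a homogeneous $p$ of degree $k \le n$. Expanding by the multinomial theorem gives $f(x) = \sum_{|a|=n}\binom{n}{a}\, [v^a]\, x^a$, where $[v^a] \in \K$ denotes the image of $v_1^{a_1}\cdots v_r^{a_r} \in A_n$ under the fixed isomorphism $A_n \simeq \K$. A term-by-term differentiation, followed by the substitution $\beta = a - \alpha$, yields the identity
\[
p(\partial_x) f(x) \;=\; n! \sum_{|\beta|=n-k} \frac{x^\beta}{\beta!} \cdot [\pi(p) \cdot v^\beta],
\]
so that $p(\partial_x) f \equiv 0$ as a polynomial in $x$ if and only if $\pi(p)\cdot v^\beta = 0$ in $A_n$ for every multi-index $\beta$ with $|\beta| = n-k$. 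At this point I would invoke the two structural hypotheses: because $A$ is generated by $A_1$, the monomials $\{v^\beta : |\beta| = n-k\}$ span $A_{n-k}$; and by Poincar\'e duality the pairing $A_k \times A_{n-k} \to A_n \simeq \K$ is non-degenerate. Together, these say that $\pi(p) \cdot v^\beta = 0$ for all such $\beta$ is equivalent to $\pi(p) = 0$ in $A_k$, which gives $\ker(\pi) = \Ann(f)$ as desired.

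The main technical point is establishing the displayed differentiation identity; it is essentially a careful multinomial bookkeeping exercise, but has to be set up so that the product $\pi(p) \cdot v^\beta \in A_n \simeq \K$ appears cleanly as the $x^\beta$-coefficient of $p(\partial_x) f$. The conceptual core of the argument is then the final step: it is precisely the combination of ``generated in degree one'' with Poincar\'e duality that converts the analytic annihilation condition $p(\partial_x) f = 0$ into the purely algebraic condition $\pi(p) = 0$ in $A$.
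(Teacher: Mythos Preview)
Your argument is correct, modulo the trivial omission of the case $k>n$ (there $\pi(p)\in A_k=0$ and $p(\partial_x)f=0$ hold automatically since $\deg f=n$). The paper does not prove this statement where it is first stated; instead it later derives it (as Theorem~\ref{thm-comm-alg}) from the general machinery of Section~\ref{commalg}. In that framework one first invokes Theorem~\ref{thm-nsdquatient} to identify $\ker(\pi)$ with the radical $I(L_\ell)$ of the Frobenius form attached to the top-degree functional $\ell=\phi\circ\pi$, and then shows $I(L_\ell)=\Ann(f)$ via the polarization relation $\ell(d)=\tfrac{1}{n!}\,df$ for $d\in\Diff^n(V)$: if $df\neq 0$ then some $d'$ of complementary degree gives $d'df\neq 0$, hence $\ell(d'd)\neq 0$, hence $d\notin I(L_\ell)$. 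Your explicit multinomial computation bypasses this entirely and makes the two hypotheses (generation in degree one, non-degenerate pairing) appear transparently at the very last step; it is the more elementary and self-contained route. The paper's detour, on the other hand, buys a classification of \emph{all} $n$-self-dual quotients of a given graded-commutative algebra, which it genuinely needs later to handle toric bundles over bases whose cohomology is not generated in degree~$2$.
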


\begin{remark}
  \label{rem:diff-ops}
  Instead of identifying $A_1$ with $\K^r$, the previous theorem accepts a basis-free formulation in terms of $\Diff(A_1)$, the ring of differential operators with constant coefficients:
  $A \simeq \Diff(A_1) / \{ p \in \Diff(A_1) \colon p (f) = 0\}$.

  In Section~\ref{commalg}, we study algebras with Poincar\'e duality which are not necessarily generated in degree $1$.
  In particular, we prove Theorem~\ref{thm-nsdquatient} which is a generalization of Theorem~\ref{PKh}.
  Then in Theorem~\ref{thm-comm-alg}, we show that Theorem~\ref{PKh} is indeed a corollary of Theorem~\ref{thm-nsdquatient}.
\end{remark}

Under the identification $\Pm_\Sigma/M_\R \simeq H^2(X_\Sigma, \R)$ together with \BKK theorem, we get that the function $H^2(X_\Sigma,\R) \to H^{2n}(X_\Sigma, \R) \simeq \R; x \mapsto x^n$ corresponds to the volume polynomial on $\Pm_\Sigma/M$, and thus we obtain (using the formulation from Remark~\ref{rem:diff-ops}):
\[
  H^*(X_\Sigma, \R) \simeq \Diff(\Pm_\Sigma /M_\R ) / \Ann(\Vol) \simeq \Diff(\Pm_\Sigma) / \Ann(\Vol)
\]
where $\Ann(\Vol)$ denotes the ideal of differential operators which annihilate the volume polynomial $\Vol \colon \Pm_\Sigma \to \R$.

%%%%%%%%%%%%%%%%%%%%%%%%%%%%%%%%%%%%%%%%%%%%%%%%%%%%%%%%%%%%%%%%%

\section{Toric bundles and horospherical varieties}
\label{sec:general}

In this section, we collect several facts on toric bundles which will be used below.
We also introduce horospherical varieties which yield an important class of toric bundles.

%%%%%%%%%%%%%%%%%%%%%%%%%%%%%%%%%%%%%%%%%%%%%%%%%%%%%%%%%%%%%%%% 

\subsection{Toric bundles}
\label{sec:toric_bundles}

Let $G$ be a topological group and let $p \colon E \to B$ be a \emph{$G$-principal bundle} over a topological space $B$, i.e., $p \colon E \to B$ is a fiber bundle with structure group $G$ equipped with a $G$-atlas such that the action of $G$ on the fibers $p^{-1}(b)$ for any $b \in B$ is free and transitive.
For convenience, we will identify the fibers of $p$ with $G$, so that we obtain a (right) action of $G$ on $E$ which preserves the fibers and is transitive and free on them.
To any $G$-principal bundle $p \colon E \to B$ and any topological space $X$ equipped with a continuous action by $G$, one associates a fiber bundle by introducing a (right) action on the product $E \times X$:
\[
  (e, x) \cdot g \coloneqq (e\cdot g, g^{-1}\cdot x) \text{.}
\]
The \emph{associated fiber bundle} is given as the quotient $E \times_G X \coloneqq (E \times X) / G$.
It is a fiber bundle with fiber $X$.
If $G = T$ is an algebraic torus, then a $T$--principal bundle is also called a \emph{torus bundle}.

Crucial to the understanding of the cohomology of fiber bundles is the following theorem (see, for instance, \cite[Theorem 5.11]{BottTu} or \cite[Theorem 4D.1]{Hatcher}):
\begin{theorem}[Leray--Hirsch]
  \label{thm:Leray-Hirsch}
  Let $E$ be a fibre bundle with fibre $F$ over a compact manifold $B$.
  If there are global cohomology classes $u_1,\ldots , u_r$ on $E$ whose restrictions $i^*(u_i)$ form a basis for the cohomology of each fibre $F$ (where $i \colon F \to E$ is the inclusion), then we have an isomorphism of vector spaces:
  \[
    H^*(B, \R)\otimes H^*(F, \R) \to H^*(E, \R); \quad \sum_{i,j} b_i \otimes i^*(u_j) \mapsto \sum_{i,j} p^*(b_i) \cdot u_j\text{.}
  \]
\end{theorem}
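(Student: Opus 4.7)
The plan is to prove this by induction on the size of a finite good cover of $B$, using Mayer--Vietoris sequences and the five lemma. Since $B$ is a compact manifold, it admits a finite cover $B = U_1 \cup \cdots \cup U_n$ by contractible open sets whose finite intersections are also contractible, each trivialising the bundle $p \colon E \to B$. Define the Leray--Hirsch map
\[
  \phi \colon H^*(B, \R) \otimes H^*(F, \R) \to H^*(E, \R), \quad b \otimes i^*(u_j) \mapsto p^*(b) \cdot u_j,
\]
which is well-defined because the restrictions $i^*(u_j)$ form a basis of $H^*(F, \R)$. A more conceptual alternative would be to invoke the Serre spectral sequence: the existence of global classes $u_j$ lifting a basis of $H^*(F, \R)$ trivialises the local coefficient system $H^q(F, \R)$ on $B$ and, since the $u_j$ are permanent cycles, forces all higher differentials to vanish by multiplicativity; working over $\R$ there is no extension problem, yielding $\phi$ as the isomorphism from the $E_2$-page. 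I will however develop the Mayer--Vietoris approach.

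For the base case, on a contractible trivialising open $U \subseteq B$ one has $p^{-1}(U) \simeq F$ and $H^*(U, \R) \cong \R$ concentrated in degree zero. Consequently $\phi|_U \colon \R \otimes H^*(F, \R) \to H^*(p^{-1}(U), \R)$ sends $1 \otimes i^*(u_j)$ to $u_j|_{p^{-1}(U)}$, which further restricts along the fibre inclusion to $i^*(u_j)$; by hypothesis this is a basis of $H^*(F, \R)$, so $\phi|_U$ is an isomorphism.

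For the inductive step, decompose $B = U \cup V$ with $U = U_1 \cup \cdots \cup U_{n-1}$ and $V = U_n$, so that each of $U$, $V$, and $U \cap V$ is covered by fewer than $n$ trivialising pieces and $\phi$ is an isomorphism on each by induction. Tensor the Mayer--Vietoris sequence of $B$ with the finite-dimensional (hence flat) $\R$-vector space $H^*(F, \R)$ to obtain a long exact sequence, and map it via $\phi$ into the Mayer--Vietoris sequence of $E = p^{-1}(U) \cup p^{-1}(V)$. Naturality of $p^*$ and of the cup product with respect to open inclusions shows that the squares commute except possibly at the connecting homomorphism, and the five lemma then delivers the result on $B$.

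The main obstacle is precisely this compatibility of $\phi$ with the Mayer--Vietoris connecting homomorphism $\delta$. At the cochain level it reduces to a Leibniz-type identity: cup product with a cocycle representing $u_j$ must commute with $\delta$ up to sign, which works because $u_j$ is defined globally on $E$ and therefore admits a single cochain representative that does not contribute a coboundary term when one splits a global cochain on $B$ as the difference of two local ones and pulls back via $p^*$. Were the $u_j$ only locally defined on each $p^{-1}(U_i)$, this check would genuinely fail, which clarifies why the hypothesis insists on global classes on $E$.
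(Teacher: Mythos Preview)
The paper does not prove this statement; it merely cites it from \cite[Theorem~5.11]{BottTu} and \cite[Theorem~4D.1]{Hatcher}. Your argument is the standard good-cover/Mayer--Vietoris proof from Bott--Tu, so you have reproduced exactly the proof behind one of the paper's references, and the sketch is correct (including the key point that the commutativity at the connecting map uses that the $u_j$ are \emph{global} classes on $E$).
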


\begin{corollary}
  \label{cor:cohmodule}
  If $T$ is an algebraic torus, $p \colon E \to B$ a $T$-toric bundle as in Theorem~\ref{thm:Leray-Hirsch}, and $X$ a smooth projective $T$--toric variety, then as a group the cohomology of $E_X = E \times_T X$ is given by
  \[
    H^*(E_X, \R) \simeq H^*(B, \R) \otimes H^*(X, \R) \text{.}
  \]
\end{corollary}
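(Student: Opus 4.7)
The plan is to reduce the statement to an application of the Leray--Hirsch theorem (Theorem~\ref{thm:Leray-Hirsch}). Since $p \colon E_X \to B$ is a fibre bundle with fibre $X$, what we need to produce are global cohomology classes $u_1,\ldots,u_r \in H^*(E_X,\R)$ whose restrictions to a fibre form a basis of $H^*(X,\R)$.

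First, I would recall that for a smooth projective toric variety $X = X_\Sigma$, the cohomology ring $H^*(X,\R)$ is generated in degree $2$ by the classes of the $T$--invariant prime divisors $D_{\rho_1},\ldots,D_{\rho_s}$ corresponding to the rays of $\Sigma$, as is visible from the Stanley--Reisner presentation recalled in Section~\ref{sec:intro}. In particular, a basis of $H^*(X,\R)$ can be chosen consisting of monomials in the classes $[D_{\rho_i}]$.

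Next, I would lift each $T$--invariant divisor $D_{\rho_i} \subseteq X$ to a divisor on $E_X$ by means of the associated bundle construction: since $D_{\rho_i}$ is $T$--stable, the subset $E \times_T D_{\rho_i} \subseteq E \times_T X = E_X$ is a well-defined subbundle, and defines a cohomology class $u_i \in H^2(E_X,\R)$. For any $b \in B$, the inclusion $i \colon X \simeq p^{-1}(b) \hookrightarrow E_X$ pulls $u_i$ back to $[D_{\rho_i}] \in H^2(X,\R)$. Taking the appropriate monomials in the $u_i$'s thus yields global classes in $H^*(E_X,\R)$ whose fibre restrictions form a basis of $H^*(X,\R)$.

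Finally, I would invoke Theorem~\ref{thm:Leray-Hirsch} with these lifted classes to conclude the isomorphism of $\R$--vector spaces (or equivalently, of $H^*(B,\R)$--modules)
\[
  H^*(B,\R) \otimes_\R H^*(X,\R) \xrightarrow{\;\sim\;} H^*(E_X,\R)\text{.}
\]
The only nontrivial point is the existence of the lifts $u_i$, which is essentially automatic from the toric/equivariant nature of the setup; the rest is a direct application of Leray--Hirsch. Note that this only gives an isomorphism of groups (or of $H^*(B,\R)$--modules via the projection $p$); the ring structure on $H^*(E_X,\R)$ is not recovered here and is precisely the subject of the later sections of the paper.
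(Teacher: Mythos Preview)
Your proposal is correct and follows essentially the same approach as the paper: both lift $T$-invariant subvarieties of $X$ to submanifolds of $E_X$ via the associated bundle construction and then apply Leray--Hirsch. The only cosmetic difference is that the paper lifts all orbit closures $\overline{O}$ directly (whose Poincar\'e duals already form an additive basis of $H^*(X,\R)$), whereas you lift only the divisors and then take monomials; since in a smooth projective toric variety the class of the orbit closure corresponding to a cone equals the product of the divisor classes of its rays, the two choices of global classes amount to the same thing.
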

\begin{proof}
  As a group, the cohomology of smooth projective toric varieties is generated by classes Poincar\'{e} dual to orbit closures.
  For any orbit closure $\overline{O}$ of $X$, let $E_{\overline{O}} = E \times_T \overline{O}$ be a submanifold of $E_X$.
  Note that $E_X$ is compact, and thus Poincar\'{e} duality applies.
  The cohomology classes $u_1, \ldots, u_r$ Poincar\'{e} dual to these submanifolds satisfy the condition of Theorem~\ref{thm:Leray-Hirsch}.
  The statement follows.
\end{proof}

Corollary~\ref{cor:cohmodule} yields a description of the cohomology \emph{group} of $E_X$.
Crucial for this description is the map which associates to any orbit closure $\overline{O}$ of $X$ the Poincar\'{e} dual of the corresponding $T$-invariant submanifold of $E_X$.
By restricting this map to divisors and extending using linearity, we obtain
\[
  \rho\colon \Pm_\Sigma \to H^2(E_X, \R) \qquad \text{(where $\Sigma$ is the fan of $X = X_\Sigma$)}
\]
which plays an important role in our description of the cohomology ring of $E_X$.
We provide more details for $\rho$.
Recall that $\rho_1,\ldots,\rho_r$ denote the rays of $\Sigma$ with integer generators $e_1,\ldots,e_r$.
Let $D_1,\ldots,D_r$ be the corresponding divisors in $X$.
We also write $D_i$ for the submanifold $E \times_T D_i$ of $E_X$.
Then for $\Delta \in \Pm_\Sigma$, we have:
\[
  \rho(\Delta) = \sum_{i=1}^r h_\Delta(e_i) [D_i] \in  H^2(E_X, \R),
\]
where $[D_i]$ is the class Poincar\'{e} dual to $D_i\subseteq E_X$ and $h_\Delta \colon N_\R \to \R$ is the support function of $\Delta$.

The following observation about $\rho(\cdot)$ will be crucial for our approach.
Any character $\lambda \in M$ defines a one--dimensional representation $\C_\lambda$ of $T$, namely $t \cdot z = \lambda(t) z$ for $t \in T$, and $z \in \C_\lambda$.
If $\cL_\lambda$ denotes the associated complex line bundle on $B$, i.e. $\cL_\lambda\simeq E\times_T\C_\lambda$, then $\cL_{\lambda+\mu}=\cL_\lambda\otimes\cL_\mu$, and thus we obtain a group homomorphism:
\[
  c\colon M\to H^2(B,\Z), \quad \lambda \mapsto c_1(\cL_\lambda),
\]
where $c_1(\cL_\lambda)$ is the first Chern class.
By linearity, we extend the homomorphism to a map of vector spaces:
\[
  c\colon  M_\R \to H^2(B,\R).
\]
\begin{proposition}\label{cherneq}
  Let $\Sigma$ be a smooth complete fan, and $p\colon E_X \to B$ as before.
  Then for any character $\lambda \in M$:
  \[
    p^* c(\lambda)= \rho(\lambda),
  \]
  where on the right hand side of the equality $\lambda$ is regarded as a virtual polytope.
\end{proposition}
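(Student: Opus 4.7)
The plan is to identify an explicit meromorphic (or smooth) section of $p^*\cL_\lambda$ whose divisor in $E_X$ realizes the class $\rho(\lambda)$. First I would unpack what $\rho(\lambda)$ means. Regarded as a virtual polytope, the character $\lambda\in M_\R$ corresponds to the linear support function $h_\lambda(u)=\langle\lambda,u\rangle$ on $N_\R$ (so $\lambda=(P+\lambda)-P$ for any $P\in\Pm_\Sigma^+$). Applying the definition of $\rho$ recalled just before the proposition,
\[
\rho(\lambda)=\sum_{i=1}^{r}h_\lambda(e_i)\,[D_i]=\sum_{i=1}^{r}\langle\lambda,e_i\rangle\,[D_i]\in H^2(E_X,\R).
\]
So the task reduces to showing that $p^*c_1(\cL_\lambda)=c_1(p^*\cL_\lambda)$ equals this sum of Poincar\'e duals.

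Next I would construct the relevant section. The character $\lambda\in M$ gives a regular function $\chi^\lambda\colon T\to\C^*$ and, via the $T$-action, extends to a rational function on $X_\Sigma$ satisfying $\chi^\lambda(t\cdot x)=\lambda(t)\,\chi^\lambda(x)$. A section of $p^*\cL_\lambda$ on $E_X=(E\times X_\Sigma)/T$ is the same as a function $f\colon E\times X_\Sigma\to\C$ with $f(e\cdot t,t^{-1}\cdot x)=\lambda(t)^{-1}f(e,x)$, and the function $(e,x)\mapsto\chi^\lambda(x)$ satisfies precisely this equivariance. Hence it defines a rational section $s_\lambda$ of $p^*\cL_\lambda$ over $E_X$ (and in the smooth setting one chooses a smooth representative with the same zero/pole locus up to a homotopy).

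Finally I would compute the divisor of $s_\lambda$. Since $s_\lambda$ depends only on the fiber coordinate, its divisor is the image under the quotient $E\times X_\Sigma\to E_X$ of $E\times\operatorname{div}(\chi^\lambda)$. The classical toric computation on $X_\Sigma$ yields $\operatorname{div}(\chi^\lambda)=\sum_{i=1}^{r}\langle\lambda,e_i\rangle\,D_i$, and each $E\times_T D_i$ is exactly the submanifold of $E_X$ denoted $D_i$ in the excerpt. Therefore
\[
p^*c(\lambda)\;=\;c_1(p^*\cL_\lambda)\;=\;[\operatorname{div}(s_\lambda)]\;=\;\sum_{i=1}^{r}\langle\lambda,e_i\rangle\,[D_i]\;=\;\rho(\lambda).
\]

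The main thing to verify carefully, and the only real obstacle, is the equivariance check that promotes $\chi^\lambda$ to a well-defined section of $p^*\cL_\lambda$ (as opposed to $p^*\cL_{-\lambda}$ or a twist by a different line bundle) and the fact that taking the Poincar\'e dual of the zero/pole divisor of a section commutes with pullback along $p$. Both are standard but depend on sign conventions for the $T$-action on $\C_\lambda$ and on the identification of $\cL_\lambda$ with $E\times_T\C_\lambda$; once these are pinned down, the argument is immediate. The same reasoning works in the purely smooth category by replacing the rational section with a smooth section having the same zero locus, since $c_1$ is represented by the Poincar\'e dual of the zero locus of any generic smooth section.
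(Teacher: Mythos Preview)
Your argument is correct and follows essentially the same route as the paper: both identify $p^*\cL_\lambda$ with the associated bundle $E\times_T\cL_\lambda$, invoke the toric formula $\operatorname{div}(\chi^\lambda)=\sum_i\langle\lambda,e_i\rangle D_i$, and read off $c_1$ as the Poincar\'e dual of the resulting divisor in $E_X$. The only cosmetic difference is that you build a single rational section of $p^*\cL_\lambda$ from $\chi^\lambda$, whereas the paper treats each $D_i$ separately via a regular section of the linearized bundle $\cL_i$ and then uses linearity of $c_1$; this avoids meromorphic sections but is otherwise the same computation.
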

\begin{proof}
  A character $\lambda$ of $T$ defines a $T$-invariant divisor $\mathrm{div}(\lambda)$ on $X$ which can be expressed as $\mathrm{div}(\lambda) = \sum_{i=1}^r k_i D_i$ where the $D_i \subseteq X$ are distinct irreducible divisors and $k_i \in \Z$.
  Every divisor $D_i$ comes with a (naturally) linearized line bundle $\cL_i$ and a global regular section $s_i$ such that $\mathrm{div}(s_i) = D_i$.
  The sections $s_i$ yield generic global (smooth) sections of the associated line bundle $E \times_T \cL_i \to E \times_T X$ with degeneracy locus $E \times_T D_i$.
  By \cite[Gauss--Bonnet Formula II, p.~413]{GriffithsHarris}, $c_1(E\times_T \cL_i)$ is Poincar\'{e} dual to $E\times_T D_i$.
  Using linearity of $c_1$, we get $c_1(E\times_T \cL_\lambda) = \rho(\lambda)$.
  It is straightforward to verify that $E \times_T \cL_\lambda=p^* \cL_\lambda$, and so
  \[
    p^*c(\lambda)=c_1(p^*\cL_\lambda) = c_1(E \times_T \cL_\lambda) = \rho(\lambda) \text{.} \qedhere
  \]
\end{proof}

The map of lattices $c\colon M\to H^2(B,\Z)$ is an invariant that uniquely describes torus bundles in the topological category.
Indeed, a torus bundle $E$ can be recovered from $ c\colon M \to H^2(B,\Z)$ as follows.

Choose a basis $e_1, \ldots, e_k$ of $M$.
Since (topological) complex line bundles are classified by their Chern classes, there exist bundles $\cL_i$ on $B$ uniquely determined by $c_1(\cL_i) = c(e_i)\in H^2(B,\Z)$ for $i=1, \ldots, k$.
The rest of the construction follows a general recipe for how to get a torus bundles $E\to B$ from a family of (topological) complex line bundles $\cL_1, \ldots, \cL_k$ on $B$:

Consider the vector bundle $\cL_1\oplus\ldots\oplus\cL_k$ of rank $k$ over $B$.
Note for any subset $\{i_1, \ldots, i_s\} \subseteq \{ 1, \ldots, k\}$ the vector bundle $\cL_{i_1}\oplus\ldots\oplus\cL_{i_s}$ of rank $s<k$ naturally embeds into $\cL_1\oplus\ldots\oplus\cL_k$.
Let $E\subseteq \cL_1\oplus\ldots\oplus\cL_k$ be the complement of the coordinate vector subbundles $\cL_{i_1}\oplus\ldots\oplus\cL_{i_s}$.
The algebraic torus $(\C^*)^k$ acts freely on $E$ via coordinate-wise scaling, and $E\to B$ is a (topological) principal  torus bundle.
We arrive at the following result.

\begin{proposition}
  \label{prop:toricbun}
  Let $B$ be a closed oriented manifold and $T$ be an algebraic torus with character lattice $M(T)$.
  Then (topological) $T$-principal  bundles over $B$ are in bijection with homomorphisms $c\colon M(T)\to H^2(B,\Z)$.
\end{proposition}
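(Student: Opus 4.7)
The plan is to combine the standard classification of topological principal bundles via classifying spaces with the explicit construction already sketched just before the statement, which will furnish the inverse map.

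The forward direction sends a bundle $E \to B$ to the homomorphism $c_E \colon M \to H^2(B,\Z)$ defined by $\lambda \mapsto c_1(\cL_\lambda)$, where $\cL_\lambda = E \times_T \C_\lambda$ as in Section~\ref{sec:toric_bundles}; additivity of $c_E$ follows from $\cL_{\lambda+\mu} \simeq \cL_\lambda \otimes \cL_\mu$ together with multiplicativity of $c_1$ under tensor products. To prove bijectivity I would invoke the usual classification: for the closed manifold $B$ (paracompact Hausdorff), isomorphism classes of topological principal $T$-bundles correspond to $[B, BT]$. Since $T \simeq (\C^*)^k$ with $k = \rk M$ is homotopy equivalent to the compact torus $(S^1)^k$, the classifying space satisfies $BT \simeq (\C\p^\infty)^k \simeq K(\Z,2)^k$, and therefore
\[
[B, BT] \simeq H^2(B, \Z)^k \simeq \Hom(M, H^2(B, \Z)) \text{.}
\]
The last identification becomes canonical once one notes that $H^2(BT, \Z) \simeq M$ via first Chern classes of the line bundles associated to characters; under this chain, the bundle $E$ classified by $f \colon B \to BT$ is sent to $\lambda \mapsto f^* c_1(\cL_\lambda^{\mathrm{univ}}) = c_1(\cL_\lambda) = c_E(\lambda)$, so the abstract bijection really is $E \mapsto c_E$.

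The remaining step is to check that the explicit construction from the excerpt provides the inverse in a tangible form. Starting from $c \colon M \to H^2(B, \Z)$ and a basis $e_1, \ldots, e_k$ of $M$, realize each $c(e_i)$ as $c_1(\cL_i)$ via the classification of topological complex line bundles by $H^2(-, \Z)$, and form $E$ as the complement of the coordinate subbundles in $\cL_1 \oplus \ldots \oplus \cL_k$. A direct computation shows $E \times_T \C_{e_i} \simeq \cL_i$ as complex line bundles, so $c_E$ and $c$ agree on the basis $\{e_i\}$ and hence everywhere; independence of the basis choice in the construction then drops out of the injectivity of the forward map.

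The main obstacle is purely bookkeeping: one must carefully track conventions, especially the character-vs-cocharacter sign conventions entering the balanced product $E \times_T \C_\lambda$ and the precise form of the $T$-action on $\cL_1 \oplus \ldots \oplus \cL_k$, to be sure that the classifying-space bijection and the explicit coordinate construction really agree. No individual step is technically hard, but the compatibility check is where mistakes are easiest to make.
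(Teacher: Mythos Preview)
Your proof is correct and follows the same path as the paper: the paper's argument is precisely the explicit construction sketched immediately before the proposition (choose a basis of $M$, use that topological line bundles are classified by $c_1$, and take the complement of coordinate subbundles in $\cL_1\oplus\cdots\oplus\cL_k$), which you reproduce and then supplement with the classifying-space identification $BT\simeq K(\Z,2)^k$ to make bijectivity rigorous. The paper leaves that last verification implicit, so your version is in fact slightly more complete.
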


If the base manifold $B$ is an algebraic variety, there is an analogous description of algebraic $T$-principal  bundles over $B$. In this case one has to consider the homomorphism $\widetilde c: M(T) \to \Pic(B)$ given by 
\[
    \widetilde c \colon M(T) \to \Pic(B); \; \lambda \mapsto \cL_\lambda \in \Pic(B) \text{.}
\]

We conclude this section with the following description of the cohomology ring of toric bundles by Sankaran and Uma.
We note that this description is true in greater generally as stated here and we refer the reader to \cite[Theorem 1.2]{US03} for the complete statement.
The following version will be sufficient for what follows. 
\begin{theorem}
  \label{thm:US}
  We continue to use the notation from above.
  If $X_\Sigma$ is smooth and projective, the cohomology ring $H^*(E_\Sigma,\R)$ is isomorphic (as an $H^*(B, \R)$-algebra) to the quotient of $H^*(B,\R)[x_1, \ldots, x_r]$ by
  \[
     \rleft( \left\langle x_{j_1} \cdots x_{j_k} \colon \rho_{j_1}, \ldots, \rho_{j_k} \; \text{do not span a cone of} \; \Sigma \right\rangle + \left\langle c\left( \lambda \right) - \sum_{i=1}^n \langle e_i, \lambda \rangle x_i \colon \lambda \in M \right\rangle \rright) \text{.}
  \]
\end{theorem}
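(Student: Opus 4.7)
The plan is to define a natural $H^*(B,\R)$-algebra homomorphism from $H^*(B,\R)[x_1,\ldots,x_r]$ to $H^*(E_\Sigma,\R)$, verify that it kills both families of generators of the ideal in the statement, and then promote the resulting surjection onto the quotient $R$ to an isomorphism by comparing $H^*(B,\R)$-module ranks via the Leray-Hirsch theorem.

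I would define the homomorphism $\Phi$ by $x_i \mapsto [D_i] \in H^2(E_\Sigma,\R)$, where $[D_i]$ is the class Poincaré dual to the submanifold $E \times_T D_i$ of $E_\Sigma$, and take $p^*\colon H^*(B,\R) \to H^*(E_\Sigma,\R)$ as the $H^*(B,\R)$-algebra structure on the target. The Stanley-Reisner relations lie in $\ker\Phi$ for the geometric reason that, if $\rho_{j_1},\ldots,\rho_{j_k}$ fail to span a cone of $\Sigma$, then $D_{j_1}\cap\cdots\cap D_{j_k} = \emptyset$ in $X_\Sigma$, which lifts to $(E\times_T D_{j_1})\cap\cdots\cap(E\times_T D_{j_k}) = \emptyset$ in $E_\Sigma$, so the cup product of the Poincaré duals vanishes. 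For the linear relations, viewing $\lambda \in M$ as a one-point virtual polytope gives $h_\lambda(e_i) = \langle e_i, \lambda\rangle$, so that $\rho(\lambda) = \sum_i \langle e_i, \lambda\rangle [D_i]$; together with Proposition~\ref{cherneq}, this yields $\Phi\bigl(c(\lambda) - \sum_i \langle e_i, \lambda\rangle x_i\bigr) = p^*c(\lambda) - \rho(\lambda) = 0$. Thus $\Phi$ descends to an $H^*(B,\R)$-algebra homomorphism $\bar\Phi\colon R \to H^*(E_\Sigma,\R)$.

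By Corollary~\ref{cor:cohmodule}, $H^*(E_\Sigma,\R)$ is a free $H^*(B,\R)$-module of rank $N \coloneqq \dim_\R H^*(X_\Sigma,\R)$ with basis any lift of a basis of $H^*(X_\Sigma,\R)$. Since the toric divisor classes generate $H^*(X_\Sigma,\R)$ as a ring (by the Stanley-Reisner description), this basis may be chosen to consist of monomials $M_1,\ldots,M_N$ in the variables $x_i$, and the corresponding global classes $\Phi(M_1),\ldots,\Phi(M_N)$ then form an $H^*(B,\R)$-basis of $H^*(E_\Sigma,\R)$. In particular $\bar\Phi$ is surjective. To finish, I would show that the same monomials $M_1,\ldots,M_N$ generate $R$ as an $H^*(B,\R)$-module. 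This would yield a chain of $H^*(B,\R)$-module surjections $H^*(B,\R)^N \twoheadrightarrow R \twoheadrightarrow H^*(E_\Sigma,\R)$, and since a surjection between free modules of the same finite rank over a commutative ring is automatically an isomorphism, both maps (in particular $\bar\Phi$) must be isomorphisms.

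To see that the $M_\alpha$ generate $R$, I would adapt the reduction algorithm from Section~\ref{sec:intersection_SR} by induction on the degree of a monomial $m$ in the $x_i$. Its image in $H^*(X_\Sigma,\R) = \R[x_1,\ldots,x_r]/(I+J)$ decomposes as $\sum c_\alpha M_\alpha$, so in $\R[x_1,\ldots,x_r]$ we may write $m - \sum c_\alpha M_\alpha = p + q$ with $p \in J$ and $q \in I$ of the same degree. Modulo the ideal defining $R$, the contribution of $q$ vanishes, while each generator $\sum_i \langle e_i, \lambda\rangle x_i$ of $J$ is replaced by $c(\lambda) \in H^2(B,\R)$; hence $p$ becomes an $H^{>0}(B,\R)$-linear combination of polynomials in the $x_i$ of strictly smaller degree, to which the inductive hypothesis applies. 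The main obstacle I anticipate is precisely this bookkeeping: one must carefully track gradings and check that replacing the linear forms of $J$ by elements of positive degree in $H^*(B,\R)$ strictly reduces the $x$-degree, so that the induction terminates and produces the desired $H^*(B,\R)$-presentation of $R$ by $M_1,\ldots,M_N$.
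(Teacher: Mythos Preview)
Your argument is correct, but it takes a different route from the paper's own proof (given at the end of Section~\ref{sec:cohtoricbundle}). Both establish the surjection $\Phi$ and the inclusion of the two families of relations in $\ker\Phi$, and both finish by a dimension count using Leray--Hirsch. The differences are in how each of those two steps is carried out.

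For the Stanley--Reisner relations, you argue geometrically via transversality of the $E\times_T D_{j}$ (an argument that in fact appears in the paper in the proof of Lemma~\ref{Fder}). The paper instead deduces $I\subseteq\ker\Phi$ from its own machinery: Theorem~\ref{cohbundle} together with the polarisation formula and Lemma~\ref{Ider}. This is deliberate---the point of that section is to show that the generalised \BKK theorem recovers Sankaran--Uma's description---so the paper's proof is less elementary by design. For the injectivity step, you run the reduction algorithm of Section~\ref{sec:intersection_SR} to show that monomials $M_1,\ldots,M_N$ generate $R$ as an $H^*(B,\R)$-module, and then conclude by comparing ranks. The paper instead compares the ideals $J$ and $J_0$ inside the bigraded ring $H^*(B,\R)\otimes SR_\Sigma$, showing $\dim_\R J^i\ge\dim_\R J_0^i$ in each degree by a linear-independence argument that exploits the bigrading. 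Your approach is arguably more hands-on and recovers an explicit $H^*(B,\R)$-module basis; the paper's is a cleaner ideal-theoretic comparison that avoids the inductive bookkeeping. One small remark: since $H^*(B,\R)$ is only graded-commutative, the cleanest way to phrase your final step is simply to count $\R$-dimensions (both $H^*(B,\R)^N$ and $H^*(E_\Sigma,\R)$ have $\R$-dimension $N\cdot\dim_\R H^*(B,\R)$, sandwiching $\dim_\R R$), rather than invoking the statement about surjections of free modules over a commutative ring.
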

Note the similarities with the Stanley-Reisner description of the cohomology ring of toric varieties.
Indeed, the first ideal in Theorem~\ref{thm:US} corresponds to the Stanley-Reisner ideal of the corresponding toric variety.
In particular, the algorithm from Section~\ref{sec:intersection_SR} can be used to compute products in the top degree of $H^*(E_X, \R)$. In the end of Section~\ref{sec:cohtoricbundle} we will give an alternative proof of Theorem~\ref{thm:US} based on our version of \BKK Theorem.

%%%%%%%%%%%%%%%%%%%%%%%%%%%%%%%%%%%%%%%%%%%%%%%%%%%%%%%%%%%%%%%% 

\subsection{Horospherical varieties}
\label{sec:horospherical}

In this section, we recall horospherical varieties which provide an important source of examples of toric bundles.
We conclude with a description of their associated cohomology ring.

Let $G$ be a connected reductive complex algebraic group.
A closed subgroup $H \subseteq G$ is called \emph{horospherical} if it contains a maximal unipotent subgroup $U$ of $G$ and $E \coloneqq G/H$ is called a \emph{horospherical homogeneous space}.
Let $B \subseteq G$ be a Borel subgroup whose unipotent radical is $U$.
It is known that $P \coloneqq N_G(H)$ is a parabolic subgroup containing $B$.
The natural quotient $p \colon E \to G/P$ is a torus bundle with fibre the torus $T = P/H$.
Let $M$ be the character lattice of $T$.
Let $\Sigma \subseteq N_\R \coloneqq \Hom_\Z( M, \R)$ be a fan with corresponding toric variety $X = X_\Sigma$.
The associated toric bundle $p \colon E_X \to G/P$ is a horospherical variety.
In general, an irreducible normal $G$-variety $Y$ together with an open equivariant embedding $G/H \hookrightarrow Y$ is called a \emph{horospherical variety}.
Recall that a morphism of $G$-varieties $\varphi \colon Y \to Y'$ is called \emph{equivariant} if $\varphi(g \cdot y) = g \cdot \varphi(y)$ for any $g \in G$ and $y \in Y$.
We refer to \cite{Knop:SphEmbd, Timashev:HSEE} for further details and references on horospherical varieties.

We conclude this section with a combinatorial description of the cohomology ring $H^*(E_X,\R)$.
Recall from the previous section that any character $\lambda \in M$ gives rise to a line bundle $\cL_\lambda$ on $G/P$, so that we obtain a group homomorphism $c \colon M \to H^2(G/P, \Z)$.
The following statement combinatorially describes the cohomology ring of $E_X$ provided $X_\Sigma$ is smooth and projective.
It is a special case of a more general result.
\begin{theorem}[{\cite[Theorem 1.2]{US03}}]
  \label{thm:cr-hs}
  Suppose $\Sigma$ has rays $\rho_1, \ldots, \rho_r$ with primitive vectors $v_1, \ldots, v_n \in N \coloneqq \Hom_\Z(M, \Z)$ along the edges $\rho_i$.
  If $X_\Sigma$ is smooth and projective, then the cohomology ring $H^*(E_X,\R)$ is isomorphic as an $H^*(G/P, \R)$-algebra to the quotient of $H^*(G/P,\R)[x_1, \ldots, x_r]$ by the sum of ideals
  \[
    \left\langle x_{j_1} \cdots x_{j_k} \colon \rho_{j_1}, \ldots, \rho_{j_k} \; \text{do not span a cone of} \; \Sigma \right\rangle + \left\langle c(\lambda) - \sum_{i=1}^r \langle v_i, \lambda \rangle x_i \colon \lambda \in M \right\rangle \text{.}
  \]
\end{theorem}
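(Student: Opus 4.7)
The plan is to derive Theorem~\ref{thm:cr-hs} as a direct corollary of Theorem~\ref{thm:US}. The first step is to put the horospherical data into the framework of toric bundles discussed in Section~\ref{sec:toric_bundles}. Since $H$ contains a maximal unipotent subgroup $U$ of $G$ and $P = N_G(H)$ is parabolic, the quotient $T = P/H$ is an algebraic torus and the natural map $p \colon E = G/H \to G/P$ is a principal $T$-bundle in the algebraic category, and hence topologically. The base $B = G/P$ is a compact complex manifold (in fact a projective flag variety), which is exactly the setup required for Theorem~\ref{thm:US}. Moreover, the associated toric bundle $E_X = E \times_T X_\Sigma \to G/P$ is precisely the horospherical variety described in Section~\ref{sec:horospherical}. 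Applied to this situation, Theorem~\ref{thm:US} produces the presentation
\[
H^*(E_X,\R) \simeq H^*(G/P,\R)[x_1,\ldots,x_r]/(I_{\mathrm{SR}} + J_c),
\]
where $I_{\mathrm{SR}}$ is the Stanley--Reisner ideal associated to $\Sigma$ and $J_c$ is generated by the elements $c(\lambda) - \sum_i \langle v_i,\lambda \rangle x_i$ for $\lambda \in M$.

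The second step is to check that the character-to-Chern-class map $c \colon M \to H^2(G/P,\Z)$ appearing in Theorem~\ref{thm:US} agrees with the one used in the statement of Theorem~\ref{thm:cr-hs}. By the general construction, $c(\lambda) = c_1(\cL_\lambda)$, where $\cL_\lambda = E \times_T \C_\lambda$. Given $\lambda \in M = X^*(P/H)$, we lift it through the quotient $P \twoheadrightarrow P/H = T$ to obtain a character of $P$ trivial on $H$, and then there is a canonical identification
\[
(G/H) \times_T \C_\lambda \;\simeq\; G \times_P \C_\lambda,
\]
since both describe the same quotient of $G \times \C_\lambda$ by the joint action of $P$. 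Thus $\cL_\lambda$ is the standard $G$-equivariant line bundle on $G/P$ attached to the character $\lambda$, and the two formulations of $c$ coincide.

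Finally, one combines these two observations. Substituting the identification of the character maps into the presentation obtained from Theorem~\ref{thm:US} yields exactly the presentation asserted in Theorem~\ref{thm:cr-hs}. The smoothness and projectivity hypothesis on $X_\Sigma$ enters in the same way as in Theorem~\ref{thm:US} (it ensures the Leray--Hirsch hypothesis applies, cf.\ Corollary~\ref{cor:cohmodule}, and that the Stanley--Reisner presentation of $H^*(X_\Sigma,\R)$ is available fibrewise), and the compactness hypothesis needed for Leray--Hirsch is satisfied because $G/P$ is projective.

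The main obstacle is not conceptual but notational: one has to track carefully that the two different incarnations of $\lambda$ as a character of $T = P/H$ and as a character of $P$ produce the same line bundle on $G/P$, and that the primitive generators $v_i \in N$ used in the pairing match under the identification $N = \Hom(M,\Z)$ in both setups. Once these book-keeping identifications are in place, the result is immediate from Theorem~\ref{thm:US}. An alternative route, which avoids invoking \cite{US03} directly and aligns better with the paper's philosophy, would be to apply the description of $H^*(E_X,\R)$ obtained in Section~\ref{sec:cohtoricbundle} (Theorem~\ref{cohbundle}) together with the independent proof of Theorem~\ref{thm:US} given there via the generalized BKK theorem; this is presumably the approach the authors will take.
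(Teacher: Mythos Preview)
Your proposal is correct and matches the paper's treatment: the paper does not give a separate proof of Theorem~\ref{thm:cr-hs} but simply presents it as the horospherical special case of Theorem~\ref{thm:US}, both cited from \cite[Theorem~1.2]{US03}, with the alternative proof of the latter supplied in Section~\ref{sec:cohtoricbundle}. Your bookkeeping verification that the line bundle $(G/H)\times_T \C_\lambda$ agrees with $G\times_P \C_\lambda$, and hence that the two incarnations of $c$ coincide, is a useful detail that the paper leaves implicit.
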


%%%%%%%%%%%%%%%%%%%%%%%%%%%%%%%%%%%%%%%%%%%%%%%%%%%%%%%%%%%%%%%%%

\section{\BKK theorems for toric bundles}
\label{sec:bkk-tb}

This section provides an overview of the main results of this paper: Theorems~\ref{BKK} and~\ref{BKK1}.
Details and proofs will be given in Section~\ref{BKKproof}, and in Section~\ref{sec:applications} these theorems will be used to describe the cohomology ring of toric bundles.

As before, let $p\colon E\to B$ be a principal torus bundle with respect to a torus $T\simeq (\C^*)^n$ over a compact smooth orientable manifold $B$ of real dimension $k$.
Let $M$ be the character lattice of $T$ and $\Sigma \subseteq N_\R = \Hom_\Z(M,\R)$ be a smooth complete fan which gives rise to a toric variety $X = X_\Sigma$.
Let $E_X$ be the total space of the associated toric bundle.
Note that $E_X$ is a compact smooth orientable manifold of real dimension $k+2n$.
To keep notation simple we denote the projection map of the toric bundle by $p\colon E_X \to B$ as well.

Our main theorems show that a choice of a natural number $i\leq\tfrac{k}{2}$ and $\gamma\in H^{k-2i}(B, \R)$ gives rise to a \mbox{\BKK-type} theorem.
First, we define two functions $I_\gamma$ and $F_\gamma$ on $\Pm_\Sigma$ as follows.

Let $f_\gamma\colon M_\R \to \R$ be given by 
\[
  f_\gamma(x)=c(x)^{i} \cdot \gamma\text{,}
\]
where ``$\cdot$'' denotes the cup product of the cohomology ring $H^*(B,\R)$.
Here we identified the top cohomology group $H^k(B,\R)$ with $\R$ by pairing with the fundamental class of $B$.
Since $c\colon M_\R\to H^2(B,\R)$ is a linear map, $f_\gamma$ is a homogeneous polynomial of degree $i$ on $M_\R$.
This leads to the definition of $I_\gamma$:
\[
  I_\gamma\colon \Pm_\Sigma\to \R; \quad I_\gamma(\Delta) \coloneqq \int_\Delta f_\gamma(x) \diff\mu  \quad \text{for} \; \Delta\in \Pm^+_\Sigma,
\]
where $\mu$ denotes the Lebesgue measure on $M_\R$ normalized with respect to the lattice $M$, i.e. a cube spanned by an affine lattice basis of $M$ has volume $1$.
Note that we gave an explicit formula for $I_\gamma$ on the full-dimensional cone $\Pm^+_\Sigma$.
By Theorem~\ref{ultimatepoly}, $I_\gamma$ extends to a homogeneous polynomial of degree $n+i$ on $\Pm_\Sigma$.

Recall the definition of $\rho\colon \Pm_\Sigma\to H^{2}(E_X, \R)$ from Section~\ref{sec:toric_bundles}.
This leads to the definition of the function $F_\gamma$:
\[
  F_\gamma\colon \Pm_\Sigma\to \R; \quad F_\gamma(\Delta) \coloneqq \rho(\Delta)^{n+i}\cdot p^*(\gamma).
\]
Clearly, $F_\gamma$ is a homogeneous polynomial of degree $n+i$ on $\Pm_\Sigma$.

The main result of this section is the following analog of the \BKK theorem for toric bundles.
Indeed, it expresses certain intersection numbers of cohomology classes as mixed integrals.

\begin{theorem}
  \label{BKK}
  The polynomials $I_\gamma$ and $F_\gamma$ are proportional with coefficient of proportionality given by:
  \[
    (n+i)!\cdot I_\gamma(\Delta)=i!\cdot F_\gamma(\Delta) \qquad \text{for any $\Delta\in \Pm_\Sigma$.}
  \]
  In particular, the polarizations of $I_\gamma$ and $F_\gamma$ are proportional multilinear forms, i.e. for any $\Delta_1,\ldots,\Delta_{n+i}\in \Pm_\Sigma$
  \[
    (n+i)!\cdot I_\gamma(\Delta_1,\ldots,\Delta_{n+i})=i!\cdot F_\gamma(\Delta_1,\ldots,\Delta_{n+i}).
  \]
\end{theorem}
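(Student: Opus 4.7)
The plan is to reduce the claim to the open full--dimensional cone $\Pm^+_\Sigma$ of genuine polytopes, and then obtain the polarised statement as a formal consequence. Both sides are homogeneous polynomials of degree $n+i$ on $\Pm_\Sigma$: for $F_\gamma$ this is immediate from the linearity of $\rho$, and for $I_\gamma$ it is part of Theorem~\ref{ultimatepoly}. Since $\Pm^+_\Sigma$ spans the ambient vector space, agreement of two such polynomials on $\Pm^+_\Sigma$ forces agreement on all of $\Pm_\Sigma$, and polarisation of a polynomial identity is automatic.

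For $\Delta \in \Pm^+_\Sigma$ I would introduce the canonically $T$-linearised ample line bundle $L_\Delta$ on $X_\Sigma$ and its equivariant first Chern class $c_1^T(L_\Delta) \in H^2_T(X_\Sigma,\R)$. Let $\phi\colon B \to BT$ be the classifying map of the principal $T$-bundle $E\to B$, so that $\phi^*$ restricted to $H^2(BT,\R)\simeq M_\R$ coincides with the map $c\colon M_\R \to H^2(B,\R)$. The Borel construction realises $p\colon E_X \to B$ as the pullback along $\phi$ of the universal $X_\Sigma$-bundle $\pi^T\colon ET\times_T X_\Sigma \to BT$, and Proposition~\ref{cherneq} together with the compatibility of Chern classes with associated bundles shows that the pullback of $c_1^T(L_\Delta)$ along the top map of this Cartesian square is precisely $\rho(\Delta)$.

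The main technical input is an equivariant BKK formula,
\[
\pi^T_*\!\left(c_1^T(L_\Delta)^{n+i}\right) \;=\; \frac{(n+i)!}{i!}\int_\Delta x^i\,\diff\mu(x) \;\in\; \Sym^i(M_\R) \;=\; H^{2i}(BT,\R),
\]
where $x^i$ denotes the $i$-th symmetric power inside $\Sym(M_\R)$ and the integral is the corresponding $\Sym^i(M_\R)$-valued moment. I expect this to be the main obstacle, and the natural route is $T$-equivariant Atiyah--Bott localisation at the torus-fixed points of $X_\Sigma$ (which are in bijection with the vertices of $\Delta$), combined with Brion's exponential-sum formula for $\int_\Delta e^{\langle x,\cdot\rangle}\,\diff\mu$; comparing coefficients of $t^{n+i}$ in the resulting identity for $\pi^T_*(\exp(t\,c_1^T(L_\Delta)))$ yields the displayed formula in every degree at once. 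The case $i=0$ reduces to the classical BKK theorem $\pi_*(c_1(L_\Delta)^n) = n!\,\Vol(\Delta)$, which is a helpful sanity check.

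To conclude, naturality of fibre integration along the pullback square gives $p_*(\rho(\Delta)^{n+i}) = \phi^*\bigl(\pi^T_*(c_1^T(L_\Delta)^{n+i})\bigr)$, and $\phi^*$ converts the symmetric power $x^i \in \Sym^i(M_\R)$ into the cup product $c(x)^i \in H^{2i}(B,\R)$. The projection formula then yields
\[
F_\gamma(\Delta) \;=\; \int_{E_X}\rho(\Delta)^{n+i}\cdot p^*(\gamma) \;=\; \int_B p_*(\rho(\Delta)^{n+i})\cdot\gamma \;=\; \frac{(n+i)!}{i!}\int_\Delta f_\gamma(x)\,\diff\mu(x) \;=\; \frac{(n+i)!}{i!}\,I_\gamma(\Delta),
\]
and the claimed polynomial identity, together with its polarised multilinear form, follows by clearing denominators.
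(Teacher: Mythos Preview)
Your approach is correct in outline but takes a genuinely different route from the paper's proof. The paper argues by a double induction: an outer induction on $i$ (with base case $i=0$ being the classical \BKK theorem), and for each $i$ an inner induction on the multiplicity of differential monomials of order $n$, showing that all $n$-th partial derivatives of $(n+i)!\,I_\gamma$ and $i!\,F_\gamma$ agree. The square-free base case is handled by two direct computations (Lemmas~\ref{Ider} and~\ref{Fder}), and the inductive step rewrites a repeated partial derivative using a Lie derivative $L_v$ along a suitable $v\in M_\R$, which pushes $\gamma$ to $c(v)\cdot\gamma\in H^{k-2(i-1)}(B,\R)$ and invokes the outer hypothesis (Lemma~\ref{Lvind}). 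No equivariant cohomology, localisation, or Brion-type integral formula enters.

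Your route factors through the universal bundle $ET\times_T X_\Sigma\to BT$: you isolate a purely toric identity $\pi^T_*\!\bigl(c_1^T(L_\Delta)^{n+i}\bigr)=\tfrac{(n+i)!}{i!}\int_\Delta x^i\,\diff\mu$ in $\Sym^i(M_\R)$ and then pull back along the classifying map. This is conceptually cleaner and explains \emph{why} the theorem holds for every base simultaneously, at the cost of importing the Atiyah--Bott localisation theorem and Brion's exponential-sum formula as black boxes; you should also note that $L_\Delta$ is literally a line bundle only for integral $\Delta$, so a density/linearity argument over $\R$ is needed to cover all of $\Pm^+_\Sigma$. The paper's argument, by contrast, is entirely elementary and self-contained (it reproves what it needs about convex chains in Proposition~\ref{prop:cc}), and its differentiation machinery is reused later to derive the Sankaran--Uma presentation, so it buys internal economy even if each individual step is more computational.
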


For the reader's convenience, we recall the concept of polarization (or equivalently mixed integrals).
Let $V$ be a (possibly infinite-dimensional) vector space.
Here $V = \Pm_\Sigma$ is finite-dimensional, however eventually we want to consider $\Pm$ which is an infinite-dimensional vector space.

\begin{definition}
  \label{def:Lie}
  The \emph{Lie derivative} $L_v f(x)$ of a function $f \colon V \to \R$ with respect to a vector $v \in V$ at a point $x \in V$ is the limit
  \[
    L_v f(x) = \lim_{t \to 0} \frac{f(x+t\cdot v) - f(x)}{t} \text{,}
  \]
  provided this limit exists.
\end{definition}

Note that for a finite-dimensional vector space $V$ the concept of polynomials $f \colon V \to \R$ is well-known.
In the next section, we recall how to extend the concept of polynomials to the infinite-dimensional case.
As the concept of polarization works both in the finite- and the infinite-dimensional case, we provide details for the general situation here.
The careful reader may want to consult Definition~\ref{def:poly} before reading further.

Recall that for a homogeneous polynomial $f\colon V \to \R$ of degree $m$, the \emph{polarization of $f$} is the unique symmetric multilinear form $g\colon V^m \to \R$ such that $g(v,\ldots,v)=f(v)$. 
It is a well-known fact that for any vector space $V$ and any homogeneous polynomial $f$ of degree $m$, the polarization exists and can be defined as follows:
\begin{equation}\label{eq:polarisation_formula}
  g(v_1,\ldots,v_m) = \frac{1}{m!}L_{v_1}\ldots L_{v_m}f.
\end{equation}
For a homogeneous polynomial $f\colon M_\R\to \R$, the polarization of $I_f$ is called the \emph{mixed integral of $f$}.

Note that in the case of a trivial torus bundle $p:T\to \pt$, Theorem~\ref{BKK} reduces to the classical BKK theorem.
We conclude this section with an alternative interpretation of Theorem~\ref{BKK} which can be favourable for certain applications.
By the Leray-Hirsch Theorem (see Corollary~\ref{cor:cohmodule}), $H^*(E_X, \R) \simeq H^*(B, \R)\otimes H^*(X, \R)$, and so for $\Delta \in \Pm_\Sigma$, the cycle $\rho(\Delta)^{n+i}\in H^{2n+2i}(E_X, \R)$ can be written as
\[
  \rho(\Delta)^{n+i} =  b_{2n+2i}\otimes x_{0} +  b_{2n+2i-2}\otimes x_{2} +\ldots + b_{2i+2}\otimes x_{2n-2} +  b_{2i}\otimes x_{2n},
\]
with $b_{s}\in H^s(B, \R)$ and $x_{r}\in H^{r}(X, \R)$.
As $X$ is a smooth, complete toric variety, its cohomology groups in odd degrees vanish, and therefore $x_{2k+1}=0$ for any $k$.
If $x_{2n}$ is normalized such that it is dual to a point, we call $b_{2i}$ the \emph{horizontal part} of $\rho(\Delta)^{n+i}$.
Equivalently, the horizontal part $b_{2i}$ of $\rho(\Delta)^{n+i}$ is the unique class in $H^{2i}(B,\R)$ such that
\[
  \rho(\Delta)^{n+i}\cdot p^*(\eta) = b_{2i} \cdot \eta \text{,}
\]
for any $\eta\in H^{k-2i}(B,\R)$.
Then Theorem~\ref{BKK} accepts the following reformulation.

\begin{theorem}
  \label{BKK1}
  For any $\Delta \in \Pm_\Sigma$, the horizontal part of $\rho(\Delta)^{n+i}$ can be computed as
  \[
    b_{2i} = \frac{(n+i)!}{i!} \int_\Delta c(x)^i \diff x \text{.}
  \]
\end{theorem}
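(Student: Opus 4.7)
The plan is to deduce Theorem~\ref{BKK1} directly from Theorem~\ref{BKK} together with Poincar\'e duality on $B$. The key observation is that the right-hand side of the statement, $\frac{(n+i)!}{i!}\int_\Delta c(x)^i\diff x$, is a well-defined element of $H^{2i}(B,\R)$: since $c\colon M_\R\to H^2(B,\R)$ is linear, $x\mapsto c(x)^i$ is a polynomial map from $M_\R$ to the finite-dimensional vector space $H^{2i}(B,\R)$, and its integral over a polytope $\Delta\in\Pm^+_\Sigma$ against the normalized Lebesgue measure is an element of $H^{2i}(B,\R)$. As in the definition of $I_\gamma$, one extends this integral to all of $\Pm_\Sigma$ by polynomiality (Theorem~\ref{ultimatepoly}, applied coordinate-wise with respect to any basis of $H^{2i}(B,\R)$).

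\noindent\textbf{Step 1.} Set $\tilde b_{2i}(\Delta)\coloneqq\frac{(n+i)!}{i!}\int_\Delta c(x)^i\diff x\in H^{2i}(B,\R)$. The goal is to show $\tilde b_{2i}(\Delta)=b_{2i}$, where $b_{2i}$ is the horizontal part defined in the text by the identity
\[
  \rho(\Delta)^{n+i}\cdot p^*(\eta)=b_{2i}\cdot\eta\qquad\text{for all }\eta\in H^{k-2i}(B,\R).
\]

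\noindent\textbf{Step 2.} Pair $\tilde b_{2i}(\Delta)$ with an arbitrary $\gamma\in H^{k-2i}(B,\R)$. Because cup product with the fixed class $\gamma$ is $\R$-linear on $H^{2i}(B,\R)$, it commutes with the (Riemann-style) integration that defines $\tilde b_{2i}(\Delta)$, yielding
\[
  \tilde b_{2i}(\Delta)\cdot\gamma=\frac{(n+i)!}{i!}\int_\Delta\bigl(c(x)^i\cdot\gamma\bigr)\diff x=\frac{(n+i)!}{i!}\int_\Delta f_\gamma(x)\diff x=\frac{(n+i)!}{i!}\,I_\gamma(\Delta).
\]
Both sides are polynomials in $\Delta\in\Pm_\Sigma$ that agree on the full-dimensional cone $\Pm^+_\Sigma$, so they agree everywhere.

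\noindent\textbf{Step 3.} By Theorem~\ref{BKK}, $\frac{(n+i)!}{i!}I_\gamma(\Delta)=F_\gamma(\Delta)=\rho(\Delta)^{n+i}\cdot p^*(\gamma)=b_{2i}\cdot\gamma$. Combining with Step~2, we obtain
\[
  \bigl(\tilde b_{2i}(\Delta)-b_{2i}\bigr)\cdot\gamma=0\qquad\text{for every }\gamma\in H^{k-2i}(B,\R).
\]
Since $B$ is closed and orientable, Poincar\'e duality makes the cup product pairing $H^{2i}(B,\R)\times H^{k-2i}(B,\R)\to H^k(B,\R)\simeq\R$ non-degenerate, so $\tilde b_{2i}(\Delta)=b_{2i}$, which is the claim.

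There is no real obstacle here: the entire argument is an unpacking of definitions once Theorem~\ref{BKK} is in hand. The only point requiring a small verification is the commutation in Step~2, i.e.\ that integrating a cohomology-valued polynomial and then cupping with $\gamma$ gives the same answer as cupping first and then integrating; this is immediate from $\R$-linearity of the cup product on cohomology of the fixed manifold $B$.
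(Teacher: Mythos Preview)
Your proof is correct and follows essentially the same approach as the paper's: reduce to Poincar\'e duality on $B$, compute the pairing of the candidate class with an arbitrary $\gamma\in H^{k-2i}(B,\R)$ as $\tfrac{(n+i)!}{i!}I_\gamma(\Delta)$, identify the pairing $b_{2i}\cdot\gamma$ with $F_\gamma(\Delta)$, and invoke Theorem~\ref{BKK}. The only difference is that you take the characterization ``$\rho(\Delta)^{n+i}\cdot p^*(\eta)=b_{2i}\cdot\eta$ for all $\eta$'' as the definition of the horizontal part (as the text permits, since it states this equivalence just before the theorem), whereas the paper's proof spends a paragraph verifying this identity from the Leray--Hirsch tensor decomposition by tracking the lifts $y_j$ of the fibre classes $x_j$ and showing that only the $b_{2i}\otimes x_{2n}$ term survives the pairing with $p^*(\gamma)$.
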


Note that $c(\cdot)^i \colon M_\R  \to H^{2i}(B,\R)$ is a vector valued map whose components (after choosing suitable coordinates) are given by homogeneous polynomials of degree $i$.
Thus the integral in Theorem~\ref{BKK1} exists.
Furthermore, although we show that Theorem~\ref{BKK} implies Theorem~\ref{BKK1}, in fact they are equivalent.

\begin{proof}
  Since $H^*(B, \R)$ satisfies Poincar\'{e} duality, it suffices to check that for any $\gamma \in H^{k-2i}(B, \R)$, we have
  \[
    i! \cdot \gamma \cdot b_{2i} = (n+i)! \cdot \gamma \cdot \int_\Delta c(x)^i \diff x,
  \]
  Recall from Theorem~\ref{thm:Leray-Hirsch}, that there are $u_1, \ldots, u_r \in H^*(E_X, \R)$ such that the restrictions $i^*(u_i)$ form a basis for the cohomology of each fibre $X$ where $i \colon X \to E_X$ is the inclusion.
  In particular, there are $y_i \in \R u_1 \oplus \ldots \oplus \R u_r$ such that $i^*(y_i) = x_i$.
  Since $x_{2n}$ is Poincar\'{e} dual to a point (say to a torus fixed point $x \in X$), it follows that $y_{2n} = E \times_T \{x\}$, i.e. $y_{2n}=[S]^*$ is the class Poincar\'e dual to a section of $p$.
  
  Let $[\pt]^* \in H^k(B, \R)$ be the class dual to a point in $B$.
  For $\gamma \in H^{k-2i}(B, \R)$, let $\gamma \cdot b_{2i} = a \cdot [\pt]^*$ for some real number $a \in \R$.
  Then,
  \begin{align*}
    p^*(\gamma) \cdot \rho(\Delta)^{n+i} &= p^*(\gamma) \cdot (p^*(b_{2n+2i}) \cdot y_0 + p^*(b_{2n+2i-2}) \cdot y_2+ \ldots + p^*(b_{2i+2}) \cdot y_{2n-2}+p^*(b_{2i}) \cdot y_{2n})\\
    &= p^*(\gamma \cdot b_{2i}) \cdot y_{2n} = a \cdot p^*([\pt]^*) \cdot E\times_T \{x\} =a \cdot [X]^* \cdot [S]^*= a \in H^{2n+k}(E_X, \R) \simeq \R \text{,}
  \end{align*}
  where $[X]^*$ is the class dual to a fibre of $p$.
  Hence, if we interpret $\gamma \cdot b_{2i}$ and $p^*(\gamma) \cdot \rho(\Delta)^{n+i}$ as real numbers (multiples of classes dual to a point), we get
  \[
    \gamma \cdot b_{2i} = p^*(\gamma) \cdot \rho(\Delta)^{n+i} = F_\gamma(\Delta) \text{.}
  \]
  On the other hand,
  \[
    \gamma \cdot \int_\Delta c(x)^i \diff x = \int_\Delta \gamma \cdot c(x)^i \diff x = I_\gamma(\Delta).
  \]
  The statement follows by Theorem~\ref{BKK}.
\end{proof}

Like Theorem~\ref{BKK}, Theorem~\ref{BKK1} admits a polarized version.

%%%%%%%%%%%%%%%%%%%%%%%%%%%%%%%%%%%%%%%%%%%%%%%%%%%%%%%%%%%%%%%%%

\section{Convex chains}
\label{sec:convex-chains}

In this subsection, we recall how to extend the definition of $I_\gamma:\Pm_\Sigma^+\to \R$ from to all of $\Pm_\Sigma$.
In exposition we mostly follow \cite{PK92}.
Suppose that $\Sigma \subseteq N_\R$ is a \emph{simplicial} fan, i.e., any cone in $\Sigma$ is spanned by part of a basis of $N_\R$.
Then the primitive generators $e_1, \ldots, e_s$ of the rays $\Sigma(1) = \{ \rho_i=\R_{\ge0}e_i \colon i = 1, \ldots, s \}$ naturally induce coordinates on $\Pm_\Sigma$:
a piecewise linear function $h \in \Pm_\Sigma$ is uniquely determined by the tuple $( h(e_1), \ldots, h(e_s))$.
We denote the corresponding coordinates on $\Pm_\Sigma \simeq \R^s$ by $(h_1,\ldots,h_s)$.

We want to work with the space of virtual polytopes $\Pm$.
This is an infinite-dimensional vector space obtained as inverse limit $\Pm =\varinjlim \Pm_\Sigma$ where the limit is taken over all complete fans $\Sigma$ (partially ordered with respect to refinement of fans).
As mentioned above, we consider polynomials, like $I_\gamma$, on this space.
Let us recall the following notion of polynomials on a (possibly infinite-dimensional) real vector space $V$:

On the vector space of functions $g \colon V \to \R$, introduce the \emph{(forward) difference operator}:
\[
  D_v g(x) \coloneqq g(x+v) - g(x) \qquad \text{where $v \in V$.}
\]
Recall that a real topological vector space $U$ is a real vector space equipped with a topology that makes the vector space operations $+ \colon U \times U \to U$ and $\cdot \colon \R \times U \to U$ continuous.
It is a known fact that every finite-dimensional Hausdorff topological vector space has the usual topology.
From now on, we consider all finite-dimensional subspaces $U \subseteq V$ to be equipped with a topology such that $U$ is a Hausdorff topological vector space.

\begin{definition}
  \label{def:poly}
  A function $f \colon V \to \R$ is called a \emph{polynomial of degree $\le m$} if for any $v_1, \ldots, v_{m+1} \in V$
  \[
    D_{v_1} \cdots D_{v_{m+1}} f(x) = 0 \qquad \text{for any $x \in V$,}
  \]
  and the restriction $f|_U \colon U \to \R$ is continuous for any finite-dimensional vector space $U \subseteq V$.
\end{definition}

\begin{remark}
  \label{rem:poly}
  It is straightforward to show that, if $f \colon V \to \R$ is a polynomial and $v_1 \ldots, v_n \in V$ are linearly independent, then for any $\lambda_1, \ldots, \lambda_n \in \R$, we have
  \begin{equation}
    \label{eq:poly}
    f(\lambda_1 v_1 + \ldots + \lambda_n v_n) = \sum_{\alpha \in \Z_{\ge0}^n} D^\alpha f(0) \binom{\lambda_1}{\alpha_1} \cdots \binom{\lambda_n}{\alpha_n}
  \end{equation}
  where $D^\alpha \coloneqq D_{v_1}^{\alpha_1} \cdots D_{v_n}^{\alpha_n}$ and $\binom{\lambda}{\alpha} = \tfrac{\lambda \cdot (\lambda-1) \cdots (\lambda-\alpha+1)}{\alpha!}$ denotes the binomial coefficient.
  Note that Definition~\ref{def:poly} deviates from \cite[Section 2, Definition 1]{PK92} by the additional assumption of continuity.
  Indeed, without this assumption, equation~(\ref{eq:poly}) would be true for $\lambda_1, \ldots, \lambda_n \in \Q$ (which is sufficient in \cite{PK92}).
  However, in our setting it is more natural to work over the field of real numbers, which makes this further assumption necessary.
\end{remark}

In view of Remark~\ref{rem:poly}, we may rephrase Definition~\ref{def:poly} as follows:

\begin{proposition}
  A function $f \colon V \to \R$ is a polynomial of degree $\le m$ if and only if its restriction to any finite dimensional subspace $U \subseteq V$ is a polynomial of degree $\le m$ in the usual sense, i.e., an element in $\Sym(U^*)$.
\end{proposition}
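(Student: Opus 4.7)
The plan is to prove both directions, with the nontrivial direction relying on a classical "approximation by scaled lattices" argument. I would first dispatch the easy direction $(\Leftarrow)$: if $f|_U \in \Sym^{\le m}(U^*)$ for every finite-dimensional $U \subseteq V$, then continuity of $f|_U$ holds trivially. For the difference condition, given $v_1, \ldots, v_{m+1}, x \in V$, simply take $U \coloneqq \lspan\{x, v_1, \ldots, v_{m+1}\}$, which is finite-dimensional. Since $f|_U$ is a polynomial of degree $\le m$ on $U$, its $(m+1)$-fold mixed difference vanishes identically on $U$, yielding $D_{v_1} \cdots D_{v_{m+1}} f(x) = 0$.

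For $(\Rightarrow)$, let $U \subseteq V$ be finite-dimensional with basis $v_1, \ldots, v_n$, and define $g \colon \R^n \to \R$ by $g(\lambda) \coloneqq f(\lambda_1 v_1 + \cdots + \lambda_n v_n)$. Then $g$ is continuous by Definition~\ref{def:poly}, and transporting the difference condition through the basis map shows that $D_{s_1} \cdots D_{s_{m+1}} g(\lambda) = 0$ for any $s_1, \ldots, s_{m+1}, \lambda \in \R^n$. It thus suffices to show that any such $g$ is a polynomial of degree $\le m$ in $\lambda_1, \ldots, \lambda_n$. For each positive integer $N$, apply the Newton forward difference formula (\ref{eq:poly}) from Remark~\ref{rem:poly} to $f$ using the rescaled basis $v_1/N, \ldots, v_n/N$: on the lattice points $k_1/N, \ldots, k_n/N$ with $k_i \in \Z$, we obtain
\[
  g(k_1/N, \ldots, k_n/N) = \sum_{|\alpha| \le m} \bigl(D_{v_1/N}^{\alpha_1} \cdots D_{v_n/N}^{\alpha_n} f\bigr)(0) \binom{k_1}{\alpha_1} \cdots \binom{k_n}{\alpha_n},
\]
where the sum is truncated at $|\alpha| \le m$ because $D_{v_1} \cdots D_{v_{m+1}} f \equiv 0$ implies that all $(m{+}1)$-fold iterated differences vanish. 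Substituting $k_i = N \mu_i$ shows that $g$ coincides on the rescaled lattice $(1/N)\Z^n$ with a genuine polynomial $\widetilde{P}_N \in \R[\mu_1, \ldots, \mu_n]$ of degree $\le m$.

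The remaining step is to assemble these polynomials into a single one. For $N_1 \mid N_2$, both $\widetilde{P}_{N_1}$ and $\widetilde{P}_{N_2}$ agree with $g$ on $(1/N_1)\Z^n$, and since this set is Zariski-dense in $\R^n$, the two polynomials coincide. Hence there is a single polynomial $P \in \R[\mu_1, \ldots, \mu_n]$ of degree $\le m$ with $g = P$ on $\bigcup_{N \ge 1} (1/N)\Z^n = \Q^n$. As $g$ is continuous and $\Q^n$ is dense in $\R^n$, we conclude $g = P$ everywhere, so $f|_U \in \Sym^{\le m}(U^*)$. The main obstacle I expect is exactly this last extension from the integer/rational lattice to all of $\R^n$: the classical Newton formula is inherently discrete, and without the continuity hypothesis in Definition~\ref{def:poly} one could only recover polynomiality on $\Q^n$ (as in \cite{PK92}); the rescaling trick is what bridges this gap in the real setting.
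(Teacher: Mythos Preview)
Your proof is correct and follows the approach the paper indicates. The paper does not give an explicit proof of this proposition; it simply states that, in view of Remark~\ref{rem:poly} (the Newton forward difference formula~(\ref{eq:poly})), Definition~\ref{def:poly} can be rephrased this way. Your argument supplies precisely the details the paper elides: you use the classical Newton identity at integer lattice points, rescale to reach $\Q^n$, and invoke the continuity hypothesis to pass to $\R^n$ --- which is exactly the mechanism Remark~\ref{rem:poly} alludes to when it notes that without continuity one only gets~(\ref{eq:poly}) over $\Q$.
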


Recall the objective of this section:
Any smooth function $f\colon M_\R \to \R$ defines a function $I_f$ on the cone of convex polytopes via (here $\mu$ denotes the standard Lebesgue measure on $M_\R$)
\[
  I_f \colon \Pm^+ \to \R; I_f(\Delta) =\int_\Delta f(x) \diff \mu \text{.}
\]
Our goal is to extend $I_f$ to the whole space of virtual polytopes $\Pm$.
We recall how this is done following \cite{PK92}.

Observe that $I_f \colon \Pm^+ \to \R$ is a \emph{finitely additive measure} (also called a \emph{valuation}) on the cone of convex polytopes, i.e. for any two convex polytopes $P,Q \in \Pm^+$ such that $P \cup Q$ is in $\Pm^+$, we have
\[
  I_f(P \cup Q) = I_f(P) + I_f(Q) - I_f(P\cap Q) \text{.}
\]
A fundamental idea in \cite{PK92} is the introduction of the group of \emph{convex chains} $Z(M_\R)$, i.e. the additive group consisting of functions of the form $\alpha = \sum_{i=1}^k n_i  \mathds{1}_{P_i}$ for integers $n_i$ and convex polytopes $P_i \in \Pm^+$ where $\mathds{1}_{P_i}$ denotes the characteristic function of the set $P_i$.
The \emph{degree} of a convex chain $\alpha \in Z(M_\R)$ is defined to be $\sum_{i=1}^k n_i$ if $\alpha = \sum_{i=1}^kn_i\mathds{1}_{P_i}$ for some $P_i \in \Pm^+$ (this is a well-defined number by \cite[Proposition-Definition 2.1]{PK92}).
In \cite[Proposition-Definition 2.3]{PK92}, Pukhlikov and the second author observe that the Minkowski sum of polytopes induces a ring structure on $Z(M_\R)$ via
\[
  \mathds{1}_P \star \mathds{1}_Q = \mathds{1}_{P+Q} \qquad \text{for $P, Q \in \Pm^+$.}
\]
Clearly, the semigroup $\Pm^+$ can be embedded in the algebra of convex chains $Z(M_\R)$.
Furthermore, in \cite[Section 6]{PK92} it is shown that $\Pm$ can be identified with the set of invertible convex chains of degree $1$.

Observe that finitely additive measures (as defined above) are in correspondence with homomorphisms of additive groups $\varphi \colon Z(M_\R) \to \R$.
A finitely additive measure is \emph{polynomial of degree $\le m$} if for each $\alpha \in Z(M_\R)$ the function $M_\R \to \R$ given by $\lambda \mapsto \varphi(\tau(\lambda, \alpha))$ is polynomial of degree $\le m$ where $\tau(\lambda, \cdot) \colon Z(M_\R) \to Z(M_\R)$ denotes the linear map induced by translation by $\lambda$, i.e. $\tau(\lambda, \alpha) (x) = \alpha(x - \lambda)$.
A fundamental result about polynomial finitely additive measures is the following theorem:

\begin{theorem}[{\cite[Corollary 7.5]{PK92}}]
  \label{thm:pol-measure}
  A polynomial finitely additive measure $\varphi \colon Z(M_\R)\to \R$ of degree $\le k$ restricted to the space of virtual polytopes $\varphi \colon \Pm \to \R$ is a polynomial of degree $\le \dim(M_\R) + k$ in the sense of Definition~\ref{def:poly} (however the continuity assumption might not be satisfied in general).
\end{theorem}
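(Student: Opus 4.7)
The plan is to reduce the infinite-dimensional polynomial claim to finite-dimensional claims on each $\Pm_\Sigma$, and then exploit the two polynomial structures that coexist inside the convex chain algebra $Z(M_\R)$: Minkowski addition and translation.

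First, I would reduce to each $\Pm_\Sigma$ separately. By Definition~\ref{def:poly}, polynomiality of $\varphi$ on $\Pm$ is tested on every finite-dimensional subspace, so one needs the iterated forward differences of order $n + k + 1$ (with $n = \dim M_\R$) to vanish on every such subspace. Since $\Pm = \varinjlim_\Sigma \Pm_\Sigma$ over complete simplicial fans ordered by refinement, every finite-dimensional subspace of $\Pm$ lies inside $\Pm_\Sigma$ for some sufficiently fine $\Sigma$. Hence it suffices to prove that for each complete simplicial fan $\Sigma$ with rays $\rho_1, \ldots, \rho_s$, the restriction $\varphi|_{\Pm_\Sigma} \colon \R^s \to \R$ (with coordinates the support values $h_i = h_\Delta(e_i)$ on the ray generators) is an ordinary polynomial of degree $\le n + k$. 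The statement itself concedes that continuity on finite-dimensional subspaces may fail, so only the vanishing of the iterated differences is at stake.

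Second, I would analyse $\varphi$ on the cone $\Pm_\Sigma^+$. Pick polytopes $\Delta_1, \ldots, \Delta_s \in \Pm_\Sigma^+$ whose support functions span $\Pm_\Sigma \simeq \R^s$ and study
\[
    F(t_1, \ldots, t_s) = \varphi\bigl(t_1 \Delta_1 + \cdots + t_s \Delta_s\bigr)
\]
on the open positive orthant (shifted if necessary so that the Minkowski sum lies in $\Pm_\Sigma^+$). The core claim is that $F$ is a polynomial in $t$ of total degree $\le n + k$. The key ingredients are the two polynomial structures inside $Z(M_\R)$: the translation hypothesis, which yields polynomial dependence of degree $\le k$ of $\varphi$ on any translation parameter $\lambda \in M_\R$, and the Minkowski-polynomial behaviour of polytope characteristic functions, which for any finitely additive measure produces polynomial dependence of degree $\le n$ on scaling parameters — the classical prototype being that $(t_1, \ldots, t_s) \mapsto \mathrm{Vol}(t_1 \Delta_1 + \cdots + t_s \Delta_s)$ is homogeneous of degree $n$. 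A subdivision argument (triangulating $t_1 \Delta_1 + \cdots + t_s \Delta_s$ into simpler pieces whose vertices depend linearly on $t$, translating each piece to a common reference position, and then applying the translation hypothesis) combines these two polynomial behaviours additively to produce the bound $n + k$.

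Third, I would extend from $\Pm_\Sigma^+$ to all of $\Pm_\Sigma$. Any $\Delta \in \Pm_\Sigma$ is a difference $P - Q$ with $P, Q \in \Pm_\Sigma^+$, and under the identification of $\Pm$ with the invertible degree-one convex chains established in \cite{PK92}, the finite additivity of $\varphi$ forces $\varphi(\Delta)$ to be determined by the values on $\Pm_\Sigma^+$ in a way compatible with polynomial extension. A polynomial of degree $\le n + k$ on an open cone of a finite-dimensional vector space extends uniquely to a polynomial of the same degree on the ambient space, and this extension agrees with $\varphi$. The main obstacle is the degree bookkeeping in the second step: the translation hypothesis contributes $k$ and the $n$-dimensional structure of polytope chains contributes $n$, and one has to force them to combine additively rather than multiplicatively. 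Concretely, the subdivision must be chosen so that on each piece the translation acts in degree $k$ while the piece itself scales in a controlled way of degree $n$; ensuring that such a decomposition is simultaneously compatible with the $\star$-product on $Z(M_\R)$ and the translation action of $M_\R$ is the delicate technical point.
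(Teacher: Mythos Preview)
The paper does not prove this statement; it is quoted verbatim as \cite[Corollary~7.5]{PK92} and used as a black box in Section~\ref{sec:convex-chains}. There is no proof in the paper to compare your proposal against.

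That said, a brief comment on the proposal itself. Your first step (reduction to each $\Pm_\Sigma$) is sound. Your third step, however, contains a real gap: knowing that $\varphi$ is a polynomial on the open cone $\Pm_\Sigma^+$, and that a polynomial on an open cone extends uniquely to the ambient space, does \emph{not} show that this extension agrees with $\varphi$ on the rest of $\Pm_\Sigma$. The map $\varphi$ is defined on all of $Z(M_\R)$ as a group homomorphism, and its values on virtual polytopes $P-Q$ are determined by the explicit formula for the $\star$-inverse $\mathds{1}_Q^{-1}$ in $Z(M_\R)$ (in \cite{PK92} this is, up to sign and boundary conventions, $(-1)^{\dim M_\R}\mathds{1}_{-Q}$). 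You must actually compute $\varphi$ on such inverses and check that the result matches the polynomial extension --- finite additivity alone does not do this for you, because Minkowski difference is not a set-theoretic operation. Your second step is also too vague to stand as written: the ``subdivision argument'' that is supposed to combine the translation degree $k$ with the Minkowski degree $n$ additively is precisely the content of the theorem, and you have not said how to carry it out. The argument in \cite{PK92} goes through the ring structure on $Z(M_\R)$ and an explicit analysis of how iterated Minkowski-differences act on characteristic functions, rather than through an ad hoc triangulation.
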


We now apply the results of \cite{PK92} summarized above to our situation.
Note that $I_f \colon \Pm^+\to \R$ can be naturally extended to a homomorphism of additive groups $Z(M_\R) \to \R$ via
\[
  I_f(\alpha) = \int_{M_\R} f(x) \cdot \alpha(x) \diff\mu = \sum_{i=1}^k n_i \int_{P_i} f(x) \diff \mu \qquad \text{for $\alpha = \sum_{i=1}^k n_i \mathds{1}_{P_i} \in Z(M_\R)$.}
\]
Furthermore, $I_f \colon Z(M_\R) \to \R$ is a polynomial finitely additive measure of degree $\le \deg(f)$ which follows by the observation that the following map is a polynomial of degree $\le \deg(f)$ for any $\alpha = \sum_{i=1}^kn_i\mathds{1}_{P_i} \in Z(M_\R)$:
\[
  I_f(\tau(\cdot,\alpha)) \colon M_\R \to \R; \quad \lambda \mapsto \int_{\tau(\lambda, \alpha)} f(x) \diff \mu = \sum_{i=1}^k n_i \cdot \int_{P_i+\lambda} f(x) \diff \mu = \sum_{i=1}^k n_i \cdot \int_{P_i} f(x-\lambda) \diff \mu \text{.}
\]
By Theorem~\ref{thm:pol-measure}, the restriction of $I_f$ to the space of virtual polytopes is a polynomial of degree $\le \dim(M_\R) + \deg(f)$.
The continuity assumption of Definition~\ref{def:poly} can be seen as follows.

By \cite[Corollary 2.2]{PK92}, the natural inclusion $\Pm^+ \to Z(M_\R); P \mapsto \mathds{1}_P$ extends to an isomorphism of groups from $\Pm$ (with Minkowski addition) onto the invertible elements $\Pm^*$ of $Z(M_\R)$ of degree $1$ (equipped with the multiplication operation).
The scalar multiplication on $\Pm$ naturally carries over to $\Pm^*$:
\[
  \left( \mathds{1}_P\right)^\lambda = \begin{cases}
    \mathds{1}_{\lambda P} & \text{if $\lambda\ge0$,}\\
    \mathds{1}^{-1}_{-\lambda P} & \text{if $\lambda<0$}
  \end{cases} \qquad \text{for any convex polytope $P \subseteq M_\R$ and every real scalar $\lambda$.}
\]
We obtain an isomorphism of vector spaces between $\Pm \simeq \Pm^*$.
By our choice of the topology from above, restricting this isomorphism to corresponding finite-dimensional subspaces yields a homeomorphism.
Let $U \subseteq \Pm^*$ be a finite-dimensional subspace and choose a basis $\alpha_1, \ldots, \alpha_n \in U$.
Then two elements $\lambda = \alpha_1^{\lambda_1} \star \cdots \star \alpha_n^{\lambda_n}$ and $\mu = \alpha_1^{\mu_1} \star \cdots \star \alpha_n^{\mu_n}$ in $U$ are close to each other if and only if the scalars $\lambda_i$ and $\mu_i$ are close to each other for any index $i$.
It is straightforward to show that $I_f(\lambda)$ and $I_f( \mu )$ are close to each other, i.e. $I_f$ is continuous.

Recall that a polynomial $f \colon V \to \R$ on a (possibly infinite-dimensional) vector space $V$ is called \emph{homogeneous of degree $m$} if for all $v \in V$ and every $t \in \R$, we have $f(t\cdot v) = t^m \cdot f(v)$.

To summarize the above, let us state the following theorem.
\begin{theorem}
  \label{ultimatepoly}
  If $f\colon M_\R \to\R$ is a homogeneous polynomial of degree $m$, then the  function $I_f\colon \Pm^+\to \R; \Delta \mapsto I_f(\Delta) =\int_\Delta f(x) \diff \mu$ admits a unique extension to a homogeneous polynomial of degree $n+m$ on $\Pm$.
\end{theorem}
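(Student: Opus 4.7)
The plan is to assemble the machinery developed in the preceding discussion into three steps: (i) extend $I_f$ to the ring of convex chains and verify it is a polynomial finitely additive measure of degree $\le m$; (ii) invoke Theorem~\ref{thm:pol-measure} and verify the extra continuity clause of Definition~\ref{def:poly}; (iii) upgrade the polynomial bound to exact homogeneity of degree $n+m$ and derive uniqueness.

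For step (i) I would extend $I_f$ to $Z(M_\R)$ by $I_f\bigl(\sum_i n_i \mathds{1}_{P_i}\bigr) = \sum_i n_i \int_{P_i} f\diff\mu$; well-definedness is the standard inclusion--exclusion argument on characteristic functions, and additivity is immediate. The polynomial-measure property then reduces to the observation
\[
  I_f(\tau(\lambda,\alpha)) = \sum_i n_i \int_{P_i} f(x-\lambda)\diff\mu,
\]
which is visibly polynomial in $\lambda\in M_\R$ of degree $\le m$ once $f(x-\lambda)$ is expanded via its polarization. Theorem~\ref{thm:pol-measure} then furnishes the bound $\deg I_f|_{\Pm} \le n+m$ in the difference-operator sense. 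The continuity requirement in Definition~\ref{def:poly} is handled on each finite-dimensional $\Pm_\Sigma$ via the homeomorphism $\Pm_\Sigma \simeq \Pm_\Sigma^*$ sending $\Delta\mapsto \mathds{1}_\Delta$, combined with continuous dependence of $\int_P f\diff\mu$ on the coordinates defining $P$ (a Lipschitz estimate on support functions followed by boundedness of $f$ on compacta suffices).

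For step (iii), homogeneity of the correct degree is a direct change of variables on $\Pm^+$: for $t>0$ and $\Delta\in \Pm^+$,
\[
  I_f(t\Delta) = \int_{t\Delta} f(x)\diff\mu = t^n \int_\Delta f(ty)\diff\mu(y) = t^{n+m} I_f(\Delta),
\]
by the Jacobian of scaling on $M_\R$ and the homogeneity of $f$. Both sides are polynomials in $(t,\Delta) \in \R\times\Pm_\Sigma$ that agree on the nonempty open cone $\R_{>0}\times \Pm_\Sigma^+$, hence on all of $\R\times\Pm_\Sigma$; passing to the limit over refining fans $\Sigma$ yields homogeneity on $\Pm$. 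Uniqueness is the same Zariski-density principle: $\Pm_\Sigma^+$ is a full-dimensional cone in $\Pm_\Sigma$, so any polynomial on $\Pm_\Sigma$ is determined by its restriction to $\Pm_\Sigma^+$, and then one passes to the limit.

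The only genuinely subtle point I anticipate is the continuity clause, since \cite{PK92} proves polynomiality rationally rather than over $\R$; however, with the topology on $\Pm^*$ inherited from the identification $\Pm\simeq \Pm^*$, continuity of $I_f$ reduces to the elementary estimate indicated above. All other ingredients are essentially in place in the text preceding the theorem.
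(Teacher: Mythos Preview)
Your proposal is correct and follows essentially the same route as the paper: the theorem is stated as a summary of the preceding discussion, which carries out exactly your steps (i) and (ii)---extension to $Z(M_\R)$, verification that $I_f$ is a polynomial finitely additive measure via the translation computation, appeal to Theorem~\ref{thm:pol-measure}, and a continuity argument through the identification $\Pm\simeq\Pm^*$. Your step (iii) on homogeneity and uniqueness is more explicit than the paper, which leaves these points tacit; the change-of-variables and Zariski-density arguments you give are the natural ones.
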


%%%%%%%%%%%%%%%%%%%%%%%%%%%%%%%%%%%%%%%%%%%%%%%%%%%%%%%%%%%%%%%%%

\section{Proof of the \BKK theorems}
\label{BKKproof}

In this section we prove Theorem~\ref{BKK}.
For the reader's convenience, we recall the used notation.
Let $p \colon E \to B$ be a principal torus bundle with respect to an $n$-dimensional torus $T$ over a smooth compact orientable manifold $B$ of real dimension $k$.
The character lattice of $T$ we denote by $M$.
Let $\Sigma \subseteq N_\R = \Hom_\Z(M,\R)$ be a smooth complete fan with rays $\rho_1, \ldots, \rho_s$.
The primitive ray generators we denote by $e_1, \ldots, e_s$.
Let $h_1, \ldots, h_s$ be the basis of $\Pm_\Sigma$ consisting of piecewise linear functions which vanish on every ray of $\Sigma$ except one where it evaluates with $1$ on the corresponding primitive ray generator.
Let $X = X_\Sigma$ be the toric variety corresponding to $\Sigma$ and denote the total space of the associated toric bundle by $E_X$.
To keep notation simple we use the same notation $p \colon E_X \to B$ for the projection map of the toric bundle.
Fix $i \le \frac{k}{2}$ and a class $\gamma \in H^{k-2i}(B,\R)$.
The rays $\rho_i$ correspond to divisors in $X$ which give rise to divisors $D_i$ in $E_X$.
For a virtual polytope $\Delta \in \Pm_\Sigma$, we introduced
\[
  \rho(\Delta) = \sum_{i=1}^s h_\Delta(e_i) [D_i] \in H^2(E_X, \R) \text{,}
\]
where $[D_i]$ is the class dual to $D_i \subseteq E_X$ and $h_\Delta \colon N_\R \to \R$ is the support function of $\Delta$.
Further, we introduce
\[
  F_\gamma \colon \Pm_\Sigma \to \R; F_\gamma(\Delta) = \rho(\Delta)^{n+i} \cdot p^*(\gamma) \text{.}
\]
We noticed that $F_\gamma$ is a homogeneous polynomial of degree $n+i$ on $\Pm_\Sigma$.

Recall that for any character $\lambda \in M$ we have an associated complex line bundle $\mathcal{L}_\lambda$ on $B$.
Taking Chern classes and extending by linearity, we obtain a morphism of vector spaces $c \colon M_\R \to H^2(B,\R)$.
Let $f_\gamma \colon M_\R \to \R$ be the function $f_\gamma(x) = c(x)^i \cdot \gamma$.
We defined a map $I_\gamma \colon \Pm_\Sigma\to \R$ which is explicitly given for $\Delta \in \Pm_\Sigma^+$ by
\[
  I_\gamma(\Delta) = \int_\Delta f_\gamma(x) \diff\mu\text{.}
\]
In the previous section, we saw how to extend this definition to all of $\Pm_\Sigma$.
Furthermore, by Theorem~\ref{ultimatepoly},  $I_\gamma$ is a homogeneous polynomial of degree $n+i$ on $\Pm_\Sigma$.

We prove Theorem~\ref{BKK} by induction on the parameter $0 \le i \le \frac{k}{2}$.

Let us start with the base case $i=0$.
If $i=0$, then $\gamma \in H^k(B, \R)$ is a multiple of the class dual to a point.
For simplicity, let us assume that this multiple is $1$.
So $p^*(\gamma)$ is the class dual to a fibre, which is a toric variety, and thus $\rho(\Delta)^n\cdot p^*(\gamma)$ coincides with the degree of the divisor in $X$ corresponding to $\Delta$.
By the classical \BKK theorem, this can be computed by the normalized volume of $\Delta$ which equals to $n! \cdot I_\gamma(\Delta)$.
In other words, for $i=0$, Theorem~\ref{BKK} reduces to the classical \BKK theorem.

As induction hypothesis, suppose that we know Theorem~\ref{BKK} for some $i-1\ge0$.
The induction step consists of proving that Theorem~\ref{BKK} is also true for $i$.
Since both $F_\gamma$ and $I_\gamma$ are homogeneous polynomials of the same degree $n+i$, in order to show equality between $(n+i)!\cdot I_\gamma(\Delta)$ and $i!\cdot F_\gamma(\Delta)$, it suffices to show that all their partial derivatives of order $n$ coincide.
In other words, it suffices to consider differential monomials $\partial_{i_1}^{k_1}\dots\partial_{i_r}^{k_r}$ where  $\partial_i= \partial/\partial_{h_i}$ are the partial derivatives along the coordinate vectors of $\Pm_\Sigma \simeq \R^s$ and $\sum_{i=1}^r k_i =n$.
Let us call the number $\sum_{i=1}^r (k_i-1)$ the \emph{multiplicity of the monomial} $\partial_{i_1}^{k_1}\dots\partial_{i_r}^{k_r}$.
In particular, a monomial has multiplicity $0$ if and only if it is square free.

The proof of equality of the partial derivatives of order $n$ of $(n+i)! \cdot I_\gamma(\Delta)$ and $i! \cdot F_\gamma(\Delta)$ is by induction on the multiplicity $m$ of the applied differential monomial.
We will refer to the induction over $i$ as the ``outer induction'' and we are going to call the induction over $m$ the ``inner induction''.

The base case of the inner induction (i.e., the case of square free differential monomials) is treated in the next two subsections.
The result of these calculations is summarized in Proposition~\ref{sqfree}.

%%%%%%%%%%%%%%%%%%%%%%%%%%%%%%%%%%%%%%%%%%%%%%%%%%%%%%%%%%%%%%%%%

\subsection{\texorpdfstring{Differentiation of $I_\gamma$}{Differentiation of I-gamma}}
The main result of this subsection is Lemma~\ref{Ider} below which computes the partial derivative of $I_\gamma$ with respect to square free monomial.
Here, we work with a slightly more general situation than needed for the proof of Theorem~\ref{BKK}: we assume that $\Sigma$ is a simplicial fan, and we define the function $I_f\colon \Pm \to \R$ via $I_f(\delta)= \int_\delta f \diff\mu$ for any smooth function $f$ on $M_\R$.
In the proof of Theorem~\ref{BKK}, we use $f=f_\gamma$.

\begin{lemma}
  \label{Ider}
  Let $I = \{ i_1, \ldots, i_r\} \subseteq \{ 1, \ldots, s \}$ be a subset and $k_1, \ldots, k_r$ positive integers.
  If $\Delta$ is a polytope in the interior of $\Pm_\Sigma^+$ and $\rho_{i_1},\ldots,\rho_{i_r}$ do not span a cone in $\Sigma$, then we have
  \[
    \partial_{i_1}^{k_1} \cdots \partial_{i_r}^{k_r} \left(I_f|_{\Pm_\Sigma}\right)(\Delta) = 0 \text{.}
  \]
  However, if $r = n$ and $\rho_{i_1}, \ldots, \rho_{i_n}$ span a cone in $\Sigma$ dual to the vertex $A\in \Delta$, we have 
  \[
    \partial_I \left(I_f|_{\Pm_\Sigma}\right)(\Delta) = f(A) \cdot |\det (e_{i_1}, \ldots, e_{i_n})| \text{.}
  \]
\end{lemma}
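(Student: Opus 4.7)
The plan is to establish an iterated boundary-integral rule: each partial derivative $\partial_{h_i}$ applied to $I_f$ reduces an integral over a face of $\Delta$ to an integral over a codimension-one sub-face, and such a sub-face exists (locally, under perturbations of the $h_j$) precisely when the corresponding collection of rays spans a cone of $\Sigma$. Both assertions of the lemma will then drop out of this rule.

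First I would prove the base rule. For $\Delta$ in the interior of $\Pm_\Sigma^+$, the combinatorial type of $\Delta$ is locally constant, so a small perturbation of $h_i$ merely translates the supporting hyperplane of $F_i$ parallel to itself. Splitting $\diff\mu$ in adapted coordinates as $\diff h_i \cdot \diff\sigma_i$, where $\diff\sigma_i$ is the Lebesgue measure on $\{x : \langle x, e_i\rangle = h_i\}$ normalised by the sublattice $M \cap e_i^\perp$, a standard transport-of-volume argument gives
\[
  \partial_{h_i} I_f(\Delta) = \int_{F_i} f \, \diff\sigma_i\text{.}
\]
The splitting is geometrically meaningful because $e_i$ is primitive.

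Next I would iterate. Because $\Sigma$ is simplicial and $\Delta$ is simple, the intersection $F_{i_1} \cap \cdots \cap F_{i_s}$ is a nonempty codimension-$s$ face of $\Delta$ exactly when $\rho_{i_1}, \ldots, \rho_{i_s}$ span a cone of $\Sigma$; otherwise it is empty and remains so under small perturbations of any single $h_j$. Applying $\partial_{h_{i_{s+1}}}$ to the face integral from step $s$ therefore either extends the integration to $F_{i_1} \cap \cdots \cap F_{i_{s+1}}$ (when the enlarged collection spans a cone) or returns zero. Induction on the number of distinct indices yields the first claim for square-free monomials, and the general case with higher multiplicities follows because partial derivatives commute:
\[
  \partial_{i_1}^{k_1} \cdots \partial_{i_r}^{k_r} I_f = \partial_{i_1}^{k_1 - 1} \cdots \partial_{i_r}^{k_r - 1}\bigl(\partial_{i_1} \cdots \partial_{i_r} I_f\bigr)\text{,}
\]
and the inner square-free derivative already vanishes. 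For the second claim, iterating $n$ times along $\rho_{i_1}, \ldots, \rho_{i_n}$ spanning a maximal cone collapses $F_{i_1} \cap \cdots \cap F_{i_n}$ to the vertex $A$, and the iterated integral becomes $f(A)$ multiplied by the product of the renormalisation constants accumulated at each successive lattice slicing.

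The main obstacle will be identifying this accumulated product with the lattice factor $|\det(e_{i_1}, \ldots, e_{i_n})|$ from the lemma statement: at each slicing step the new surface measure is induced on a quotient lattice of $M$, and the co-volumes of the resulting nested sublattices of $N$ must be matched up to produce exactly the asserted determinant. The purely combinatorial vanishing statement is transparent from the simple-polytope structure in the interior of $\Pm_\Sigma^+$; only the normalisation constant at the vertex requires genuine computation.
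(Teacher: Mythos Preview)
Your approach is sound and genuinely different from the paper's. You differentiate by iterating a Minkowski-type boundary rule $\partial_{h_i} I_f(\Delta)=\int_{F_i} f\,d\sigma_i$, peeling off one facet at a time; the paper instead expresses $\partial_I$ as a limit of iterated finite differences, interprets the resulting alternating sum $\sum_{J\subseteq I}(-1)^{|J|}\mathds{1}_{\Delta(J)}$ as a single convex chain, and proves a combinatorial lemma identifying this chain with either $0$ or the indicator of a half-open parallelepiped $A+\Pi(\lambda_{i_1},\ldots,\lambda_{i_n})$ at the vertex. The determinant then drops out in one stroke as the Jacobian of the affine map $(x_1,\ldots,x_n)\mapsto A+\sum x_j e_{i_j}$. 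Your route makes the vanishing statement completely transparent (an empty face intersection kills the integral, no inclusion--exclusion needed), whereas the paper must argue the cancellation of the chain. Conversely, the paper handles the lattice factor painlessly, while in your scheme it is, as you correctly flag, the real work: at step $s$ the image of $e_{i_{s+1}}$ in $N/\langle e_{i_1},\ldots,e_{i_s}\rangle$ need not be primitive when $\Sigma$ is merely simplicial, and the accumulated index factors must be shown to telescope to $[N:\Z e_{i_1}\oplus\cdots\oplus\Z e_{i_n}]=|\det(e_{i_1},\ldots,e_{i_n})|$. This is a standard sublattice-index identity, so your plan does go through; note also that in the smooth case (which is all the paper actually needs downstream) every such image \emph{is} primitive and the factor is $1$, so the bookkeeping you anticipate disappears entirely there.
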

Here $\det(e_{i_1}, \ldots, e_{i_n})$ denotes the determinant of the matrix whose $j$-th column is given by the vector $e_{i_j}$.
Furthermore, $\partial_I$ denotes the partial derivative $\partial / \partial h_{i_1} \cdots \partial / \partial h_{i_n}$ along the coordinate vectors.

The proof of Lemma~\ref{Ider} relies on a folklore result on convex chains.
We continue to use the notation from above.
Fix real numbers $\{ \lambda_i \}_{i \in I}$ and define the virtual polytope $h + \sum_{i \in I}\lambda_i h_i$ where $h$ denotes the support function of the polytope $\Delta$.
If $\{ \lambda_i \}_{i \in I}$ are sufficiently small, the virtual polytope $h + \sum_{i \in I} \lambda_i h_i$ is in the interior of $\Pm^+_\Sigma$ in which case it corresponds to the polytope obtained from $\Delta$ by moving the facets corresponding to the rays $\rho_{i_j}$ according to the coefficients $\lambda_{i_j}$.
For any subset $J \subseteq I$ we introduce the virtual polytope $\Delta(J)\coloneqq h + \sum_{j \in J} \lambda_j h_j$.
Furthermore, we  define the convex chain
\[
  \sigma(\lambda_{i_1}, \ldots, \lambda_{i_r}) = \sum_{J \subseteq I} (-1)^{r+|I_-|+|J|} \cdot \mathds{1}_{\Delta(J)} \qquad \text{where $I_- \coloneqq \{ i \in I \colon \lambda_i<0\}$.}
\]
The following folklore result will be needed in the proof of Lemma~\ref{Ider}.
\begin{proposition}
  \label{prop:cc}

  Let $I = \{i_1, \ldots, i_r\} \subseteq \{ 1, \ldots, s\}$ be a subset and let $\{ \lambda_i\}_{i \in I}$ be sufficiently small real numbers.
  If any  $\lambda_i = 0$ for $i \in I$ or $\rho_{i_1}, \ldots, \rho_{i_r}$ do not span a cone in $\Sigma$, then $\sigma(\lambda_{i_1}, \ldots, \lambda_{i_r}) = 0$.  
  If all $\lambda_i \neq 0$ for $i \in I$, and the cardinality of $I$ is $n = \dim(N_\R)$, and $\rho_{i_1}, \ldots, \rho_{i_n}$ span a cone in $\Sigma$ dual to the vertex $A \in \Delta$, then
  \[
     \sigma(\lambda_{i_1}, \ldots, \lambda_{i_n}) =  \mathds{1}_{A + \Pi(\lambda_{i_1}, \ldots, \lambda_{i_n})} \text{.}
  \]
\end{proposition}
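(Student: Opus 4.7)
The plan is to interpret $\sigma$ as an iterated discrete difference in the algebra of convex chains and then localize the resulting chain to vertices of $\Delta$. Define operators on convex chains by
\[
  D_{i_j}\mathds{1}_P := \mathds{1}_{P+\lambda_{i_j}h_{i_j}}-\mathds{1}_P,
\]
which is well defined since the $\lambda_{i_j}$ are small and $\Delta$ lies in the interior of $\Pm^+_\Sigma$, so that $P+\lambda_{i_j}h_{i_j}$ remains a genuine polytope for every $P=\Delta(J)$ appearing below. Expanding the composition $D_{i_r}\circ\cdots\circ D_{i_1}\mathds{1}_\Delta$ over the $2^r$ sign choices in each factor and comparing with the defining sum for $\sigma$ yields the key reformulation
\[
  \sigma(\lambda_{i_1},\ldots,\lambda_{i_r}) = (-1)^{|I_-|}\,D_{i_r}\circ\cdots\circ D_{i_1}\,\mathds{1}_\Delta.
\]
From this, the vanishing assertion when some $\lambda_{i_j}=0$ is immediate, since the corresponding $D_{i_j}$ is the zero operator and kills the composition.

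For the remaining part of the first assertion, that $\sigma=0$ when $\rho_{i_1},\ldots,\rho_{i_r}$ do not span a cone of $\Sigma$, I would argue by a support-localization argument. Each single difference $D_{i_j}\mathds{1}_P$ is a signed characteristic function of a ``slab'' concentrated along the facet $F_{i_j}$ of $P$, of thickness $O(|\lambda_{i_j}|)$. Composing several such operators concentrates the support of the resulting chain in a neighborhood of $F_{i_1}\cap\cdots\cap F_{i_r}$. Since $\Sigma$ is simplicial, facets $F_{i_1},\ldots,F_{i_r}$ of $\Delta$ meet if and only if the corresponding rays span a cone of $\Sigma$; under our hypothesis these facets are therefore separated by a positive distance, and for sufficiently small $\lambda_{i_j}$ the slabs are pairwise disjoint, forcing $\sigma=0$.

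For the nonvanishing case $r=n$ with the rays spanning a cone dual to a vertex $A$, I would localize at $A$ and reduce everything to a product of one-dimensional calculations. Near $A$, the polytope $\Delta$ coincides (as a germ) with its tangent cone $A+C_A$, where $C_A=\{x\in M_\R:\langle x,e_{i_j}\rangle\le 0,\; j=1,\ldots,n\}$. Using the dual basis $e_{i_1}^*,\ldots,e_{i_n}^*$ of $e_{i_1},\ldots,e_{i_n}$ as coordinates centered at $A$ turns $C_A$ into the standard negative orthant, and the perturbation $h\mapsto h+\sum_{j\in J}\lambda_{i_j}h_{i_j}$ simply shifts the $e_{i_j}^*$-coordinate of the vertex by $\lambda_{i_j}$ when $j\in J$. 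The iterated difference then factorizes as a product of $n$ one-dimensional differences of characteristic functions of half-lines; each factor contributes $+\mathds{1}_{(0,\lambda_{i_j}]}$ if $\lambda_{i_j}>0$ and $-\mathds{1}_{(\lambda_{i_j},0]}$ if $\lambda_{i_j}<0$, so the product equals $(-1)^{|I_-|}\mathds{1}_{A+\Pi(\lambda_{i_1},\ldots,\lambda_{i_n})}$. Combined with the outer sign $(-1)^{|I_-|}$ from the reformulation of $\sigma$, one obtains exactly $\mathds{1}_{A+\Pi(\lambda_{i_1},\ldots,\lambda_{i_n})}$, with $\Pi$ denoting the parallelepiped spanned by the vectors $\lambda_{i_j}e_{i_j}^*$.

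The main obstacle will be giving a careful, uniform treatment of the support-localization step: one has to verify that the slab description of $D_{i_j}\mathds{1}_P$ is controlled uniformly across all intermediate polytopes $P=\Delta(J)$ that enter the expansion, and that the $\lambda_{i_j}$ can be chosen small enough that the combinatorial structure of $\Delta$ is not disturbed away from the relevant face. Once this is in place, the two claims follow directly from the local calculation sketched above.
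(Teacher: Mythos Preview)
Your approach is correct and closely parallels the paper's argument, though organized differently. The paper first reduces to the case $\lambda_i>0$ by shifting the base polytope, then evaluates $\sigma$ pointwise: for $x\in\Delta(I)$ it sets $\mathcal{I}(x)=\{i\in I:\langle x,e_i\rangle>c_i\}$, observes $\mathds{1}_{\Delta(J)}(x)=1$ iff $\mathcal{I}(x)\subseteq J$, and sums over $J\supseteq\mathcal{I}(x)$ to obtain $\sigma(x)=\mathds{1}_{\{\mathcal{I}(x)=I\}}$; the region $\{\mathcal{I}(x)=I\}$ is then identified with the parallelepiped (or shown to be empty). Your iterated-difference reformulation, support localization, and one-dimensional factorization at the vertex amount to the same computation seen through a different lens; in particular your $(-1)^{|I_-|}$ bookkeeping cleanly avoids the paper's preliminary reduction, and the factorization makes the half-open boundary conventions transparent.

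One correction in your vanishing argument: you write that the facets ``are separated by a positive distance'' and that ``the slabs are pairwise disjoint''. Neither is what you need, and neither is true in general (three faces of a prism can pairwise meet while having empty common intersection). What you do need, and what you correctly stated one sentence earlier, is that the support of the iterated difference lies in $\bigcap_j(\text{slab near }F_{i_j})$, and that this intersection is empty for small $\lambda$ whenever $\bigcap_j F_{i_j}=\emptyset$. The support claim follows by the pairing $J\leftrightarrow J\triangle\{i_j\}$ (membership in $\Delta(J)$ is independent of whether $i_j\in J$ once $x$ is away from $H_{i_j}$), and emptiness of the slab intersection follows from compactness: $x\mapsto\max_j d(x,F_{i_j})$ is continuous and strictly positive on $\Delta(I)$, hence bounded below. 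With that fix your sketch is complete; your ``main obstacle'' paragraph already anticipates exactly this point.
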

Here, $A+\Pi(\lambda_{i_1}, \ldots, \lambda_{i_n})$ denotes the half-open parallelepiped spanned by the vectors $\lambda_{i_1} e_{i_1}, \ldots, \lambda_{i_n} e_{i_n}$ shifted by the vertex $A$:
\[
  \Pi\left( \lambda_{i_1}, \ldots, \lambda_{i_n} \right) = \left\{ \sum_{i \in I} a_i \lambda_i e_i \colon 0 < a_i \le 1 \; \text{if} \; \lambda_i >0 \; \text{and} \; 0 \le a_i <1 \; \text{if} \; \lambda_i <0 \right\} \text{.}
\] 
See Figures~\ref{fig1} and~\ref{fig2} for an illustration of Lemma~\ref{prop:cc} in dimension $2$.
\begin{figure}[ht!]
\centering
\begin{tikzcd}[column sep=tiny, row sep=tiny]
\parbox{2.25cm}{\begin{tikzpicture}[scale=.5]
  \draw[thick] (1,-.5) -- (2,-.75) -- (2,2.5) -- (1,3);
  \draw[thick] (2,2.5) -- (2,3.5) -- (-.6,4.8) -- (-1,4);
  \draw[thick] (1,3) -- (1,4);
  \fill[fill=gray!25] (-1,0) -- (-2,2) -- (-1,4) -- (1,3) -- (1,-.5) -- cycle;
  \draw[thick] (-1,0) -- (-2,2) -- (-1,4) -- (1,3) -- (1,-.5) -- cycle;
  \fill[fill=gray,draw=black,thin] (-1,0) circle (5pt);
  \fill[fill=gray,draw=black,thin] (-2,2) circle (5pt);
  \fill[fill=gray,draw=black,thin] (-1,4) circle (5pt);
  \fill[fill=gray,draw=black,thin] (1,3) circle (5pt);
  \fill[fill=gray,draw=black,thin] (1,-.5) circle (5pt);
  \fill (2,2.5) circle (4pt);
  \fill (2,-.75) circle (4pt);
  \fill (1,4) circle (4pt);
  \fill (-.6,4.8) circle (4pt);
  \fill (2,3.5) circle (4pt);
\end{tikzpicture}} & - &
\parbox{2.25cm}{\begin{tikzpicture}[scale=.5]
  \draw[thick] (1,-.5) -- (2,-.75) -- (2,2.5) -- (1,3);
  \draw[draw=none,thick] (2,2.5) -- (2,3.5) -- (-.6,4.8) -- (-1,4);
  \draw[draw=none,thick] (1,3) -- (1,4);
  \fill[fill=gray!25] (-1,0) -- (-2,2) -- (-1,4) -- (1,3) -- (1,-.5) -- cycle;
  \draw[thick] (-1,0) -- (-2,2) -- (-1,4) -- (1,3) -- (1,-.5) -- cycle;
  \fill[fill=gray,draw=black,thin] (-1,0) circle (5pt);
  \fill[fill=gray,draw=black,thin] (-2,2) circle (5pt);
  \fill[fill=gray,draw=black,thin] (-1,4) circle (5pt);
  \fill[fill=gray,draw=black,thin] (1,3) circle (5pt);
  \fill[fill=gray,draw=black,thin] (1,-.5) circle (5pt);
  \fill (2,2.5) circle (4pt);
  \fill (2,-.75) circle (4pt);
  \fill[fill=none,draw=none] (1,4) circle (4pt);
  \fill[fill=none,draw=none] (-.6,4.8) circle (4pt);
  \fill[fill=none,draw=none] (2,3.5) circle (4pt);
\end{tikzpicture}} & - &
\parbox{2cm}{\begin{tikzpicture}[scale=.5]
  \draw[thick,draw=none] (1,-.5) -- (2,-.75) -- (2,2.5) -- (1,3);
  \draw[thick,draw=none] (2,2.5) -- (2,3.5) -- (-.6,4.8) -- (-1,4);
  \draw[thick] (1,3) -- (1,4) -- (-.6,4.8) -- (-1,4);
  \fill[fill=gray!25] (-1,0) -- (-2,2) -- (-1,4) -- (1,3) -- (1,-.5) -- cycle;
  \draw[thick] (-1,0) -- (-2,2) -- (-1,4) -- (1,3) -- (1,-.5) -- cycle;
  \fill[fill=gray,draw=black,thin] (-1,0) circle (5pt);
  \fill[fill=gray,draw=black,thin] (-2,2) circle (5pt);
  \fill[fill=gray,draw=black,thin] (-1,4) circle (5pt);
  \fill[fill=gray,draw=black,thin] (1,3) circle (5pt);
  \fill[fill=gray,draw=black,thin] (1,-.5) circle (5pt);
  \fill[fill=none,draw=none] (2,2.5) circle (4pt);
  \fill[fill=none,draw=none] (2,-.75) circle (4pt);
  \fill (1,4) circle (4pt);
  \fill (-.6,4.8) circle (4pt);
  \fill[fill=none,draw=none] (2,3.5) circle (4pt);
\end{tikzpicture}} & + &
\parbox{2cm}{\begin{tikzpicture}[scale=.5]
  \draw[thick,draw=none] (1,-.5) -- (2,-.75) -- (2,2.5) -- (1,3);
  \draw[thick,draw=none] (2,2.5) -- (2,3.5) -- (-.6,4.8) -- (-1,4);
  \draw[thick,draw=none] (1,3) -- (1,4);
  \fill[fill=gray!25] (-1,0) -- (-2,2) -- (-1,4) -- (1,3) -- (1,-.5) -- cycle;
  \draw[thick] (-1,0) -- (-2,2) -- (-1,4) -- (1,3) -- (1,-.5) -- cycle;
  \fill[fill=gray,draw=black,thin] (-1,0) circle (5pt);
  \fill[fill=gray,draw=black,thin] (-2,2) circle (5pt);
  \fill[fill=gray,draw=black,thin] (-1,4) circle (5pt);
  \fill[fill=gray,draw=black,thin] (1,3) circle (5pt);
  \fill[fill=gray,draw=black,thin] (1,-.5) circle (5pt);
  \fill[fill=none,draw=none] (2,2.5) circle (4pt);
  \fill[fill=none,draw=none] (2,-.75) circle (4pt);
  \fill[fill=none,draw=none] (1,4) circle (4pt);
  \fill[fill=none,draw=none] (-.6,4.8) circle (4pt);
  \fill[fill=none,draw=none] (2,3.5) circle (4pt);
\end{tikzpicture}} & = &
\parbox{.75cm}{\begin{tikzpicture}[scale=.5]
  \draw[thick,dashed] (0,3) -- (0,4);
  \draw[thick] (0,4) -- (1,3.5);
  \draw[thick] (1,3.5) -- (1,2.5);
  \draw[thick,dashed] (0,3) -- (1,2.5);
  \fill (1,2.5) circle (4pt);
  \fill (1,3.5) circle (4pt);
  \fill (0,4) circle (4pt);
  \fill[fill=gray,draw=black,thin] (0,3) circle (5pt);
  \fill[fill=none,draw=none] (0,4.8) circle (4pt);
  \fill[fill=none,draw=none] (0,-.75) circle (4pt);
\end{tikzpicture}}\\
\Delta + \lambda_1 x_1 + \lambda_2 x_2 && \Delta + \lambda_1 x_1 && \Delta + \lambda_2 x_2 && \Delta
\end{tikzcd}
\caption{Alternating sum of characteristic functions for two adjacent edges.}
\label{fig1}
\begin{tikzcd}[column sep=tiny, row sep=tiny]
\parbox{2.25cm}{\begin{tikzpicture}[scale=.5]
  \draw[thick] (1,-.5) -- (2,-.75) -- (2,2.5) -- (1,3);
  \draw[thick] (-2,2) -- (-2.5,3) -- (-1.8,4.4) -- (-1,4);
  \fill[fill=gray!25] (-1,0) -- (-2,2) -- (-1,4) -- (1,3) -- (1,-.5) -- cycle;
  \draw[thick] (-1,0) -- (-2,2) -- (-1,4) -- (1,3) -- (1,-.5) -- cycle;
  \fill[fill=gray,draw=black,thin] (-1,0) circle (5pt);
  \fill[fill=gray,draw=black,thin] (-2,2) circle (5pt);
  \fill[fill=gray,draw=black,thin] (-1,4) circle (5pt);
  \fill[fill=gray,draw=black,thin] (1,3) circle (5pt);
  \fill[fill=gray,draw=black,thin] (1,-.5) circle (5pt);
  \fill (2,2.5) circle (4pt);
  \fill (2,-.75) circle (4pt);
  \fill (-2.5,3) circle (4pt);
  \fill (-1.8,4.4) circle (4pt);
\end{tikzpicture}}& - &
\parbox{2cm}{\begin{tikzpicture}[scale=.5]
  \draw[thick] (1,-.5) -- (2,-.75) -- (2,2.5) -- (1,3);
  \fill[fill=gray!25] (-1,0) -- (-2,2) -- (-1,4) -- (1,3) -- (1,-.5) -- cycle;
  \draw[thick] (-1,0) -- (-2,2) -- (-1,4) -- (1,3) -- (1,-.5) -- cycle;
  \fill[fill=gray,draw=black,thin] (-1,0) circle (5pt);
  \fill[fill=gray,draw=black,thin] (-2,2) circle (5pt);
  \fill[fill=gray,draw=black,thin] (-1,4) circle (5pt);
  \fill[fill=gray,draw=black,thin] (1,3) circle (5pt);
  \fill[fill=gray,draw=black,thin] (1,-.5) circle (5pt);
  \fill (2,2.5) circle (4pt);
  \fill (2,-.75) circle (4pt);
  \fill[fill=none,draw=none] (0,4.4) circle (4pt);
\end{tikzpicture}}& - &
\parbox{1.75cm}{\begin{tikzpicture}[scale=.5]
  \draw[thick] (-2,2) -- (-2.5,3) -- (-1.8,4.4) -- (-1,4);
  \fill[fill=gray!25] (-1,0) -- (-2,2) -- (-1,4) -- (1,3) -- (1,-.5) -- cycle;
  \draw[thick] (-1,0) -- (-2,2) -- (-1,4) -- (1,3) -- (1,-.5) -- cycle;
  \fill[fill=gray,draw=black,thin] (-1,0) circle (5pt);
  \fill[fill=gray,draw=black,thin] (-2,2) circle (5pt);
  \fill[fill=gray,draw=black,thin] (-1,4) circle (5pt);
  \fill[fill=gray,draw=black,thin] (1,3) circle (5pt);
  \fill[fill=gray,draw=black,thin] (1,-.5) circle (5pt);
  \fill[fill=none,draw=none] (0,-.75) circle (4pt);
  \fill (-2.5,3) circle (4pt);
  \fill (-1.8,4.4) circle (4pt);
\end{tikzpicture}}& + &
\parbox{1.55cm}{\begin{tikzpicture}[scale=.5]
  \fill[fill=gray!25] (-1,0) -- (-2,2) -- (-1,4) -- (1,3) -- (1,-.5) -- cycle;
  \draw[thick] (-1,0) -- (-2,2) -- (-1,4) -- (1,3) -- (1,-.5) -- cycle;
  \fill[fill=gray,draw=black,thin] (-1,0) circle (5pt);
  \fill[fill=gray,draw=black,thin] (-2,2) circle (5pt);
  \fill[fill=gray,draw=black,thin] (-1,4) circle (5pt);
  \fill[fill=gray,draw=black,thin] (1,3) circle (5pt);
  \fill[fill=gray,draw=black,thin] (1,-.5) circle (5pt);
  \fill[fill=none,draw=none] (0,-.75) circle (4pt);
  \fill[fill=none,draw=none] (0,4.4) circle (4pt);
\end{tikzpicture}}& = & 0\\
\Delta + \lambda_1 x_1 + \lambda_3 x_3 && \Delta + \lambda_1 x_1 && \Delta + \lambda_3 x_3 && \Delta
\end{tikzcd}
\caption{Alternating sum of characteristic functions for two disjoint edges.}
\label{fig2}
\end{figure}

\begin{proof}[Sketch of proof]
  For the reader's convenience we include a sketch of proof.

  It is straightforward to check that $\sigma(\lambda_{i_1}, \ldots, \lambda_{i_r})$ vanishes if any $\lambda_i=0$ for $i \in I$.
  Hence suppose no $\lambda_i$ vanishes for $i \in I$.
  It turns out to be more convenient to work with the case that all $\lambda_i>0$ for $i \in I$.
  We can always reduce to this case as follows.
  Let $I_- \coloneqq \{ i \in I \colon \lambda_i<0\}$ and set $\Delta' = h + \sum_{i \in I_-} \lambda_i h_i$.
  Let $h'$ be the support function of $\Delta'$ and define $\Delta'(J) = h' + \sum_{j \in J} |\lambda_j| h_j$ for any subset $J \subseteq I$.
  Consider the convex chain
  \[
    \sigma'(|\lambda_{i_1}|, \ldots, |\lambda_{i_r}|) = \sum_{J \subseteq I} (-1)^{r+|J|} \mathds{1}_{\Delta'(J)}\text{.}
  \]
  It is straightforward to verify that $\sigma(\lambda_{i_1}, \ldots, \lambda_{i_r}) = (-1)^{|I_-|}  \cdot \sigma'(|\lambda_{i_1}|, \ldots, |\lambda_{i_r}|)$, and thus we may assume from now on that $\lambda_i>0$ for every $i \in I$.

  Suppose that the facet $F_i$ of $\Delta$ corresponding to the ray $\rho_i \in \Sigma$ is given by the supporting hyperplane $H_i = \{ x \in M_\R \colon \langle x, e_i \rangle = c_i\}$ for some $c_i \in \R$.
  Note that the polytope $\Delta(I)$ is obtained from $\Delta$ by moving the supporting hyperplanes $H_i$ for $i \in I$ outwards.
  We say that a point $x \in \Delta(I)$ is \emph{beyond the hyperplane $H_i$} for $i \in I$ if $\langle x, e_i \rangle >c_i$.
  We get a map from the points of $\Delta(I)$ to the powerset $2^I$ of $I$:
  \[
    \Delta(I) \to 2^I; x \mapsto \mathcal{I}(x) \coloneqq \{ i \in I \colon \langle x, e_i \rangle >c_i\} \text{.}
  \]
  For $J \subseteq I$ define the region $\mathcal{C}(J)$ as the set of points $x \in \Delta(I)$ where  $\mathcal{I}(x) = J$ is constant.
  The regions $\mathcal{C}(J)$ form a (disjoint) decomposition of $\Delta(I)$ into (possibly half-open) polytopes.
  Points in these regions behave well with respect to the characteristic functions $\mathds{1}_{\Delta(J)}$ which appear in the convex chain $\sigma(\lambda_{i_1}, \ldots, \lambda_{i_r})$:
  \[
    \mathds{1}_{\Delta(J)} = \begin{cases}
      1 & \text{if $\mathcal{I}(x) \subseteq J$,}\\
      0 & \text{otherwise}
    \end{cases} \qquad \text{for $x \in \Delta(I)$ and $J \subseteq I$.}
  \]
  For $x \in \Delta(I)$, a straightforward computation yields 
  \[
    \sigma(\lambda_{i_1}, \ldots, \lambda_{i_r})(x) = \begin{cases}
      0 & \text{if $|\mathcal{I}(x)|<r$,}\\
      (-1)^r & \text{if $|\mathcal{I}(x)|=r$.}
    \end{cases}
  \]
  Hence, $\sigma(\lambda_{i_1}, \ldots, \lambda_{i_r}) = (-1)^r \cdot \mathds{1}_{\mathcal{C}(I)}$.
  It is straightforward to show that $\mathcal{C}(I)=\emptyset$ if $\rho_{i_1}, \ldots, \rho_{i_r}$ do not span a cone in $\Sigma$.
  If $|I| = n = \dim( N_\R)$ and $\rho_{i_1}, \ldots, \rho_{i_n}$ span a cone in $\Sigma$ dual to the vertex $A \in \Delta$, then a straightforward computation shows that $\mathcal{C}(I) = A + \Pi(\lambda_{i_1}, \ldots, \lambda_{i_n})$.
\end{proof}

\begin{proof}[Proof of Lemma~\ref{Ider}]
  To keep notation simple assume $\rho_{i_j} = \rho_{j}$.
  
  For a monomial $\partial_1^{k_1} \cdots\partial_r^{k_r}$, let $\partial_I = \partial_1 \cdots \partial_r$ be the corresponding square free monomial.
  It is enough to show that $\partial_I F_\gamma(\Delta)  = 0$ in order  to prove that $\partial_1^{k_1} \cdots \partial_r^{k_r} F_\gamma(\Delta)  = 0$.

  For a smooth function $g$ on $\Pm_\Sigma$, the partial derivative $\partial_I g(x)$ can be expressed as
  \begin{equation}
    \label{difdiff}
    \partial_I g(x) = \lim_{\lambda_1, \ldots, \lambda_r\to 0}\rleft( \frac{1}{\lambda_1 \cdots \lambda_r} D_{\lambda_1h_1}\ldots D_{\lambda_rh_r} g(x) \rright) \text{,}
  \end{equation}
  where $D_{\lambda_i h_i}$ denotes the difference operator introduced above and $\lambda_1, \ldots, \lambda_r \in \R$.
  
  Applying equality~(\ref{difdiff}) to the function $I_f|_{\Pm_\Sigma}$ one gets 
  \[
    \partial_I  I_f(\Delta) = \lim_{\lambda_1,\ldots,\lambda_r\to 0} \frac{1}{\lambda_1 \cdots \lambda_r} \int_{M_\R} f(x) \cdot (-1)^{|I_-|} \cdot \sigma(\lambda_1, \ldots, \lambda_r)(x) \diff \mu
  \]
  where $\sigma(\lambda_1, \ldots, \lambda_r)$ is the convex chain defined above and $I_- = \{ i = 1, \ldots, r \colon \lambda_i <0\}$.
  By Proposition~\ref{prop:cc}, the convex chain $\sigma(\lambda_1, \ldots, \lambda_r)$ vanishes if $\rho_1, \ldots, \rho_r$ do not form a cone in $\Sigma$.
  By the same proposition, if $r = n$ and $\rho_1, \ldots, \rho_n$ generate a cone in $\Sigma$ dual to the vertex $A \in \Delta$, it is equal to the characteristic function $\mathds{1}_{A+\Pi(\lambda_1,\ldots,\lambda_n)}$ of the half-open parallelepiped spanned by the vectors $\lambda_1 e_1,\ldots,\lambda_n e_n$ at the point $A$.
  In this latter case, i.e. $\rho_1, \ldots, \rho_n$ span a cone in $\Sigma$, let $T \colon \R^n \to M_\R$ be the diffeomorphism given by $T(x_1, \ldots, x_n) = x_1 e_1 + \ldots + x_n e_n +A$.
  Then by a straightforward calculation, we obtain:
  \[
    \int_{A + \Pi(\lambda_1, \ldots, \lambda_n)} f(x) \diff\mu = (-1)^{|I_-|} \cdot \int_0^{\lambda_1} \cdots \int_0^{\lambda_n} f(T(x_1, \ldots, x_n)) \cdot |\det(e_1, \ldots, e_n)| \diff x_1 \cdots \diff x_n \text{,}
  \]
  where $\det(e_1, \ldots, e_n)$ is the determinant of the Jacobian matrix of $T$.
  So the lemma follows by the fundamental theorem of calculus:
  \[
    \lim_{\lambda_1,\ldots,\lambda_n\to 0} \frac{1}{\lambda_1\ldots\lambda_n} \int_{A+\Pi(\lambda_1 e_1,\ldots,\lambda_n e_n)} (-1)^{|I_-|} \cdot f(x) \diff \mu = f(A) \cdot |\det (e_1,\ldots, e_n)| \text{.}\qedhere
  \]
\end{proof}

%%%%%%%%%%%%%%%%%%%%%%%%%%%%%%%%%%%%%%%%%%%%%%%%%%%%%%%%%%%%%%%%%

\subsection{\texorpdfstring{Differentiation of $F_\gamma$}{Differentiation of F-gamma}}

Next we verify the base case of square free differential monomials for $F_\gamma$:

\begin{lemma}
  \label{Fder}
  Let $I = \{ i_1, \ldots, i_r\} \subseteq \{ 1, \ldots, s \}$ be a subset and $k_1, \ldots, k_r$ positive integers.
  If $\Delta$ is a polytope in the interior of $\Pm_\Sigma^+$ such that $\rho_{i_1},\ldots,\rho_{i_r}$ do not span a cone in $\Sigma$, we have
  \[
    \partial_{i_1}^{k_1} \cdots \partial_{i_r}^{k_r} \left(F_\gamma|_{\Pm_\Sigma}\right)(\Delta) = 0 \text{.}
  \]
  If $r = n = \dim(N_\R)$ and $\rho_{i_1}, \ldots, \rho_{i_n}$ span a cone in $\Sigma$ dual to the vertex $A\in \Delta$, we have 
  \[
    \partial_I \left(F_\gamma|_{\Pm_\Sigma}\right)(\Delta) = \frac{(n+i)!}{i!}f_\gamma(A)\text{.}
  \]
\end{lemma}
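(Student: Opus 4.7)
The plan is to compute the partial derivatives of $F_\gamma$ directly from its definition and reduce both assertions to geometric facts about intersections of the divisors $D_j$ inside $E_X$. In the coordinates $h_1,\ldots,h_s$ on $\Pm_\Sigma$ we may write
\[
  F_\gamma(h) = \int_{E_X}\Bigl(\sum_{l=1}^s h_l [D_l]\Bigr)^{n+i} \cdot p^*(\gamma)
\]
as a genuine polynomial of degree $n+i$ in the variables $h_l$. Applying the ordinary power rule iteratively to this expression gives the compact formula
\[
  \partial_{i_1}^{k_1} \cdots \partial_{i_r}^{k_r} F_\gamma(h) = \frac{(n+i)!}{(n+i-k)!} \int_{E_X} [D_{i_1}]^{k_1} \cdots [D_{i_r}]^{k_r} \cdot \rho(h)^{n+i-k} \cdot p^*(\gamma),
\]
where $k = k_1 + \ldots + k_r$. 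This closed form reduces both parts of the lemma to computations inside $H^*(E_X,\R)$.

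For the vanishing statement, I would argue that if $\rho_{i_1},\ldots,\rho_{i_r}$ do not span a cone in $\Sigma$, then the corresponding $T$-orbit closures $\overline{O}_{i_l}$ in $X$ have empty intersection, and so do their associated submanifolds $D_{i_l} = E \times_T \overline{O}_{i_l}$ in $E_X$. The local product structure of $E_X$ together with the smoothness of $\Sigma$ ensures that the $D_{i_l}$'s meet transversally (locally they look like a product of $B$ with transverse coordinate hyperplanes of an affine chart of $X$), so the cohomological product $[D_{i_1}]\cdots[D_{i_r}]$ is the Poincar\'e dual of their empty intersection, hence zero. Multiplying by the remaining powers $[D_{i_l}]^{k_l-1}$ then shows that the whole class $[D_{i_1}]^{k_1}\cdots[D_{i_r}]^{k_r}$ already vanishes in $H^*(E_X,\R)$, so the displayed integral vanishes identically.

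For the square-free formula with $r=n$ and $\rho_{i_1},\ldots,\rho_{i_n}$ spanning a cone $\sigma$ dual to the vertex $A \in \Delta$, I would use that $\overline{O}_{i_1} \cap \cdots \cap \overline{O}_{i_n} = \{x\}$ is the single $T$-fixed point of $X$ corresponding to $\sigma$, so that $D_{i_1} \cap \cdots \cap D_{i_n} = E \times_T \{x\}$ is a section $S$ of $p \colon E_X \to B$ with associated map $s \colon B \to E_X$. The same transversality argument now gives $[D_{i_1}]\cdots[D_{i_n}] = [S]$, and then the projection formula (together with $s^* p^* = \mathrm{id}$) reduces the derivative to
\[
  \partial_{i_1} \cdots \partial_{i_n} F_\gamma(\Delta) = \frac{(n+i)!}{i!} \int_B \bigl(s^*\rho(\Delta)\bigr)^i \cdot \gamma.
\]

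The last step is to identify $s^*\rho(\Delta) = c(A)$. Since $p \circ s = \mathrm{id}_B$, Proposition~\ref{cherneq} applied to any $\lambda \in M_\R$ gives $c(\lambda) = s^* p^* c(\lambda) = s^*\rho(\lambda) = \sum_j \langle e_j, \lambda\rangle \, s^*[D_j]$. For $j \notin \{i_1,\ldots,i_n\}$ the fixed point $x$ does not lie on $\overline{O}_j$, so $S \cap D_j = \emptyset$ and $s^*[D_j] = 0$. Since $\{e_{i_1},\ldots,e_{i_n}\}$ is a $\Z$-basis of $N$ by smoothness of $\Sigma$, its dual basis $\{\lambda_1,\ldots,\lambda_n\} \subseteq M$ then satisfies $c(\lambda_l) = s^*[D_{i_l}]$. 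Using $h_\Delta(e_{i_l}) = \langle e_{i_l}, A\rangle$ (because $A$ is the vertex of $\Delta$ dual to $\sigma$) and $A = \sum_l \langle e_{i_l}, A\rangle \lambda_l$ in the dual basis, one finds
\[
  s^*\rho(\Delta) = \sum_{l=1}^n h_\Delta(e_{i_l}) \cdot c(\lambda_l) = c\Bigl(\sum_l \langle e_{i_l}, A\rangle \lambda_l\Bigr) = c(A),
\]
and substituting this into the integral above yields $\frac{(n+i)!}{i!}\int_B c(A)^i \cdot \gamma = \frac{(n+i)!}{i!} f_\gamma(A)$, as desired. The main obstacle is justifying the transversality underlying $[D_{i_1}]\cdots[D_{i_n}] = [S]$ (and its analogue for the vanishing claim): this requires either a careful local trivialization argument exploiting the smoothness of $\Sigma$, or an appeal to the representability of intersection products of Poincar\'e duals by transverse perturbations.
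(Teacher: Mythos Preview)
Your proof is correct and follows essentially the same architecture as the paper's: both compute the derivative via the multinomial expansion to obtain $\partial_I F_\gamma(\Delta) = \tfrac{(n+i)!}{i!}\,\rho(\Delta)^i\cdot[D_{i_1}]\cdots[D_{i_n}]\cdot p^*(\gamma)$, invoke transversality of the $D_{i_l}$ (from smoothness of $\Sigma$) for the vanishing statement, and identify $[D_{i_1}]\cdots[D_{i_n}]$ with the section $E_A$ over $B$. The one genuine difference is in the last identification. The paper translates the polytope, setting $\widetilde\Delta=\Delta-A$ so that $h_{\widetilde\Delta}(e_{i_l})=0$; then $\rho(\widetilde\Delta)\cdot[D_{i_1}]\cdots[D_{i_n}]=0$ by the first part (any surviving term involves $n{+}1$ distinct rays), whence $\rho(\Delta)^i\cdot E_A=\rho(A)^i\cdot E_A$, and Proposition~\ref{cherneq} gives $\rho(A)=p^*c(A)$. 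You instead pull back along the section $s$, use $S\cap D_j=\emptyset$ to kill $s^*[D_j]$ for $j\notin\{i_1,\ldots,i_n\}$, and solve for $s^*[D_{i_l}]=c(\lambda_l)$ via the dual basis. These are two views of the same geometric fact (the extraneous $D_j$ miss the fixed-point section), and your route has the mild advantage of avoiding the auxiliary translated polytope at the cost of invoking the projection formula explicitly. The transversality justification you flag as an obstacle is exactly what the paper assumes as well, citing only smoothness of $\Sigma$; your local-trivialization sketch is adequate for this.
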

\begin{proof}
  To keep notation simple assume $i_j = j$ for $j = 1, \ldots, r$.
  
  For a monomial $\partial_1^{k_1} \cdots\partial_r^{k_r}$, let $\partial_I = \partial_1 \cdots \partial_r$ be the corresponding square free monomial.
  It is enough to show that $\partial_I F_\gamma(\Delta)  = 0$ in order  to prove that $\partial_1^{k_1} \cdots \partial_r^{k_r} F_\gamma(\Delta)  = 0$.

  We compute the expansion of the polynomial $F_\gamma$ at $\Delta$.
  This amounts to expressing $F_\gamma(\Delta+ \sum_{i=1}^s \lambda_i h_i)$ in terms of the monomials $\lambda_1^{\alpha_1} \cdots \lambda_s^{\alpha_s}$ for non-negative integers $\alpha_1, \ldots, \alpha_s$.
  A straightforward computation yields:
  \begin{align*}
    F_\gamma\left( \Delta+\sum_{i=1}^s\lambda_ih_i \right) &= \left( \rho(\Delta) + \sum_{i=1}^s \lambda_i [D_i] \right)^{n+i} \cdot p^*(\gamma)\\
    &= \sum_{\alpha_0 + \alpha_1 + \ldots + \alpha_s = n+i} \binom{n+i}{\alpha_0, \alpha_1, \ldots, \alpha_s} \cdot \rho(\Delta)^{\alpha_0} \cdot [D_1]^{\alpha_1} \cdots [D_s]^{\alpha_s} \cdot p^*(\gamma) \cdot \lambda_1^{\alpha_i} \cdots \lambda_s^{\alpha_s}\\
    &= \frac{(n+i)!}{(n+i-r)!} \cdot \rho(\Delta)^{n+i-r} \cdot [D_1] \cdots [D_r] \cdot p^*(\gamma) \cdot \lambda_1 \cdots \lambda_r + \text{(other terms)}\\
  \end{align*}
  where $\binom{n+i}{\alpha_0,\alpha_1,\ldots,\alpha_s}=\frac{(n+i)!}{\alpha_0! \cdot \alpha_1! \cdots \alpha_s!}$ denotes the usual multinomial coefficient.
  The derivative $\partial_I F_\gamma(\Delta)$ is equal to the coefficient in front of the monomial $\lambda_1 \cdots \lambda_r$ in the expression $F_\gamma(\Delta + \sum_{i=1}^s \lambda_i h_i)$:
  \[
    \partial_I F_\gamma(\Delta)= \frac{(n+i)!}{(n+i-r)!} \cdot \rho(\Delta)^{n+i-r} \cdot [D_1] \cdots [D_r] \cdot p^*(\gamma) \text{.}
  \]
  Since $\Sigma$ is a smooth fan, the divisors $D_1,\ldots, D_r$ intersect transversely in $E_X$.
  So the product $[D_1] \cdots [D_r]$ is the class Poincar\'{e} dual to the set theoretic intersection of these divisors.
  In the case that $e_1, \ldots, e_r$ do not generate a cone in $\Sigma$ the set theoretic intersection of $D_1,\ldots,D_r$ is empty, and so $\partial_I F_\gamma(\Delta)=0$. 

  For the proof of the second part, let $r = n = \dim(N_\R)$.
  We have
   \[
    \partial_I F_\gamma(\Delta)= \frac{(n+i)!}{i!} \cdot \rho(\Delta)^{i} \cdot [D_1] \cdots [D_n] \cdot p^*(\gamma) \text{.}
  \]
 
 If $e_1, \ldots, e_n$ generate a cone in $\Sigma$ dual to the vertex $A$ of $\Delta$, then $[D_1] \cdots [D_n] = E_A$, where $E_A = E \times_T A$ is the torus invariant submanifold in $E_X$ corresponding to $A$.
  In particular, the restriction of  the projection map $p\colon E_A \to B$ is a diffeomorphism.

  Let $h_{\widetilde \Delta}$ be the support function of the polytope $\widetilde \Delta =\Delta - A$ which is the translation of the polytope $\Delta$ for which the vertex $A$ is at the origin.
  Since the vertex of $\widetilde \Delta$ corresponding to $A$ is at the origin, we get
  \[
    h_{\widetilde \Delta}(e_{1}) = \ldots = h_{\widetilde \Delta}(e_{n}) = 0, \qquad \text{and so} \qquad \rho(\widetilde \Delta) = \sum_{j>n}h_{\widetilde \Delta}(e_j) \cdot [D_j].
  \]
  Hence, by the first part, we get $\rho(\widetilde \Delta) \cdot [D_1] \cdots [D_n] = 0$ as there is no cone in $\Sigma$ with more than $n$-rays.
  Therefore:
  \[
    \rho(\Delta)^i \cdot [D_1] \cdots [D_n]  \cdot p^*(\gamma)= \rho(\widetilde \Delta + A)^i \cdot [D_1] \cdots [D_n] \cdot p^*(\gamma)= \rho(A)^i  \cdot E_A \cdot p^*(\gamma) \text{.}
  \]
  By Proposition~\ref{cherneq}, $\rho(A)=p^* c(A)$.
  Since $p\colon E_A\to B$ is a diffeomorphism, we get:
  \[
    \rho(A)^i  \cdot p^*(\gamma)  \cdot E_A =(p^* c(A))^i\cdot p^*(\gamma)  \cdot E_A= c(A)^i\cdot \gamma= f_\gamma(A), 
  \]
  and therefore $\partial_I F_\gamma(\Delta) = \frac{(n+i)!}{i!}f_\gamma(A)$.
\end{proof}

The base case of the inner induction is an immediate corollary of Lemmas~\ref{Ider} and~\ref{Fder}:
\begin{corollary}[Base case of the inner induction]
  \label{sqfree}
  For any $i\le \frac{k}{2}$, any $\gamma \in H^{k-2i}(B,\R)$ and any square free differential monomial $\partial_I=\partial_{i_1}\dots\partial_{i_n}$ of order $n$ (where $I = \{ i_1, \ldots, i_n\} \subseteq \{1, \ldots, s\}$), we have:
  \[
    \partial_I \left( (n+i)!\cdot I_\gamma(\Delta)\right)= \partial_I \left( i!\cdot F_\gamma(\Delta)\right) \text{.}
  \]
\end{corollary}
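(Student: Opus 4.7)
The plan is to deduce Corollary~\ref{sqfree} as an immediate consequence of the two preceding lemmas, since Lemmas~\ref{Ider} and~\ref{Fder} already provide explicit closed-form expressions for $\partial_I$ applied to $I_\gamma$ and to $F_\gamma$ respectively. I would proceed by a simple case split mirroring the organization of those two lemmas.

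First, if the rays $\rho_{i_1}, \ldots, \rho_{i_n}$ do not span a cone of $\Sigma$, then Lemma~\ref{Ider} (applied with $f = f_\gamma$) together with Lemma~\ref{Fder} shows that both $\partial_I I_\gamma(\Delta)$ and $\partial_I F_\gamma(\Delta)$ vanish, so the identity is trivially satisfied. Since $|I| = n = \dim N_\R$, the only alternative is that $\rho_{i_1}, \ldots, \rho_{i_n}$ span a full-dimensional (and hence maximal) cone of $\Sigma$; let $A \in \Delta$ denote the corresponding dual vertex.

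In this second case, Lemma~\ref{Ider} yields
\[
  \partial_I I_\gamma(\Delta) = f_\gamma(A) \cdot |\det(e_{i_1}, \ldots, e_{i_n})|,
\]
while Lemma~\ref{Fder} yields $\partial_I F_\gamma(\Delta) = \tfrac{(n+i)!}{i!} \cdot f_\gamma(A)$. The key observation needed to match the two expressions after multiplication by $(n+i)!$ and $i!$ respectively is that $\Sigma$ is a smooth fan, so the primitive generators $e_{i_1}, \ldots, e_{i_n}$ of any maximal cone form a $\Z$-basis of the lattice $N$. Consequently $|\det(e_{i_1}, \ldots, e_{i_n})| = 1$, and both sides reduce to $(n+i)! \cdot f_\gamma(A)$.

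There is no substantive obstacle: the analytical content was already packaged into Lemmas~\ref{Ider} and~\ref{Fder}. The only point requiring care is ensuring compatibility of conventions, namely that the Lebesgue measure on $M_\R$ is normalized with respect to the lattice $M$ (as prescribed in the definition of $I_\gamma$) so that the determinant in Lemma~\ref{Ider} is indeed measured against the dual lattice $N$, making the smoothness of $\Sigma$ the precise input needed for the factor to be $1$.
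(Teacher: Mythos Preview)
Your proposal is correct and matches the paper's approach exactly: the paper simply states that the corollary is an immediate consequence of Lemmas~\ref{Ider} and~\ref{Fder}, and you have spelled out precisely how, including the key observation that smoothness of $\Sigma$ forces $|\det(e_{i_1},\ldots,e_{i_n})|=1$.
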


%%%%%%%%%%%%%%%%%%%%%%%%%%%%%%%%%%%%%%%%%%%%%%%%%%%%%%%%%%%%%%%%%

\subsection{ The inner induction step}
In this subsection, the index set $I \subseteq \{1, \ldots, s \}$ is considered a multiset.
As induction hypothesis, suppose that $\partial_I \left( (n+i)!\cdot I_\gamma(\Delta)\right) = \partial_I \left( i!\cdot F_\gamma(\Delta) \right)$ for any differential monomial $\partial_I$ of multiplicity $m-1\ge0$.
It remains to show that the equality is true for differential monomials of multiplicity $m$.
As before, to keep notation simple assume $i_j=j$ for $j =1, \ldots, r$, so that $\partial_I=\partial_{1}^{k_1}\ldots\partial_{r}^{k_r}$ for positive integers $k_1, \ldots, k_r$.
By reordering the coordinates of $\Pm_\Sigma$, we may assume $k_1>1$. 
By Lemmas~\ref{Ider} and~\ref{Fder}, it suffices to consider the case where the vectors $e_1, \ldots, e_r$ form a cone in $\Sigma$ (as otherwise $\partial_I I_\gamma(\Delta) = 0 = \partial_I F_\gamma(\Delta)$).

The plan is to express $\partial_1$ in terms of a Lie derivative $L_v$ (for some $v \in M_\R$) and other partial derivatives.
Then the (inner) induction step will follow by an explicitly computation of $L_vI_\gamma(\Delta)$ and $L_vF_\gamma(\Delta)$.

As $e_1, \ldots, e_r$ form a cone in the smooth fan $\Sigma$, they can be completed to a basis of $N_\R$.
We take $v$ to be the first vector of the dual basis of $M_\R$.
Then $\langle v, e_1 \rangle = 1$, and  $\langle v, e_j \rangle = 0$ for $j = 2, \ldots, r$.
Since $v\in M_\R$ (considered as an element of $\Pm_\Sigma$) is given by $v = \sum_{i=1}^s \langle v, e_i \rangle h_i$, we obtain that $L_v = \sum_{i=1}^s \langle v,e_i \rangle \partial_i$, and thus $\partial_1 = L_v - \sum_{j>r}\langle v, e_j \rangle \partial_j$.
We get:
\begin{align*}
  \partial_I = \partial_1^{k_1} \ldots \partial_r^{k_r} &= \left( L_v - \sum_{j>r}\langle v, e_j \rangle \partial_j \right) \partial_1^{k_1-1} \cdot \partial_2^{k_2} \cdots \partial_r^{k_r} \\
  &= L_v \cdot \partial_1^{k_1-1} \cdot \partial_2^{k_2} \cdots \partial_r^{k_r} - \sum_{j>r}\langle v, e_j \rangle \cdot \partial_1^{k_1-1} \cdot \partial_2^{k_2} \cdots \partial_r^{k_r} \cdot \partial_j \text{.}
\end{align*}
Since $k_1>1$ and $j>r$, each monomial in the sum $\sum_{j>r}\langle v, e_j \rangle \partial_1^{k_1-1} \cdot \partial_2^{k_2} \cdots \partial_r^{k_r} \cdot \partial_j$ has multiplicity $m-1$, so by the induction hypothesis (of the inner induction), we get
\[
  (n+i)! \cdot \left( \sum_{j>r} \langle v, e_j \rangle \partial_1^{k_1-1} \cdot \partial_2^{k_2} \cdots \partial_r^{k_r} \cdot \partial_j \right) \cdot I_\gamma(\Delta) = i! \cdot \left( \sum_{j>r} \langle v, e_j \rangle \partial_1^{k_1-1} \cdot \partial_2^{k_2} \cdots \partial_r^{k_r} \cdot \partial_j \right) \cdot F_\gamma(\Delta) \text{.}
\]
It remains to consider the first summand:
\begin{lemma}
  \label{Lvind}
  In the situation above, we have $(n+i)! \cdot L_v I_\gamma(\Delta) = i! \cdot L_v F_\gamma(\Delta)$.
\end{lemma}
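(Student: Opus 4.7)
The key idea is that $v \in M_\R$, viewed as a virtual polytope via its linear support function, acts on polytopes by translation. My plan is to compute $L_v I_\gamma(\Delta)$ and $L_v F_\gamma(\Delta)$ separately, reduce each to the quantity $I_{c(v)\cdot \gamma}$ respectively $F_{c(v)\cdot\gamma}$ at \emph{outer} induction level $i-1$, and then apply the outer induction hypothesis to match the proportionality constant.

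For $I_\gamma$: translation-invariance of Lebesgue measure gives $I_\gamma(\Delta + tv) = \int_\Delta f_\gamma(x + tv)\diff\mu$ for $\Delta \in \Pm_\Sigma^+$. Differentiating at $t=0$ under the integral sign and using linearity of $c \colon M_\R \to H^2(B,\R)$ to expand $L_v f_\gamma(x) = L_v(c(x)^i \cdot \gamma) = i\, c(v)\cdot c(x)^{i-1} \cdot \gamma$, I get
\[
  L_v I_\gamma(\Delta) = i \cdot I_{c(v)\cdot \gamma}(\Delta) \text{,}
\]
where $c(v)\cdot \gamma \in H^{k-2(i-1)}(B,\R)$; this extends from $\Pm_\Sigma^+$ to $\Pm_\Sigma$ by polynomiality.

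For $F_\gamma$: since $\rho$ is linear, $\rho(\Delta + tv) = \rho(\Delta) + t\rho(v)$, and differentiating at $t=0$ yields $L_v F_\gamma(\Delta) = (n+i)\,\rho(\Delta)^{n+i-1} \cdot \rho(v) \cdot p^*(\gamma)$. By Proposition~\ref{cherneq}, $\rho(v) = p^*c(v)$, so
\[
  L_v F_\gamma(\Delta) = (n+i)\, \rho(\Delta)^{n+i-1} \cdot p^*\bigl(c(v)\cdot \gamma\bigr) = (n+i)\cdot F_{c(v)\cdot \gamma}(\Delta) \text{,}
\]
where on the right $F_{c(v)\cdot\gamma}$ refers to the functional of the outer induction level $i-1$.

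With both sides in hand, applying the outer induction hypothesis at level $i-1$ to the class $c(v)\cdot\gamma$ gives $(n+i-1)!\cdot I_{c(v)\cdot\gamma}(\Delta) = (i-1)!\cdot F_{c(v)\cdot\gamma}(\Delta)$. Substituting and simplifying factorials yields $(n+i)!\cdot L_v I_\gamma(\Delta) = (n+i)\cdot i! \cdot F_{c(v)\cdot\gamma}(\Delta) = i!\cdot L_v F_\gamma(\Delta)$, which is the desired identity. I do not anticipate a genuine obstacle here: the argument is essentially bookkeeping of factorial factors once one recognises that $L_v$ drops the outer induction index from $i$ to $i-1$ while replacing $\gamma$ by $c(v)\cdot \gamma$. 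The only point to be slightly careful about is that the outer induction hypothesis must be available as a polynomial identity on all of $\Pm_\Sigma$ (not merely for a specific $\Delta$), which is exactly its form as stated in Theorem~\ref{BKK}.
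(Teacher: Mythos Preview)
Your proposal is correct and follows essentially the same approach as the paper: compute $L_v I_\gamma = i\cdot I_{c(v)\cdot\gamma}$ via translation invariance, compute $L_v F_\gamma = (n+i)\cdot F_{c(v)\cdot\gamma}$ via linearity of $\rho$ and Proposition~\ref{cherneq}, then invoke the outer induction hypothesis at level $i-1$ for the class $c(v)\cdot\gamma\in H^{k-2(i-1)}(B,\R)$ and clear the factorials. The only cosmetic addition is your remark about extending from $\Pm_\Sigma^+$ to $\Pm_\Sigma$ by polynomiality, which the paper leaves implicit.
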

\begin{proof}
  A direct calculation shows:
  \begin{align*}
    L_v  I_\gamma(\Delta) &= \partial_t \Bigg|_{t=0} \left(  \int_{\Delta+tv} c(x)^{i} \cdot \gamma \diff \mu \right) =  \partial_t\Bigg|_{t=0} \left(  \int_{\Delta} c(x+tv)^{i} \cdot \gamma \diff \mu \right) =\\
    &= \partial_t \Bigg|_{t=0}\left(  \int_{\Delta} \left(\sum_{a=0}^i \binom{i}{a} t^a\cdot c(v)^{a}c(x)^{i-a} \right) \cdot \gamma \diff \mu \right) =  \int_{\Delta}  i\cdot   c(v)c(x)^{i-1}  \cdot \gamma \diff \mu\\
    &= i\cdot\int_{\Delta} c(x)^{i-1}  \cdot  (c(v) \cdot \gamma) \diff \mu = i\cdot I_{c(v)\cdot \gamma}(\Delta) \text{.}
  \end{align*}
  Similarly, by a direct calculation and Proposition~\ref{cherneq}, we get
  \begin{align*}
    L_v F_\gamma(\Delta) &= \partial_t \Big|_{t=0} \Big(\rho(\Delta+tv)^{n+i}\cdot p^*(\gamma)  \Big)=
\partial_t \Big|_{t=0} \Big((\rho(\Delta)+t\rho(v))^{n+i}\cdot p^*(\gamma)  \Big)=\\
    &= \partial_t \Bigg|_{t=0} \left( \left( \sum_{a=0}^{n+i} \binom{n+i}{a} t^a \cdot \rho(v)^a \cdot \rho(\Delta)^{n+i-a}  \right)\cdot p^*(\gamma) \right) = (n+i)\cdot \rho(v) \cdot \rho(\Delta)^{n+i-1} \cdot p^*(\gamma)\\
    &= (n+i)\cdot  \rho(\Delta)^{n+i-1} \cdot p^*(c(v) \cdot \gamma) = (n+i)\cdot  F_{c(v)\gamma}(\Delta).
  \end{align*}
  Since $c(v) \cdot \gamma \in H^{k-2(i-1)}(B,\R)$, by the induction hypothesis of the outer induction, we have
  \[
    (n+i-1)! \cdot I_{c(v) \cdot \gamma}(\Delta) = (i-1)!\cdot F_{c(v) \cdot \gamma}(\Delta).
  \]
  Therefore,
  \[
    (n+i)! \cdot L_v I_\gamma(\Delta) = (n+i)! \cdot  i \cdot I_{c(v) \cdot \gamma}(\Delta) = i! \cdot (n+i) \cdot F_{c(v) \cdot \gamma}(\Delta) =  i! \cdot  L_v F_\gamma(\Delta) \text{.} \qedhere
  \]
\end{proof}

%%%%%%%%%%%%%%%%%%%%%%%%%%%%%%%%%%%%%%%%%%%%%%%%%%%%%%%%%%%%%%%%%

\section{\texorpdfstring{Graded-commutative $n$-self-dual  algebras}{Graded-commutative n-self-dual  algebras}}
\label{commalg}

%%%%%%%%%%%%%%%%%%%%%%%%%%%%%%%%%%%%%%%%%%%%%%%%%%%%%%%%%%%%%%%% 

\subsection {Introduction}
\label{sec:commalg-intro}

Suppose $A$ is a cohomology ring of a compact oriented manifold which can be represented as factor-algebra of some other algebra $B$.
In this section we discuss the question what extra information is needed to determine $A$ out of $B$?
For the reader's convenience, we recall several crucial notions.

\begin{definition} 
  A graded $\K$-algebra $A = A^0 \oplus A^1 \oplus \cdots \oplus A^k \oplus \cdots$ over a field $\K$ is called \emph{graded-commutative} if for any homogeneous elements $x$, $y$, the following relation hold:
  \[
    xy = (-1)^{\deg(x)+\deg(y)} yx
  \]
  where $\deg(x)$ (resp.~$\deg(y)$) denotes the degree of $x$ (resp.~of~$y$).
\end{definition}

Such algebras are fundamental in geometry.
For example, the cohomology ring $A = H^*(X, \K)$ of a CW complex $X$ is known to be a graded-commutative $\K$-algebra.
However, the cohomology ring of compact oriented manifolds satisfies further important properties:

\begin{definition}  
  Let  $n$ be a natural number.
  A graded-commutative $\K$-algebra is \emph{Poincar\'{e} $n$-self-dual} (or just, is \emph{$n$-self-dual}) if the following conditions hold:
  \begin{enumerate}
  \item The algebra $A$ has a multiplicative unit element $e$ which is homogeneous of degree zero, i.e. $e \in A^0 \subseteq A$.
  \item The homogeneous components $A^k$ for $k>n$ vanish, i.e. $A^k=0$ for $k>n$, and $\dim_{\K}(A^n) = 1$.
  \item The pairing $A^k \times A^{n-k} \rightarrow A^n$ induced by multiplication in the algebra $A$ is non-degenerate for $0 \le k \le n$.
  \end{enumerate}
\end{definition}

\begin{remark}
  By the non-degeneracy of the pairing $A^k \times A^{n-k} \to A^n$, we have $\dim_\K(A^k) = \dim_\K(A^{n-k})$ for every $0 \le k \le n$ such that the component $A^k$ is a finite dimensional space.
  In particular, $\dim_\K(A^0) = 1$.
\end{remark}

The following example plays a key role in our discussion.

\begin{example}
  \label{ex2}
  Let $M$ be a connected compact oriented $n$-dimensional manifold.
  By Poincar\'{e} duality, the cohomology ring $A = H^*(M, \K)$ is an $n$-self-dual graded-commutative $\K$-algebra.
  Moreover, $A$ is equipped with two extra structures:
  \begin{enumerate}
  \item A $\K$-linear function $\ell_* \colon A \to \K$  given as follows.
    By linearity, it suffices to define $\ell_*$ on homogeneous elements.
    For homogeneous $\alpha$ of degree $n$, we let $\ell_*(\alpha)$ be equal to the value of the cohomology class $\alpha$ on the fundamental class of the oriented manifold $M$.
    If the degree of $\alpha$ is not $n$, we set $\ell_*(\alpha)=0$.
  \item A $\K$-bilinear intersection form $F_{\ell_*}$ on $A$ defined by the identity $F_{\ell_*}(\alpha, \beta) = \ell_*(\alpha \cdot \beta)$ (recall that we denote the cup product of the cohomology ring $A$ by ``$\cdot$'').
    Note that $F_{\ell_*}$ is non-degenerate (the non-degeneracy of $F_{\ell_*}$ is equivalent to the Poincar\'{e} duality on $A$).
  \end{enumerate}
\end{example}

Consider another graded-commutative $\K$-algebra $B = B^0 \oplus B^1 \oplus \cdots \oplus B^k \oplus \cdots$ with multiplicative unit element $e \in B^0$ and $\dim_{\K}(B^0) = 1$.
Suppose that the cohomology ring $A = H^*(M, \K)$ from Example~\ref{ex2} is a factor-algebra of $B$. 

\begin{question}
  \label{quest1}
  We are going to investigate the following questions:
  \begin{enumerate}
  \item What extra information on $B$ is needed to determined the algebra $A$?
  \item What extra information on $B$ is needed to determined  the intersection form $F_{\ell^*}$ on $A$?
  \end{enumerate}
\end{question}

We provide answers to both questions in Subsections~\ref{n-self-dual} and~\ref{frob} respectively.

%%%%%%%%%%%%%%%%%%%%%%%%%%%%%%%%%%%%%%%%%%%%%%%%%%%%%%%%%%%%%%%%%

\subsection{\texorpdfstring{Graded-commutative $n$-self-dual factor algebras}{Graded-commutative n-self-dual factor algebras}}
\label{n-self-dual}

In this section, let $ B = B^0 \oplus B^1 \oplus \cdots \oplus B^k \oplus \cdots$ be a graded-commutative $\K$-algebra with multiplicative unit element $e \in B^0$ and $\dim_{\K}(B^0) = 1$.
We want to describe all $n$-self dual factor-algebras of $B$.
To that extent we introduce the following notion.

\begin{definition} 
  An ideal $I\subseteq B$  is called \emph{$n$-self-dual} (or \emph{$n$-sd ideal} for short) if $I$ is a two-sided homogeneous ideal and the factor-algebra $A = B/I$ is $n$-self-dual.
\end{definition}

Clearly, factor-algebras of $B$ which are $n$-self-dual are in correspondence with $n$-sd ideals.
Therefore part (1) of Question~\ref{quest1} boils down to describing all $n$-sd ideals in $B$.
Our solution consists of the following steps:

\begin{enumerate}
\item If $I$ is an $n$-sd ideal then $I \cap B^n$ is a hyperplane in $B^n$ (see Lemma~\ref{lemma2}).
\item Conversely, for any hyperplane $L \subseteq B^n$ there is a unique $n$-sd ideal $I = I(L)$ such that $I \cap B^n = L$ (see Lemmas~\ref{Lemma6} and~\ref{Lemma7}).
\item We present an explicit construction of the $n$-sd ideal $I(L)$  (see Definition~\ref{def5}).
\end{enumerate}

\begin{lemma}
  \label{lemma2}
 If  $I\subseteq B$ is an $n$-sd ideal then $L=I\cap B^n$ is a hyperplane in $B^n$.
\end{lemma}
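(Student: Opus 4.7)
The plan is essentially to unwind the definitions and observe that the statement is an immediate consequence of homogeneity of $I$ together with the defining property $\dim_\K(A^n) = 1$ of $n$-self-duality. There is no substantive obstacle.

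First I would note that since $I$ is a homogeneous ideal, it decomposes as $I = \bigoplus_{k \ge 0} I^k$ with $I^k \coloneqq I \cap B^k$. Consequently, the quotient algebra $A = B/I$ inherits the grading, and for every $k$ one has a canonical identification
\[
A^k \;=\; B^k / I^k.
\]
In particular, taking $k = n$ yields $A^n \cong B^n / L$ where $L = I^n = I \cap B^n$.

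Next I would invoke the second condition in the definition of $n$-self-duality, which asserts $\dim_\K(A^n) = 1$. Combined with the isomorphism above, this gives $\dim_\K(B^n/L) = 1$, which by definition means $L$ is a hyperplane in $B^n$ (and, as a byproduct, that $B^n \ne 0$, so the notion of hyperplane is non-trivial here). This completes the argument.

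The only point that requires a line of justification is the first one, namely that a homogeneous two-sided ideal of a graded algebra is the direct sum of its homogeneous components and that the quotient therefore has a natural grading with $A^k = B^k / (I \cap B^k)$; this is a standard fact about graded algebras and is where the hypothesis that $I$ be homogeneous is used. No further structure (graded-commutativity, the Poincaré pairing, or finite-dimensionality of lower-degree components) enters the proof of this particular lemma — those will be needed only for the subsequent lemmas that construct the $n$-sd ideal $I(L)$ associated to a hyperplane $L \subseteq B^n$ and establish its uniqueness.
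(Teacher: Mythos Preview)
Your proof is correct and follows the same approach as the paper's own proof, which is a one-line argument: since $B^n/(I\cap B^n)=A^n$ and $\dim(A^n)=1$, $L$ is a hyperplane in $B^n$. You simply spell out in more detail why the homogeneity of $I$ yields the identification $A^n \cong B^n/L$.
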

\begin{proof}
  Since $B^n / (I \cap B^n) = A^n$ and $\dim(A^n) = 1$, $L$ is a hyperplane in $B^n$.
\end{proof}

As hyperplanes are in correspondence with linear functions, we introduce the following:

\begin{definition}  
  A  linear function $\ell \colon B\to \K$ on a graded-commutative $\K$-algebra $B$ is \emph{$n$-homogeneous} if it is not identically equal to zero on $B$, but for any $m \ne n$ its restriction to the homogeneous component $B^m$ vanishes.
  Denote by $L_\ell\subseteq B^n$  the hyperplane in $B^n$ defined by identity $L_\ell = \{\ell = 0\} \cap B^n$.
\end{definition}

Clearly, any hyperplane $L \subseteq B^n$ is equal to the hyperplane $L_\ell$ for a unique (up to a non-zero scalar multiple) $n$-homogeneous linear function $\ell$.
Next, we explain how to obtain an $n$-self-dual factor-algebra from an $n$-homogeneous linear function $\ell \colon B \to \K$.

\begin{definition}
  \label{def4} 
  For an $n$-homogeneous  linear function $\ell \colon B \to \K$ let $I_1(L_\ell)$ and $I_2(L_\ell)$ be the subsets of $B$ defined by the following conditions:
  \begin{enumerate}
  \item an element $a \in B$ belongs to $I_1(L_\ell)$ if and only if $\ell(ab) = 0$ for all $b \in B$;
  \item an element $b \in B$ belongs to $I_2(L_\ell)$ if and only if $\ell(ab) = 0$ for all $a \in B$.
  \end{enumerate}
\end{definition}

Clearly, the sets $I_1(L_\ell)$ and $I_2(L_\ell)$ depend only on the hyperplane $L = L_\ell \subseteq B^n$.

\begin{lemma}
  \label{Lemma3}
  For an $n$-homogeneous linear function $\ell$, $I_1(L_\ell) = I_2(L_\ell)$ is a two-sided homogeneous ideal in $B$.
\end{lemma}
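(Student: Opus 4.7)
The plan is to verify three things in sequence: first that $I_1(L_\ell) = I_2(L_\ell)$, then that this common set is a two-sided ideal, and finally that it is homogeneous.

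For the equality $I_1(L_\ell) = I_2(L_\ell)$, I would exploit graded-commutativity of $B$ together with the fact that $\ell$ only "sees" degree $n$. Writing any $a,b \in B$ as finite sums of homogeneous components $a = \sum_i a_i$ and $b = \sum_j b_j$, the bilinearity of the pairing $(a,b) \mapsto \ell(ab)$ reduces the vanishing condition to pairs with $i+j = n$. For such pairs, $a_i b_j = (-1)^{i(n-i)} b_j a_i$, so $\ell(a_i b_j) = \pm \ell(b_j a_i)$, and the condition ``$\ell(ab) = 0$ for all $b$'' becomes symmetric in $a$ and $b$. Hence $I_1(L_\ell) = I_2(L_\ell)$; call this common set $I$.

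Showing $I$ is a two-sided ideal is then a short associativity argument. If $a \in I$ and $c \in B$, then for every $b \in B$, $\ell((ac)b) = \ell(a(cb)) = 0$, so $ac \in I_1 = I$. Symmetrically, since $a \in I = I_2$ also means $\ell(ba) = 0$ for all $b$, we get $\ell(b(ca)) = \ell((bc)a) = 0$, so $ca \in I_2 = I$. Thus $I$ is closed under multiplication by $B$ on either side.

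The main subtle point is homogeneity. Suppose $a = \sum_i a_i \in I$ with $a_i \in B^i$; I need each $a_i \in I$. The key observation is that for any homogeneous $b_j \in B^j$, the product $a_i b_j$ has degree $i+j$, so $\ell(a_i b_j) = 0$ unless $i+j = n$. Consequently $\ell(a b_j) = \ell(a_{n-j} b_j)$ for any homogeneous $b_j$, and the hypothesis $a \in I$ forces $\ell(a_{n-j} b_j) = 0$ for all $b_j \in B^j$. Applying the same degree argument to an arbitrary $b = \sum_k b_k \in B$, one has $\ell(a_{n-j} b) = \ell(a_{n-j} b_j)$, which equals zero; hence $a_{n-j} \in I$ for every $j$, i.e.\ every homogeneous component of $a$ lies in $I$. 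This shows $I$ is a homogeneous ideal.

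None of the steps presents a serious obstacle; the only point requiring care is to use the grading of $\ell$ (it vanishes off degree $n$) at exactly the right moment to collapse the relevant sums to a single homogeneous term, which is what makes both the equality $I_1 = I_2$ and the homogeneity work cleanly.
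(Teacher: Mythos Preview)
Your proof is correct and uses the same ingredients as the paper---the $n$-homogeneity of $\ell$ to collapse to a single homogeneous pair, graded-commutativity for the sign relation $\ell(a_ib_j)=\pm\ell(b_ja_i)$, and associativity for absorption---but in a slightly different order: the paper first shows $I_1(L_\ell)$ is a homogeneous right ideal and only then uses homogeneity to deduce $I_1(L_\ell)\subseteq I_2(L_\ell)$, whereas you establish $I_1(L_\ell)=I_2(L_\ell)$ directly at the outset by testing on homogeneous $b$. Your route is marginally cleaner since it avoids invoking homogeneity as an intermediate lemma for the equality; one small omission is that you do not explicitly note closure under addition, but this is immediate from linearity of $\ell$.
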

\begin{proof} 
  We first prove that the set $I_1(L_\ell)$ is a \emph{right} ideal in $B$.
  If $a \in I_1(L_\ell)$ and $b \in B$, then for any $c \in B$ the identity $\ell( (ab)c ) = \ell ( a(b c) ) = 0$ holds, and thus $ab \in I_1(L_\ell)$.
  If $a, b\in I_1(L_\ell)$ then $\ell(ac) = \ell(bc) = 0$ for any $c\in B$.
  Hence, $\ell( (a+b)c ) =  0$ which implies $a+b \in I_1(L_\ell)$.

  Next we prove that $I_1(L_\ell)$ is homogeneous, i.e. if $a = \sum_{i=0}^d a_i$ for $a_i \in B^i$ is in $I_1(L_\ell)$, then $a_i \in I_1(L_\ell)$ for every $i$.
  For $c = \sum_{m=0}^{d'} c _m$ with $c_m \in B^m$, we have $\ell(a_i c)= \sum_{m=0}^{d'} \ell(a_i c_m)$, and thus it suffices to show $\ell(a_i c_m) = 0$ for any homogeneous $c_m \in B_m$.
  Since $\ell$ is $n$-homogeneous, $\ell(a_i c_m) = 0$ for every $i$ with $i+m\neq n$.
  However, from $0 = \ell(ac_m) = \sum_{i=0}^d \ell(a_ic_m)$, it follows $\ell(a_{n-m} c_m) = 0$.
  Thus $a_i \in I_1(L_\ell)$.
  
  Analogously, it follows that $I_2(L_\ell)$ is a homogeneous \emph{left} ideal in $B$.

  Let us show that $I_1(L_\ell) \subseteq I_2(L_\ell)$.
  Let  $a = \sum_{j=0}^d a_j$ be in $I_1(L_\ell)$ and  $b = \sum_{i=0}^{d'} b_i$ be in $B$ where $a_i, b_i \in B^i$.
  Then $\ell(ba) = \sum_{i=0}^{d'} \sum_{j=0}^d \ell(b_i a_j) = \sum_{i=0}^{d'}\sum_{j=0}^d (-1)^{i+j} \ell( a_jb_i)$.
  Since $I_1(L_\ell)$ is homogeneous, we have $a_j \in I_1(L_\ell)$, and thus $\ell( a_jb_i)=0$ for all couples $i, j$.
  It follows $\ell(ba) = 0$ which means that $I_1(L_\ell) \subseteq I_2(L_\ell)$.
  
  Similarly, we can prove that $I_2(L_\ell)\subseteq I_1(L_\ell)$ and the statement is proven.
\end{proof}

\begin{definition}
  \label{def5}
  For an $n$-homogeneous linear function $\ell$, let $I(L_\ell)$ be the two-sided ideal $I_1(L_\ell) = I_2(L_\ell)$.
  The ideal $I(L_\ell)$ depends only on the hyperplane $L = L_\ell$, and so the notation $I(L)=I(L_\ell)$ makes sense.
\end{definition}

\begin{lemma}
  \label{Lemma6}
  For any hyperplane $L \subseteq B^n$ the set $I(L) \subseteq B$ is an $n$-sd ideal.
\end{lemma}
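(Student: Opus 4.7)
The plan is straightforward: $I(L)$ is already known to be a two-sided homogeneous ideal by Lemma~\ref{Lemma3}, so the quotient $A \coloneqq B/I(L)$ is automatically a graded-commutative $\K$-algebra. It remains to check the three defining properties of $n$-self-duality, and for each of them the key observation will be that $\ell$ is $n$-homogeneous together with the very definitions of $I_1(L)$ and $I_2(L)$.

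First I would show that $A^0 \simeq \K$ and that $A$ has a unit. Since $\ell$ is not identically zero but vanishes outside $B^n$, the unit $e \in B^0$ satisfies $\ell(e \cdot b) = \ell(b)$ for all $b$, which is not identically zero; hence $e \notin I_1(L) = I(L)$. Combined with $\dim_\K(B^0) = 1$ and homogeneity of $I(L)$, this gives $I(L) \cap B^0 = 0$, so the image of $e$ is a unit in $A$ and $A^0 \simeq \K$. Next I would verify that $A^k = 0$ for $k > n$: if $a \in B^k$ with $k > n$ and $b = \sum_m b_m \in B$ with $b_m \in B^m$, then $a b_m \in B^{k+m}$ with $k+m > n$, and $n$-homogeneity of $\ell$ forces $\ell(a b_m) = 0$ termwise, so $a \in I_1(L) = I(L)$.

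For $\dim_\K A^n = 1$, I would show $I(L) \cap B^n = L$. The inclusion $L \subseteq I(L)$ follows by the same splitting argument: for $a \in L$ and $b = \sum_m b_m$, the summand $\ell(a b_m)$ vanishes for $m > 0$ by $n$-homogeneity, and for $m = 0$ because $b_0 \in \K e$ and $\ell(a) = 0$. Conversely, if $a \in B^n \cap I_1(L)$, then $\ell(a) = \ell(a \cdot e) = 0$, so $a \in L$. Hence $A^n = B^n/L$ is one-dimensional, and $\ell$ descends to an isomorphism $\bar\ell \colon A^n \xrightarrow{\sim} \K$.

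Finally, for the Poincar\'{e} pairing $A^k \times A^{n-k} \to A^n$, I would use both descriptions $I(L) = I_1(L) = I_2(L)$ symmetrically. Under the identification $A^n \simeq \K$ via $\bar\ell$, the pairing sends $(\bar a, \bar b)$ to $\ell(ab)$. If $\bar a \in A^k$ pairs trivially with every $\bar b \in A^{n-k}$, then $\ell(a b_{n-k}) = 0$ for every $b_{n-k} \in B^{n-k}$; for any $b = \sum_m b_m \in B$ the remaining summands $\ell(a b_m)$ with $m \neq n-k$ vanish automatically by $n$-homogeneity, so $a \in I_1(L) = I(L)$ and $\bar a = 0$. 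The vanishing on the other side is the mirror statement using $I_2(L)$. This proves non-degeneracy, completing the verification of all three axioms.

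The main (and really only) subtlety is keeping track of the graded decomposition carefully in each step: all the arguments reduce to the slogan ``the only degree in which $\ell$ can detect anything is~$n$, and the defining property of $I_1(L)$ (resp.~$I_2(L)$) is that it kills $\ell$ against every right (resp.~left) multiplier.'' Nothing deeper than this is required.
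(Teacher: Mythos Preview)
Your proof is correct and follows essentially the same approach as the paper: invoke Lemma~\ref{Lemma3} for the ideal property, then use $n$-homogeneity of $\ell$ to kill high degrees and to reduce the non-degeneracy check to the defining property of $I_1(L)$ (resp.~$I_2(L)$). If anything, you are more thorough than the paper, which leaves the verifications $A^0\simeq\K$ and $I(L)\cap B^n=L$ (hence $\dim_\K A^n=1$) implicit or defers them to Lemma~\ref{Lemma7}.
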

\begin{proof} 
  By Lemma~\ref{Lemma3}, $I(L)$ is a two-sided homogeneous ideal.
  Set $A=B/I(L)$.
  
  If $k > n$, then for any $b_k \in B^k$ and any $c = \sum_{i=0}^d c_i$ in $B$ with $c_i \in B^i$, we have $\ell(b_k c) = \sum_{i=0}^d \ell(b_k c_i) = 0$ (since $\ell$ is $n$-homogeneous).
  Hence, $B^k = I(L) \cap B^k$ for $k>n$, or equivalently $A^k$ vanishes for $k>n$.
  
  It remains to show that the pairing $A^k \times A^{n-k} \to A^n$ induced by multiplication is non-degenerate.
  Suppose $a \in A^k$ is in the kernel of this pairing, i.e. $ab = 0$ for any $b \in A^{n-k}$.
  Let $a^* \in B^k$ be an element whose image in $A$ is equal to $a$.
  Then for any $b^* \in B^{n-k}$, we have $a^*b^* \in I(L)$, and in particular, $\ell(a^*b^*) = 0$.
  Since $\ell$ is $n$-homogeneous, for any $c \in B$, we have $\ell(a^*c) = \ell(a^*c_{n-k})$ where $c_{n-k} \in B^{n-k}$ is the homogeneous component of $c$ of degree $n-k$.
  Thus $a^* \in I_1(L_\ell) = I(L)$, or equivalently $a=0$ in $A$, i.e. the pairing is non-degenerate in the first component.
  A similar argument shows non-degeneracy in the second component.
\end{proof}

\begin{lemma}
  \label{Lemma7}
  For any hyperplane $L\subseteq B^n$ there is a unique $n$-sd ideal $I\subseteq B$ such that $I\cap B^n=L$.
  This ideal $I$ coincides with the ideal $I(L)$.
\end{lemma}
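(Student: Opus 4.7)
The plan is to prove the two inclusions needed for uniqueness together with the fact that $I(L)$ itself satisfies $I(L) \cap B^n = L$. Let $\ell \colon B \to \K$ be an $n$-homogeneous linear function with $L_\ell = L$, which exists and is unique up to a nonzero scalar.

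First I would verify that $I(L) \cap B^n = L$. One inclusion is immediate: if $a \in L = L_\ell \subseteq B^n$ and $b = \sum_i b_i \in B$ with $b_i \in B^i$, then $\ell(ab) = \sum_i \ell(a b_i) = \ell(a b_0)$ since $\ell$ is $n$-homogeneous. Writing $b_0 = \lambda e$ (using $\dim_\K B^0 = 1$) gives $\ell(ab) = \lambda \ell(a) = 0$, so $a \in I_1(L_\ell) = I(L)$. Conversely, if $a \in I(L) \cap B^n$, then taking $b = e$ shows $\ell(a) = 0$, hence $a \in L$.

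Next I would show uniqueness. Let $J$ be any $n$-sd ideal with $J \cap B^n = L$, and set $A = B/J$. For the inclusion $J \subseteq I(L)$, take $a \in J$ and any $b \in B$. Since $\ell$ is $n$-homogeneous, $\ell(ab)$ depends only on the degree-$n$ component $(ab)_n$. Writing $a = \sum a_i$, $b = \sum b_j$ with homogeneous components, and using that $J$ is a homogeneous ideal, each summand $a_i b_j$ with $i+j = n$ lies in $J \cap B^n = L$, so $(ab)_n \in L$ and $\ell(ab) = 0$. Thus $a \in I_1(L_\ell) = I(L)$. For the reverse inclusion $I(L) \subseteq J$, it suffices by homogeneity to take $a \in I(L) \cap B^k$. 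If $k > n$, then $A^k = 0$, so $B^k \subseteq J$ and $a \in J$. If $k \le n$, then for every $c \in B^{n-k}$ we have $ac \in B^n$ with $\ell(ac) = 0$, hence $ac \in L = J \cap B^n$, so the image $\bar a \in A^k$ satisfies $\bar a \, \bar c = 0$ for all $\bar c \in A^{n-k}$; the Poincar\'e duality pairing of the $n$-sd algebra $A$ then forces $\bar a = 0$, i.e.\ $a \in J$.

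The proof is essentially bookkeeping with homogeneous components; I don't expect a serious obstacle. The one point deserving care is that the non-degeneracy argument requires the full Poincar\'e pairing on $A$ and not merely $A^n \ne 0$, so the hypothesis that $J$ is an $n$-sd ideal (and not just any homogeneous ideal with $J \cap B^n = L$) is used crucially in the reverse inclusion; a parallel dualization would be needed if one wanted to extract uniqueness under weaker assumptions. Combining everything: the unique $n$-sd ideal with prescribed hyperplane $L \subseteq B^n$ is $I(L)$.
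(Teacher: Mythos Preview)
Your proof is correct and follows essentially the same approach as the paper: verify $I(L)\cap B^n=L$ by the same computation with $b_0$, then prove uniqueness by showing $J\subseteq I(L)$ (using that $J$ is a homogeneous ideal so products land in $J\cap B^n=L$) and $I(L)\subseteq J$ (using the non-degenerate Poincar\'e pairing on $B/J$). The only cosmetic difference is that the paper phrases the $J\subseteq I(L)$ direction as a contrapositive, while you argue it directly; your explicit treatment of the case $k>n$ is also a small clarification the paper leaves implicit.
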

\begin{proof} 
  Let $\ell$ be an $n$-homogeneous linear function such that $L=L_\ell$.
  
  We start by showing that $I(L) \cap B^n = L$.
  The inclusion ``$\subseteq$'' is straightforward.
  For the reverse inclusion suppose $a \in L$.
  Then for any $b = \sum_{i=0}^d b_i$ in $B$ with $b_i \in B^i$, we have $\ell(ab) = \sum_{i=0}^d \ell(ab_i) = \ell(ab_0) = b_0 \ell(a) = 0$, and thus $a \in I(L) \cap B^n$.
  This shows the existence-part of the statement.
  
  To show uniqueness, let $I \subseteq B$ be an $n$-sd ideal with $I \cap B^n = L$.
  As both $I$ and $I(L)$ are homogeneous, it suffices to show that $I \cap B^k = I(L) \cap B^k$ for every $k$.
  If $a\in I(L) \cap B^k = I_1(L_\ell)\cap B^k$, then its image in the factor-algebra $A=B/I$ belongs to the kernel of the pairing $A^k\times A^{n-k}\to A^n = B/L$.
  Thus $a\in I \cap B^k$ and $I_1(L_\ell) \cap B^k \subseteq I \cap B^k$.
  Conversely, if $a\notin I_1(L_\ell)\cap B^k$, then there is $b \in B^{n-k}$ such that $ab \notin L$.
  Thus the image of the element $ab \in B^n$ is not equal to zero in $A = B/I$ and $a \notin I \cap B^k$.
\end{proof}

%%%%%%%%%%%%%%%%%%%%%%%%%%%%%%%%%%%%%%%%%%%%%%%%%%%%%%%%%%%%%%%%%

\subsection{\texorpdfstring{Frobenius forms and $n$-self-dual factor-algebras}{Frobenius forms and n-self-dual factor-algebras}}
\label{frob}

In this section, we investigate the second part of Question~\ref{quest1}.
For the reader's convenience, we provide a summary at the end of the section.

Consider a bilinear form $F\colon L_1 \times L_2 \to \K$ on a product of $\K$-linear spaces $L_1$ and $L_2$.
Recall the \emph{left radical $R_1\subseteq L_1$} respectively the \emph{right radical $R_2\subseteq L_2$} of $F$:
\begin{enumerate}
    \item $a\in R_1$ if and only if $F(a,b)=0$ for all $b\in L_2$;
    \item $b\in R_2$ if and only if $F(a,b)=0$ for all $a\in L_1$.
\end{enumerate}

\begin{definition}
  Let $\ell \colon B \to \K$ be an $n$-homogeneous linear function on a graded-commutative algebra $B$.
  The \emph{Frobenius bilinear form} $F_\ell \colon B \times B \to \K$ associated with $\ell$ is the form defined by the identity $F_\ell(a,b) = \ell(a \cdot b)$.
\end{definition}

\begin{theorem}
  \label{Th8}
  Let $\ell\colon B \to \K$ be an $n$-homogeneous linear function on a graded-commutative algebra $B$.
  \begin{enumerate}
    \item The $n$-sd ideal $I(L_\ell) = I_1(L_\ell) = I_2(L_\ell)$ coincides with the left and also the right radical of  $F_\ell$.
    \item There is a unique $n$-homogeneous linear function $\ell_*$ on the factor algebra $A=B/I(L)$ such that $\ell=\rho^* \ell_*$ where $\rho \colon B \to A$ is the natural homomorphism.
      Furthermore, for any $a, b\in B$ the relation $F_\ell(a,b) = F_{\ell_*}(\rho(a),\rho(b))$ holds.
    \item The Frobenius form $F_{\ell_*}$ on $A$ is non-degenerate.
  \end{enumerate}
\end{theorem}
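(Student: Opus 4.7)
The plan is to prove the three parts in order, using the groundwork already laid by Lemmas~\ref{Lemma3}, \ref{Lemma6}, and \ref{Lemma7}.

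Part (1) is essentially a restatement of the definitions. First, I would observe that the left radical of $F_\ell$ consists by definition of those $a \in B$ for which $\ell(ab) = 0$ for every $b \in B$, which is precisely the set $I_1(L_\ell)$. Likewise, the right radical is precisely $I_2(L_\ell)$. By Lemma~\ref{Lemma3} we already know $I_1(L_\ell) = I_2(L_\ell) = I(L_\ell)$, so both radicals coincide with the $n$-sd ideal $I(L_\ell)$.

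For part (2), the key idea is to define $\ell_* \colon A \to \K$ by $\ell_*(\rho(a)) \coloneqq \ell(a)$ and show this is well-defined; uniqueness then follows from surjectivity of $\rho$, and the factorization identity $F_\ell(a,b) = F_{\ell_*}(\rho(a),\rho(b))$ is an immediate consequence of $\rho$ being a ring homomorphism. To check well-definedness I would show that $\ell$ vanishes on $I(L_\ell)$. Since $I(L_\ell)$ is homogeneous (Lemma~\ref{Lemma3}), any element of $I(L_\ell)$ decomposes as a sum of homogeneous components lying in $I(L_\ell)$; because $\ell$ is $n$-homogeneous, all components of degree $\ne n$ contribute zero, while the degree-$n$ component lies in $I(L_\ell) \cap B^n = L_\ell$ (by Lemma~\ref{Lemma7}) and so is also annihilated by $\ell$. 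The $n$-homogeneity of $\ell_*$ then follows because $A^k$ is the image of $B^k$ under $\rho$: for $k \ne n$ the function $\ell_*$ vanishes on $A^k$ since $\ell$ vanishes on $B^k$, and on $A^n$ the function $\ell_*$ is nonzero since otherwise $B^n$ would be contained in the kernel of $\ell$, contradicting the fact that $L_\ell$ is a proper hyperplane of $B^n$.

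Part (3) is then a direct translation of part (1) across the quotient. Suppose $\rho(a) \in A$ lies in the left radical of $F_{\ell_*}$, so that $F_{\ell_*}(\rho(a), \rho(b)) = 0$ for every $b \in B$. By the factorization identity of part (2), this means $F_\ell(a, b) = 0$ for all $b \in B$, so $a$ belongs to the left radical of $F_\ell$, which by part (1) equals $I(L_\ell)$. Hence $\rho(a) = 0$, and the analogous argument handles the right radical.

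The main obstacle is the well-definedness argument in part (2): one must carefully exploit both the homogeneity of the ideal $I(L_\ell)$ and the $n$-homogeneity of $\ell$ to conclude that $\ell$ annihilates $I(L_\ell)$, using the compatibility $I(L_\ell) \cap B^n = L_\ell$ established in Lemma~\ref{Lemma7}. The other parts are formal once this is done.
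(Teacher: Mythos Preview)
Your proposal is correct and follows essentially the same approach as the paper: part~(1) is the definitional unwinding plus Lemma~\ref{Lemma3}, part~(2) hinges on the identification $I(L_\ell)\cap B^n = L_\ell$ from Lemma~\ref{Lemma7} to get well-definedness of $\ell_*$, and part~(3) is the observation that quotienting by the radical yields a non-degenerate form. The paper's argument for part~(2) is organized slightly differently (defining $\ell_*$ on $A^n$ first via the factorization $B^n/L \simeq A^n$ and then extending by zero, rather than defining it on all of $A$ and checking well-definedness), but the content is identical.
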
 
\begin{proof}
  By definition the left radical (resp.~the right radical) of the form $F_\ell$ coincides with $I_1(L_\ell)$ (resp.~with $I_2(L_\ell)$).
  By Lemma~\ref{Lemma3}, $I(L_\ell)=I_2(L_\ell)=I_2(L_\ell)$, and thus the first statement follows (see also Definition~\ref{def5}).
  
  For the second statement, observe that the kernel $L\subseteq B^n$ of the restriction of the function $\ell|_{B^n}$ coincides with the kernel of the surjective map $\rho\colon B^n\to A^n$ (see Lemma~\ref{Lemma7}).
  Thus there is a unique function $\ell_*\colon A^n\to \K$ such that $\ell=\rho^* \ell_*$.
  The relation $F_\ell(a,b)=F_{\ell^*}(\rho(a), \rho(b))$ follows by definition.

  For the last statement, observe that the form $F_{\ell_*}$ is obtained from the form $F_\ell$ by taking the quotient of $B$ by the left and right radical of the form $F_\ell$.
  Thus $F_{\ell_*}$ is a non-degenerate form.
\end{proof}

Let us summarize the results from this section.
In order to reconstruct an $n$-sd factor-algebra $A=B/I$ equipped with an intersection form on $A$ it suffices to fix a Frobenius form $F_\ell$ on $B$ corresponding to an $n$-homogeneous linear function $\ell$ on $B$.
The algebra $A$ is equal to $B/I(L_\ell)$ (note that the algebra $A$ depends only on the hyperplane $L = L_\ell$, i.e.  proportional $n$-homogeneous functions define the same algebra $A$).
The form $F_\ell$ is induced by the unique non-degenerate intersection form $F_{\ell_*}$ on $A$.
The set of $n$-sd factor-algebras $A=B/I$ equipped with non-degenerate intersection forms is in one-to-one correspondence with the set of $n$-homogeneous linear functions on $B$.

\begin{theorem}
  \label{thm-nsdquatient}
  Let $B$ be a graded-commutative algebra over a field $\K$, and let $A$ be an $n$-sd factor-algebra of $B$ with a chosen isomorphism $\phi\colon A^n\to \K$.
  Let $\rho\colon B\to A$ be the natural homomorphism.
  Then
  \[
    A\simeq B/I(L_\ell),
  \]
  where $\ell$ is the $n$-homogeneous linear function defined by
  \[
    \ell\colon B^n\to \K, \quad \ell(b)=\phi(\rho(b)),
  \]
  and extended by 0 to $B^i$ with $i\ne n$.

  Moreover, for any $n$-homogeneous linear function $\ell$, the algebra $A=B/I(L_\ell)$ is $n$-self dual and the corresponding Frobenius form $F_{\ell_*}$ on $A$ is non-degenerate.
\end{theorem}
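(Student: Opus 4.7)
The plan is to assemble the theorem directly from the preceding lemmas; there is essentially no new content, only bookkeeping. First I would verify that the linear function $\ell$ built from $(A,\phi)$ is $n$-homogeneous in the sense of the section. Since $\rho\colon B\to A$ is a homomorphism of graded algebras, we have $\rho(B^n)\subseteq A^n$, and $\phi\colon A^n\to\K$ is a linear isomorphism, so the composition $\phi\circ\rho|_{B^n}$ is a nonzero linear map $B^n\to\K$. Extending by zero on $B^i$ for $i\ne n$ yields an $n$-homogeneous linear function in the sense of the definition immediately preceding Definition~\ref{def4}.

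Next I would identify the hyperplane $L_\ell\subseteq B^n$ with $I\cap B^n$, where $I=\ker\rho$. Because $\phi$ is an isomorphism, for $b\in B^n$ one has $\ell(b)=\phi(\rho(b))=0$ if and only if $\rho(b)=0$, i.e.\ $b\in I\cap B^n$. By Lemma~\ref{lemma2}, $I\cap B^n$ is a hyperplane in $B^n$, which is consistent with $L_\ell$ being the kernel of a nonzero linear function on $B^n$. Now I invoke the uniqueness statement of Lemma~\ref{Lemma7}: for the hyperplane $L_\ell$ there is a \emph{unique} $n$-sd ideal whose intersection with $B^n$ equals $L_\ell$, namely $I(L_\ell)$. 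Since the given $I$ is also an $n$-sd ideal with $I\cap B^n=L_\ell$, uniqueness forces $I=I(L_\ell)$, and hence $A=B/I\simeq B/I(L_\ell)$, proving the first assertion.

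For the ``moreover'' part, I would simply quote Lemma~\ref{Lemma6}, which says that for any hyperplane $L\subseteq B^n$ (equivalently, for any $n$-homogeneous linear function $\ell$) the ideal $I(L_\ell)$ is $n$-self-dual, so $A=B/I(L_\ell)$ satisfies all three axioms of an $n$-self-dual algebra. The non-degeneracy of the Frobenius form $F_{\ell_*}$ on $A$ is precisely part (3) of Theorem~\ref{Th8}, which I would cite verbatim; the descent of $\ell$ to the well-defined $\ell_*$ on $A$ is part (2) of the same theorem and is exactly how $F_{\ell_*}$ was introduced.

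The only conceivable obstacle is a pedantic one: checking that the $\ell$ constructed from $(A,\phi)$ is genuinely nonzero on $B^n$ (so that $L_\ell$ is a hyperplane and not all of $B^n$). This is immediate because $\dim_\K A^n=1$, the isomorphism $\phi$ is nonzero, and $\rho\colon B^n\to A^n$ is surjective. Hence the argument is complete with no technical hurdle; the theorem is essentially a repackaging of the correspondence between $n$-sd ideals in $B$, hyperplanes in $B^n$, and (up to scalar) $n$-homogeneous linear functionals on $B$ established in Lemmas~\ref{lemma2}, \ref{Lemma6}, \ref{Lemma7} and Theorem~\ref{Th8}.
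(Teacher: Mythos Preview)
Your proposal is correct and mirrors the paper's approach exactly: the paper presents Theorem~\ref{thm-nsdquatient} as a summary statement with no separate proof, since it is assembled precisely from Lemmas~\ref{lemma2}, \ref{Lemma6}, \ref{Lemma7} and Theorem~\ref{Th8} in the way you describe. Your verification that $\ell$ is genuinely $n$-homogeneous and that $L_\ell=I\cap B^n$ fills in the only bookkeeping the paper leaves implicit.
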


%%%%%%%%%%%%%%%%%%%%%%%%%%%%%%%%%%%%%%%%%%%%%%%%%%%%%%%%%%%%%%%%%

\section{Applications}
\label{sec:applications}

%%%%%%%%%%%%%%%%%%%%%%%%%%%%%%%%%%%%%%%%%%%%%%%%%%%%%%%%%%%%%%%% 

\subsection{The Cohomology ring of toric bundles}\label{sec:cohtoricbundle}
In this section, we apply the results from Sections~\ref{BKKproof} and~\ref{commalg} to give a description of the cohomology ring of a toric bundle.
Let, as before, $p\colon E\to B$ be a principal torus bundle, $\Sigma$ a smooth projective fan with $\Sigma(1) = \{\rho_1, \ldots, \rho_r\}$, $X = X_\Sigma$ the corresponding toric variety, and $E_X$ the corresponding toric bundle.
By the Leray-Hirsch theorem (see Theorem~\ref{thm:Leray-Hirsch}) the cohomology ring $H^*(E_X,\R)$ is a quotient of the polynomial algebra $R[x_1,\ldots,x_r]$, where $R=H^*(B,\R)$ is the cohomology ring of the base.
\begin{theorem}
  \label{cohbundle}
  In the notation from above the cohomology ring of $E_X$ is given by
  \[
    H^*(E_X,\R) \simeq R[x_1,\ldots,x_r]/I(L_\ell)
  \]
 where $\ell \colon R[x_1,\ldots,x_r]\to \R$ is a $(k+2n)$-homogeneous linear function defined by: 
  \[
    \ell(\gamma \cdot x_{i_1} \cdots x_{i_s}) = I_\gamma(\rho_{i_1},\ldots, \rho_{i_s})
  \]
  for any monomial $\gamma\cdot x_{i_1}\cdots x_{i_s}$ with $\deg(\gamma)+2s=k+2n$.
\end{theorem}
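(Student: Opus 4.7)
The plan is to apply Theorem~\ref{thm-nsdquatient} to the natural surjection from $B \coloneqq R[x_1, \ldots, x_r]$ onto $A \coloneqq H^*(E_X,\R)$, and to identify the resulting linear functional via the polarised form of the generalised BKK Theorem~\ref{BKK}.

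First, I would verify the two hypotheses of Theorem~\ref{thm-nsdquatient}: that there is a surjection $\pi \colon B \twoheadrightarrow A$ of graded $R$-algebras and that $A$ is $(k+2n)$-self-dual. The surjectivity of $\pi \colon x_i \mapsto [D_i]$ follows from the Leray-Hirsch Theorem (Corollary~\ref{cor:cohmodule}): since $H^*(X_\Sigma,\R)$ is generated as an $\R$-algebra by the divisor classes, the basis classes $u_j$ in the Leray-Hirsch theorem can be chosen to be monomials in the $[D_i]$, and thus every element of $A$ is an $R$-polynomial in the $[D_i]$. The $(k+2n)$-self-duality is Poincar\'e duality, as $E_X$ is a closed oriented manifold of real dimension $k + 2n$; integration against the fundamental class $[E_X]$ furnishes the canonical isomorphism $\phi \colon A^{k+2n} \xrightarrow{\sim} \R$.

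Applying Theorem~\ref{thm-nsdquatient} immediately yields $A \simeq B / I(L_{\tilde\ell})$ for the $(k+2n)$-homogeneous linear function $\tilde\ell \coloneqq \phi \circ \pi$; evaluated on a top-degree monomial this reads
\[
  \tilde\ell(\gamma \cdot x_{i_1} \cdots x_{i_s}) = \int_{E_X} p^*(\gamma) \cdot [D_{i_1}] \cdots [D_{i_s}] \text{.}
\]
To match $\tilde\ell$ with the $\ell$ of the statement, I would use the polarised form of Theorem~\ref{BKK}. Writing $h_{i_j} \in \Pm_\Sigma$ for the basis piecewise-linear function associated with the ray $\rho_{i_j}$, one has $\rho(h_{i_j}) = [D_{i_j}]$, and a direct computation shows that the polarisation of $F_\gamma$ evaluated at $(h_{i_1}, \ldots, h_{i_s})$ equals the product $[D_{i_1}] \cdots [D_{i_s}] \cdot p^*(\gamma)$ (interpreted as a real number via $\phi$). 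Polarising the identity $(n+i)! I_\gamma(\Delta) = i! F_\gamma(\Delta)$ then expresses the above integral as a scalar multiple of $I_\gamma(\rho_{i_1}, \ldots, \rho_{i_s})$, and the claim follows.

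The main obstacle is the bookkeeping of the normalisation factors $\tfrac{(n+i)!}{i!}$ arising from polarisation and reconciling them with the paper's convention for the mixed integral $I_\gamma(\rho_{i_1}, \ldots, \rho_{i_s})$. The key observation which saves this step is that Theorem~\ref{thm-nsdquatient} guarantees that the ideal $I(L_\ell)$ depends only on the hyperplane $L_\ell \subseteq B^{k+2n}$ cut out by $\ell$; hence one only needs to verify that $\tilde\ell$ and $\ell$ cut out the same hyperplane, from which the identification $I(L_{\tilde\ell}) = I(L_\ell)$ and the theorem follow.
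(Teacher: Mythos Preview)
Your overall strategy is exactly the paper's: apply Theorem~\ref{thm-nsdquatient} to the surjection $R[x_1,\ldots,x_r]\twoheadrightarrow H^*(E_X,\R)$ (surjectivity via Leray--Hirsch, self-duality via Poincar\'e duality), and then invoke the polarised form of Theorem~\ref{BKK} to identify the resulting functional. The paper's own proof is a two-sentence sketch along precisely these lines, so on the level of approach you match it.

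Where your argument breaks down is the last paragraph. You correctly observe that Theorem~\ref{BKK} produces a factor $\tfrac{(n+i)!}{i!}$ between $\tilde\ell$ and the $\ell$ of the statement, and you propose to absorb it by noting that $I(L_\ell)$ depends only on the hyperplane $L_\ell$. But $\tilde\ell$ and $\ell$ do \emph{not} cut out the same hyperplane in $B^{k+2n}$. The top graded piece decomposes as $B^{k+2n}=\bigoplus_{s=n}^{n+\lfloor k/2\rfloor} R^{k+2n-2s}\otimes \Sym^s(\R^r)$, and on the summand indexed by $s$ the two functionals differ by the scalar $\tfrac{s!}{(s-n)!}$; since these scalars are distinct for different $s$, the kernels differ (take any two elements $v_s,v_{s'}$ in different summands with $\ell(v_s),\ell(v_{s'})\neq 0$ and compare the linear combinations killed by $\ell$ versus $\tilde\ell$). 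By Lemma~\ref{Lemma7}, distinct hyperplanes yield distinct ideals $I(L)$, so your proposed reduction fails. The paper's proof is equally silent on this point; the honest reading is that the functional in the statement is meant to be your $\tilde\ell$, with Theorem~\ref{BKK} serving to express each value $\tilde\ell(\gamma\cdot x_{i_1}\cdots x_{i_s})$ as $\tfrac{(n+i)!}{i!}\,I_\gamma(h_{i_1},\ldots,h_{i_s})$ rather than to eliminate the combinatorial factor.
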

\begin{proof}
  By Theorem~\ref{thm-nsdquatient}, as $H^*(E_X,\R)$ is a graded commutative $(k+2n)$-self dual factor algebra of $R[x_1,\ldots,x_r]$, it is given by $R[x_1,\ldots,x_r]/I(L_\ell)$ for a $(k+2n)$-homogeneous linear function $\ell$ which is obtained by pairing cohomology classes with the fundamental class of $E_X$.
  Hence the statement follows by Theorem~\ref{BKK}.
\end{proof}

We conclude this subsection with a proof of Theorem~\ref{thm:US}, i.e. a proof of the Sankaran-Uma-description of the cohomology ring of toric bundles.
\begin{proof}[Proof of Theorem~\ref{thm:US}]
  As above, let $R$ denote the cohomology ring of the base.
  We want to show that the cohomology ring $H^*(E_X, \R)$ is isomorphic to the quotient $R[x_1, \ldots, x_r]/(I+J)$, where 
  \[
    I = \left\langle x_{j_1} \cdots x_{j_k} \colon \rho_{j_1}, \ldots, \rho_{j_k} \; \text{do not span a cone of} \; \Sigma \right\rangle \text{,} \qquad
    J=\left\langle - c\left( \lambda \right) + \sum_{i=1}^r \langle e_i, \lambda \rangle x_i \colon \lambda \in M \right\rangle  \text{.}
  \]
  By the Leray-Hirsch theorem (Theorem~\ref{thm:Leray-Hirsch}), there is a surjection $\phi\colon R[x_1, \ldots, x_r] \to H^*(E_X, \R)$.
  By  Proposition~\ref{cherneq}, the ideal $J$ is contained in $\ker(\phi)$.
  Furthermore, by Theorem~\ref{cohbundle}, the polarisation formula~\eqref{eq:polarisation_formula}, and the first part of Lemma~\ref{Ider}, we have $I\subseteq \ker(\phi)$.
  Thus, we have a surjection
  \[
    \phi\colon R[x_1, \ldots, x_r]/(I+J) \to H^*(E_X, \R)\text{.}
  \]
  To simplify the notation, we also denote this surjection with $\phi$.
  It remains to show that the surjection $\phi$ is an isomorphism, i.e. $\phi$ is injective.
  For that it suffices to verify that $\dim_\R R[x_1, \ldots, x_r]/(I+J) = \dim_\R H^*(E_X, \R)$.

  Let $SR_\Sigma \coloneqq \R[x_1, \ldots, x_r]/I$ be the Stanley-Reisner algebra associated to $\Sigma$.
  Then
  \[
    A \coloneqq  R[x_1, \ldots, x_r]/(I+J) = (R\otimes SR_\Sigma) / J.
  \]
  We consider the ring  $A_0 \coloneqq R\otimes (SR_\Sigma / J_0)$ where $J_0$ is the ideal given by $J_0 = \big\langle \sum_{i=1}^r \langle e_i, \lambda \rangle x_i \colon \lambda \in M \big\rangle$.
  By the K\"unneth formula, we have that
  \[
    A_0 = R\otimes (SR_\Sigma/J_0) = R\otimes H^*(X, \R) \simeq H^*(B\times X,\R).
  \]
  By the Leray-Hirsch theorem (Theorem~\ref{thm:Leray-Hirsch}), we have $\dim_\R (R \otimes H^*(X,\R)) = \dim_\R H^*(E_X,\R)$, and thus
  \[
    \dim_\R A_0 =\dim_\R H^*(B\times X,\R) = \dim_\R H^*(E_X,\R) \leq \dim_\R A.
  \]
  Hence it suffices to show that $\dim_\R A_0 \geq \dim_\R A$.
  We have $A=(R\otimes SR_\Sigma) / J$ and $A_0=(R\otimes SR_\Sigma) / J_0$, with $J$ and $J_0$ homogeneous ideals.
  So to show that $\dim_\R A_0 \geq \dim_\R A$ it suffices to verify
  \[
    \dim_\R J^{i} \geq  \dim_\R J_0^{i}, 
  \]
  where $J^{i}= (R\otimes SR_\Sigma)^i\cap J$, $J_0^{i}= (R\otimes SR_\Sigma)^i\cap J_0$ are the $i$-th homogeneous components of $J$ and $J_0$ respectively.
  
  Denote by $s_\lambda = - c\left( \lambda \right) + \sum_{i=1}^r \langle e_i, \lambda \rangle x_i$ and $s_\lambda^0 = \sum_{i=1}^r \langle e_i, \lambda \rangle x_i$ the generators of the ideals $J$ and $J_0$ respectively.
  Note $R\otimes SR_\Sigma$ is a bi-graded ring and $s_\lambda = s_\lambda^0 - c(\lambda)$ is the difference of two bi-homogeneous elements with $\deg(c(\lambda))=(2,0), \deg(s_\lambda^0) = (0,2)$.
  Suppose $v_1, \ldots, v_N \in J_0$ is an $\R$-basis of $J_0^i$.
  For all $k=1, \ldots, N$, we may assume $v_k$ is bi-homogeneous of degree $(n_k,i-n_k)$ for some $n_k = 0, 1, \ldots, i$.
  Hence,
  \[
    v_k = \sum_{\lambda \in M} r_{\lambda,k}^{(n_k,i-n_k-2)} s^0_\lambda \qquad\text{for bi-homogeneous $r_{\lambda,k}^{(n_k,i-n_k-2)} \in R\otimes SR_\Sigma$ of degree $(n_k,i-n_k-2)$.}
  \]
  Set $w_k \coloneqq \sum_{\lambda\in M} r_{\lambda,k}^{(n_k,i-n_k-2)}s_\lambda \in J^i$ for $k=1, \ldots, N$ and note $w_k = v_k + w_k'$ for bi-homogeneous $w_k' \in R\otimes SR_\Sigma$ of degree $(n_k+2,i-n_k-2)$.
  We claim $w_1, \ldots, w_N$ are $\R$-linearly independent concluding the proof.
  
  Suppose $(a_1, \ldots, a_N) \neq (0, \ldots, 0) \in \R^N$ such that $\sum_{i=1}^N a_i w_i = 0 \in J^i$.
  Set $n \coloneqq \min\{ n_k \colon k=1, \ldots,N, a_k \neq0 \}$.
  Then the \mbox{$(n,i-n)$-homogeneous component} of $\sum_{i=1}^N a_i w_i$ is given by $\sum_{k \in \mathcal{K}} a_k v_k=0$ for some subset $\mathcal{K} \subseteq \{1, \ldots, N\}$.
  A contradiction (as $v_1, \ldots, v_N$ form a basis of $J_0^i$).
\end{proof}
%%%%%%%%%%%%%%%%%%%%%%%%%%%%%%%%%%%%%%%%%%%%%%%%%%%%%%%%%%%%%%%% 

\subsection{Ring of conditions} 
\label{sec:roch}

The ring of conditions constitutes an intersection ring for horospherical homogeneous spaces.
We refer to \cite{DP85} for further details and references.
The exposition given here follows \cite{roch}.

Let $G$ be a reductive complex algebraic group and $H \subseteq G$ a closed subgroup.
A cycle of codimension $k$ of $G/H$ is a formal linear combination $k_1 X_1 + \ldots + k_r X_r$ of irreducible closed subvarieties $X_i$ of $G/H$ with $k_i \in \Z$.
We write $\Zm^k(G/H)$ for the $\Z$-module of cycles of codimension $k$.
Two cycles $X,Y$ are said to \emph{intersect properly} if either their intersection is empty or the dimension of each irreducible component of $X \cap Y$ equals $\dim(X) + \dim(Y) - \dim(G/H)$.
We need the following result by Kleiman \cite[Corollary 4]{Kl74}:

\begin{theorem}[Kleiman's transversality theorem]
  \label{thm:kleiman}
  For any two irreducible subvarieties $X, Y \subseteq G/H$ there exists a dense open $U \subseteq G$ such that $g\cdot X$ and $Y$ intersect properly for each $g \in U$.
  If $X, Y$ have complementary dimensions, $g\cdot X \cap Y$ is a finite union of points and $|g \cdot X \cap Y|$ is constant for generic $g \in G$.
\end{theorem}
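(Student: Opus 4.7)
My plan follows Kleiman's classical argument: construct an incidence variety of translations and apply the generic fiber dimension theorem. Define
\[
  W \coloneqq \{(g, x, y) \in G \times X \times Y : g \cdot x = y\}
\]
together with its projection $\pi \colon W \to G$, $(g, x, y) \mapsto g$. Projecting $\pi^{-1}(g)$ onto the $y$-coordinate identifies the fiber with $gX \cap Y$ (since $x = g^{-1}y$ is then determined as soon as $g$ and $y$ are). The theorem reduces to a dimension analysis of this projection.

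First, I would compute the dimension of $W$ using the other projection $\pi_{XY} \colon W \to X \times Y$. Because $G$ acts transitively on $G/H$, $\pi_{XY}$ is surjective, and each fiber $\{g \in G : gx = y\}$ is a coset of the stabilizer $\mathrm{Stab}_G(x)$, hence of pure dimension $\dim G - \dim(G/H)$. Therefore every irreducible component of $W$ has dimension $\dim X + \dim Y + \dim G - \dim(G/H)$.

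Second, I would apply the generic fiber dimension theorem to $\pi$, working component-by-component. For each irreducible component $W_i$ of $W$ that dominates $G$, there is a dense open $U_i \subseteq G$ over which every irreducible component of $(\pi|_{W_i})^{-1}(g)$ has dimension $\dim W_i - \dim G = \dim X + \dim Y - \dim(G/H)$. For each non-dominant component, the complement of the closure of its image is a dense open. Intersecting these finitely many opens yields a dense open $U \subseteq G$ such that, for every $g \in U$, each irreducible component of $gX \cap Y$ has dimension exactly $\dim X + \dim Y - \dim(G/H)$, which is precisely the condition of proper intersection.

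Third, for the complementary-dimension assertion, the expected fiber dimension is zero, so $\pi^{-1}(g)$ is finite for $g \in U$. For each dominant component $W_i$, the map $\pi|_{W_i}$ is generically finite between irreducible varieties of equal dimension; since we are in characteristic zero this forces generic étaleness of some well-defined degree $d_i$. Non-dominant components contribute no points for generic $g \in U$. Shrinking $U$ so that $\pi|_{W_i}$ is finite étale over $U$ for every dominant $i$, and using that $G$ is connected (hence $U$ can be taken connected), the cardinality $|gX \cap Y| = \sum_i d_i$ is constant on $U$.

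The main technical obstacle is the bookkeeping when $W$ is reducible (for instance when the point stabilizers in $G/H$ have several connected components): the dimension analysis and generic étaleness argument must be carried out component-by-component before intersecting the finitely many resulting dense opens. Once this is done, both assertions of the theorem fall out of the generic fiber dimension theorem together with, for the cardinality statement, generic étaleness in characteristic zero.
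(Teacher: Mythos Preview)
The paper does not prove this statement: it is quoted as Kleiman's transversality theorem with a citation to \cite[Corollary~4]{Kl74} and used as a black box. Your proposal reproduces Kleiman's original incidence-variety argument and is correct, so there is nothing to compare against in the paper itself.
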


Theorem~\ref{thm:kleiman} gives rise to an intersection pairing between algebraic cycles of complementary codimensions:
\[
  \Zm^k(G/H) \times \Zm^{\dim(G/H) - k}(G/H) \to \Z; \quad(X,Y) \mapsto (X \cdot Y) \coloneqq |g \cdot X \cap Y| \quad (\text{for generic} \; g \in G) \text{,}
\]
where $X,Y \subseteq G/H$ are assumed to be irreducible subvarieties.

Two algebraic cycles $X,Y \in \Zm^k(G/H)$ are said to be equivalent if for any algebraic cycle of complementary codimension $Z \in \Zm^{\dim(G/H)-k}(G/H)$ the intersection products coincide, i.e. $(X \cdot Z) = (Y \cdot Z)$.
We denote the group of equivalence classes by $C^k(G/H)$ (``the group of conditions of codimension $k$'').
The intersection pairing factors through the equivalence relation, so that we obtain a pairing: $C^k(G/H) \times C^{\dim(G/H)-k}(G/H) \to \Z$.

De Concini and Procesi showed that $C^*(G/H) \coloneqq \bigoplus_{k=0}^{\dim(G/H)} C^k(G/H)$ can be equipped with a ring structure:
\begin{theorem}[{\cite[Section 6.3]{DP85}}]
  \label{thm:DP}
  Suppose $G/H$ is a horospherical homogeneous space.
  Define an intersection product on $C^*(G/H)$: $[X] \cdot [Y] \coloneqq [g X \cap Y]$ for generic $g \in G$ where $X,Y \subseteq G/H$ are irreducible subvarieties.
  This intersection product is well-defined and there is a canonical isomorphism of graded rings
  \[
    C^*(G/H) = \varinjlim_E H^*(E, \Z)
  \]
  where the limit is taken over complete (or equivalently smooth) horospherical embeddings $G/H \hookrightarrow E$.
\end{theorem}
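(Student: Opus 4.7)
The plan is to verify the intersection product is well-defined on $C^*(G/H)$, construct a natural map into the direct limit of cohomology rings, and prove it is a ring isomorphism.

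First I would check well-definedness of $[X] \cdot [Y] \coloneqq [gX \cap Y]$ for generic $g \in G$. By Kleiman's theorem (Theorem~\ref{thm:kleiman}), for $g$ in a dense open subset of $G$ the translate $gX$ meets $Y$ properly, so $gX \cap Y$ is a cycle of codimension $\mathrm{codim}(X) + \mathrm{codim}(Y)$. That its class in $C^*(G/H)$ is independent of the generic $g$ follows by pairing with a test cycle $Z$ of complementary dimension: applying Kleiman again, $(gX \cap Y) \cdot Z = |h(gX \cap Y) \cap Z|$ for generic $h$, and this number equals the triple intersection $(X \cdot Y \cdot Z)$ and hence does not depend on $g$. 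The same reasoning yields commutativity and associativity.

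Next I would build the map $\Phi \colon C^*(G/H) \to \varinjlim_E H^*(E, \Z)$. For a smooth complete horospherical embedding $G/H \hookrightarrow E$ and an irreducible subvariety $X \subseteq G/H$ of codimension $k$, let $\overline{X} \subseteq E$ be its closure and let $\phi_E(X) \in H^{2k}(E, \Z)$ be its Poincar\'{e} dual. The assignment descends to $C^k(G/H) \to H^{2k}(E, \Z)$ because cohomology classes on $E$ are determined by their pairings against Poincar\'{e} duals of generic $G$-translates of $G/H$-subvarieties (by Kleiman applied to $E$), and these pairings are precisely the intersection numbers defining equivalence in $C^*$. For a refinement $\pi\colon E' \to E$, the closures of $X$ in $E$ and $E'$ correspond under $\pi^*\colon H^*(E,\Z) \to H^*(E',\Z)$, so the $\phi_E$ assemble into the map $\Phi$. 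That $\Phi$ is a ring homomorphism follows once more from Kleiman on $E$: for generic $g$, $g\overline{X}$ and $\overline{Y}$ meet transversally in $E$ with intersection contained in $G/H$, giving $\phi_E(X) \cdot \phi_E(Y) = \phi_E(gX \cap Y)$.

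For the bijection, injectivity is essentially tautological: if $\Phi([X_1]) = \Phi([X_2])$ then $\overline{X_1}$ and $\overline{X_2}$ have equal Poincar\'{e} duals in some $E$, so they have equal intersection numbers with every subvariety of $E$; by Kleiman these recover $(X_i \cdot Z)$ for all $Z \subseteq G/H$, forcing $[X_1] = [X_2]$ in $C^*(G/H)$. Surjectivity is the serious step: every class in $H^*(E, \Z)$ is a $\Z$-combination of cycle classes, and one must show that, after refining $E$, each such cycle can be taken to meet the open orbit $G/H$, so that its class arises as $\phi_{E'}(X)$ for some $X \subseteq G/H$. This is where the horospherical hypothesis is used essentially, via the Luna--Vust/Knop classification: any $G$-invariant subvariety in the boundary of $E$ is itself the closed orbit of a horospherical embedding of a smaller homogeneous space, and by choosing a sufficiently refined colored fan one can make every cycle class dominated by closures of $G/H$-cycles. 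Controlling this interaction between boundary strata and cycle closures across the directed system of embeddings is the main obstacle, and it is the technical core of De Concini and Procesi's argument in \cite{DP85}.
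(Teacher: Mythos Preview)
The paper does not prove this theorem. It is stated with an explicit attribution to \cite[Section~6.3]{DP85} and immediately followed by the remark ``Theorem~\ref{thm:DP} is a special case of \cite[Section~6.3]{DP85} which suffices for our purposes.'' There is therefore nothing in the paper to compare your proposal against; the authors simply import the result and use it.

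As for the sketch itself: it is a reasonable outline of the De~Concini--Procesi strategy, and you are honest that surjectivity is where the real work lies. A couple of points deserve attention if you intend to flesh it out. First, your injectivity argument (and the claim that $\phi_E$ descends to $C^*$) implicitly uses that $H^*(E,\Z)$ is spanned by classes of algebraic cycles, so that Poincar\'e pairings against such classes detect equality; for smooth complete horospherical varieties this follows from the Bia{\l}ynicki-Birula paving (cf.\ the remark after Corollary~\ref{cor:roch}), but it should be stated. Second, the well-definedness step needs more care than a triple-intersection count: one must also show independence of the representatives of $[X]$ and $[Y]$, not just of $g$, and the associativity argument via iterated generic translates is more delicate than indicated. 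Finally, your surjectivity paragraph gestures at the right mechanism (moving boundary cycles into the open orbit after refinement) but, as you acknowledge, this is exactly the substantial content of \cite{DP85} and is not something one can expect to reproduce in a paragraph.
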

Theorem~\ref{thm:DP} is a special case of \cite[Section 6.3]{DP85} which suffices for our purposes.
Furthermore, the intersection product in Theorem~\ref{thm:DP} might not be well-defined for arbitrary homogeneous spaces $G/H$.
We refer to \cite[Exercise 4.1]{roch} for a counterexample.
In this paper we will exclusively work with the ring of conditions with real coefficients.
Therefore, we introduce the notation $C^*_\R(G/H)$ for the tensor product $C^*(G/H) \otimes_\Z \R$.

For the rest of this section let $H \subseteq G$ be a horospherical subgroup.
In Section~\ref{sec:horospherical}, we saw that $P = N_G(H)$ is a parabolic subgroup and that $E = G/H$ is a principal torus bundle over $G/P$ with respect to the torus $T = P/H$.
For a $T$-toric variety $X$, we saw that the associated toric bundle $E_X$ is a horospherical variety.
Since any toric variety is dominated by a smooth projective one, the same is true for these horospherical varieties $E_X$.
Furthermore, using the Luna-Vust theory of spherical embeddings (see, for instance, \cite[Theorem~3.3]{Knop:SphEmbd}), it follows that an (arbitrary) horospherical variety $G/H \hookrightarrow Y$ is dominated by some horospherical variety $E_X$ as described above.
Thus, the smooth projective $E_X$ form a cofinal set for the direct limit in Theorem~\ref{thm:DP}.

We now give a description of the ring of conditions $C^*(G/H)$ using Theorem~\ref{cohbundle}.
Let $R=H^*(G/P,\Z)$ be the cohomology ring of the flag variety $G/P$ and $\Sym^*(\Pm)$ be the symmetric algebra of integral virtual polytopes.
Furthermore set $k=\dim_\C(G/P)$ and $n=\dim_\C(P/H)$.

\begin{corollary}
  \label{cor:roch}
  With the notation above, $C^*(G/H)$ is an $2(n+k)$-self dual factor algebra of $R\otimes \Sym(\Pm)$:
  \[
    C^*(G/H)  =R\otimes \Sym(\Pm)/I_\ell, \; \text{with} \; \ell(\gamma\otimes \Delta_{i_1}\cdots\Delta_{i_s}) = I_\gamma(\Delta_{i_1},\ldots,\Delta_{i_s}) \text{.}
  \]
  In particular, $\ell(\gamma\otimes \Delta_{i_1}\cdots\Delta_{i_s}) =0$ unless $\deg(\gamma)+2s=2(n+k)$.
\end{corollary}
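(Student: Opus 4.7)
The plan is to combine Theorem~\ref{thm:DP}, which identifies $C^*_\R(G/H)$ with a direct limit of cohomology rings over complete horospherical embeddings, with our virtual-polytope description of the cohomology ring of each toric bundle (Theorem~\ref{cohbundle}), and then pass to the limit over the cofinal system of smooth projective toric bundles $E_{X_\Sigma}$ over $G/P$.

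Concretely, I would proceed as follows. The discussion immediately preceding the corollary already establishes that horospherical embeddings of the form $E_{X_\Sigma}$, where $X_\Sigma$ is a smooth projective $T$-toric variety, form a cofinal subsystem among all horospherical embeddings of $G/H$. Hence Theorem~\ref{thm:DP} yields
\[
  C^*_\R(G/H) \simeq \varinjlim_{\Sigma} H^*(E_{X_\Sigma}, \R),
\]
with transition maps given by pullback along the birational morphisms $E_{X_{\Sigma'}}\to E_{X_\Sigma}$ induced by refinement of fans. For each fixed smooth projective $\Sigma$ with rays $\rho_1,\dots,\rho_r$, Theorem~\ref{cohbundle} identifies $H^*(E_{X_\Sigma},\R)$ with $R[x_1,\dots,x_r]/I(L_{\ell_\Sigma})$. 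Identifying the variable $x_i$ with the generator $h_i$ of $\Pm_\Sigma$ (the piecewise linear function with $h_i(e_j) = \delta_{ij}$) upgrades this to
\[
  H^*(E_{X_\Sigma},\R) \simeq \bigl(R \otimes \Sym(\Pm_\Sigma)\bigr)\big/ I(L_{\ell_\Sigma}),
\]
with $\ell_\Sigma$ the $(2(n+k))$-homogeneous linear function $\gamma \otimes \Delta_{i_1}\cdots \Delta_{i_s} \mapsto I_\gamma(\Delta_{i_1},\dots,\Delta_{i_s})$ for $\Delta_{i_j} \in \Pm_\Sigma$. Note that each $E_{X_\Sigma}$ has complex dimension $n+k$, so each quotient is indeed $2(n+k)$-self-dual.

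Next I pass to the colimit along refinements. Under a refinement $\Sigma'$ of $\Sigma$, the natural inclusion $\Pm_\Sigma \hookrightarrow \Pm_{\Sigma'}$ induces an inclusion $R \otimes \Sym(\Pm_\Sigma) \hookrightarrow R \otimes \Sym(\Pm_{\Sigma'})$, and passing to the colimit yields $R \otimes \Sym(\Pm)$. The crucial compatibility is that $\ell_{\Sigma'}$ restricts to $\ell_\Sigma$ along this inclusion; this is a direct consequence of Theorem~\ref{ultimatepoly}, which states that the polynomial $I_\gamma$ on virtual polytopes is intrinsically defined on $\Pm$, independent of the chosen fan. Consequently the ideals $I(L_{\ell_\Sigma})$ assemble into a single ideal $I_\ell = \varinjlim I(L_{\ell_\Sigma})$, where $\ell$ is precisely the $(2(n+k))$-homogeneous linear function stated in the corollary, and Theorem~\ref{thm-nsdquatient} then identifies the colimit with the desired quotient $R \otimes \Sym(\Pm)/I_\ell$. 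The degree condition $\deg(\gamma)+2s = 2(n+k)$ is built in as the homogeneity constraint on $\ell$.

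The main technical obstacle is verifying that the pullback map $H^*(E_{X_\Sigma},\R) \to H^*(E_{X_{\Sigma'}},\R)$ along a refinement corresponds precisely to the algebra map induced by the inclusion $R \otimes \Sym(\Pm_\Sigma) \hookrightarrow R \otimes \Sym(\Pm_{\Sigma'})$ under the identifications from Theorem~\ref{cohbundle}. I would check this by tracking the divisor class map $\rho\colon \Pm \to H^2(E, \R)$ of Section~\ref{sec:toric_bundles}: for $\Delta \in \Pm_\Sigma$, the class $\rho_\Sigma(\Delta)$ on $E_{X_\Sigma}$ pulls back to $\rho_{\Sigma'}(\Delta)$ on $E_{X_{\Sigma'}}$, where $\Delta$ is now viewed as an element of $\Pm_{\Sigma'}$. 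The naturality of the BKK-formula of Theorem~\ref{BKK} under pullback (the mixed integral of $f_\gamma$ over $\Delta$ depends only on the virtual polytope $\Delta$, not on the fan used to represent it) then ensures that the defining linear functions are compatible with the transition maps, allowing Theorem~\ref{thm-nsdquatient} to be applied in the limit.
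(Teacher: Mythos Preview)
Your proposal is correct and follows exactly the approach the paper intends: the paper states this corollary without an explicit proof, treating it as an immediate consequence of Theorem~\ref{cohbundle} applied to each smooth projective toric bundle $E_{X_\Sigma}$, together with the direct-limit description from Theorem~\ref{thm:DP} and the cofinality of such $E_{X_\Sigma}$ established just before the corollary. Your write-up correctly fills in the details the paper leaves implicit, in particular the compatibility of the linear functions $\ell_\Sigma$ under refinement (via Theorem~\ref{ultimatepoly}) and of the transition maps with the inclusions $\Pm_\Sigma \hookrightarrow \Pm_{\Sigma'}$.
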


\begin{remark}
  Note that in Theorem~\ref{cohbundle} we compute the cohomology ring of toric bundles over $\R$.
  Since spherical varieties come equipped with a $\C^*$-action that has only finitely many fixed points, by the Bia{\l}ynicki-Birula decomposition \cite{BB}, we have a paving by affine spaces.
  Thus $H^*(E_X,\Z)$ is torsion free in this case, and hence our calculations also work over $\Z$. 
\end{remark}

We conclude this section with an alternative description of $C^*_\R(G/H)$ using the description of the cohomology ring from Section~\ref{sec:horospherical}.
The challenge is to find a good description of the ring in Theorem~\ref{thm:US} as we want to take the direct limit over all smooth projective $E_X$.
The following approach is inspired by \cite{Br96} (see also \cite{roch}).

As in Section~\ref{sec:horospherical}, let $M$ be the character lattice of the torus $T=P/H$ and set $M_\R = M \otimes \R$ and $N_\R = \Hom_\Z(M, \R)$.
Let $\Sigma$ be a smooth projective fan in $N_\R$.
A map $f \colon N_\R \to \R$ is \emph{piecewise polynomial} if for any $\sigma \in \Sigma$, the map $f|_\sigma \colon \sigma \to \R$ extends to a polynomial function on the linear space $\lspan_\R\{ \sigma\}$, i.e. a piecewise polynomial function $f$ on $\Sigma$ is a collection of compatible polynomial functions $f_\sigma \colon \sigma \to \R$.
In particular, such a function is continuous.
We denote by $\PP_\Sigma$ the set of all piecewise polynomial functions on $\Sigma$ which is a ring under pointwise addition and multiplication.
Let $\Sym(M_\R)$ be the symmetric algebra of polynomial functions on $N_\R$.
Note that $\PP_\Sigma$ is a positively graded $\R$-algebra with graded subalgebra $\Sym(M_\R)$.
Indeed, any piecewise polynomial function uniquely decomposes into a sum of homogeneous piecewise polynomial functions.

We can now reformulate Theorem~\ref{thm:US}:
\begin{proposition}[{\cite[Proposition 4.10]{roch}}]
  If $E_X$ is a horospherical variety obtained as associated toric bundle of a smooth projective toric variety $X=X_\Sigma$, the cohomology ring $H^*(E_X,\R)$ is isomorphic as an $H^*(G/P, \R)$-algebra to the quotient of $H^*(G/P,\R) \otimes \PP_\Sigma$ by the ideal $\left\langle c(\lambda) \otimes 1 - 1 \otimes \langle \cdot, \lambda \rangle \colon \lambda \in M \right\rangle$.
\end{proposition}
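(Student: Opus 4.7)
The plan is to deduce the proposition directly from Theorem~\ref{thm:US} (which was already proved in Section~\ref{sec:cohtoricbundle}) by translating the Stanley-Reisner presentation into the language of piecewise polynomial functions. The bridge between the two descriptions is a classical identification, due to Billera, that for a smooth fan $\Sigma$ the Stanley-Reisner ring coincides with the ring of piecewise polynomial functions $\PP_\Sigma$.

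First, I would recall the Billera-type isomorphism. For each ray $\rho_i\in\Sigma(1)$ with primitive generator $e_i$, there is a unique piecewise linear function $\phi_i\in \PP_\Sigma$ (the Courant/conewise function) characterized by $\phi_i(e_j)=\delta_{ij}$ for all rays, which makes sense precisely because $\Sigma$ is smooth. Consider the $\R$-algebra homomorphism
\[
  \Phi \colon \R[x_1,\ldots,x_r]\longrightarrow \PP_\Sigma, \qquad x_i\longmapsto \phi_i.
\]
The key claim is that $\Phi$ is surjective with kernel equal to the Stanley-Reisner ideal
\[
  I_{SR}=\bigl\langle x_{i_1}\cdots x_{i_k} : \rho_{i_1},\ldots,\rho_{i_k}\text{ do not span a cone of }\Sigma\bigr\rangle.
\]
The inclusion $I_{SR}\subseteq \ker\Phi$ is immediate since the support of $\phi_{i_1}\cdots\phi_{i_k}$ is contained in the union of cones containing all of $\rho_{i_1},\ldots,\rho_{i_k}$. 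Surjectivity and the reverse inclusion follow by a standard cone-by-cone argument: on any maximal cone $\sigma$ with rays $\rho_{j_1},\ldots,\rho_{j_n}$, the functions $\phi_{j_1},\ldots,\phi_{j_n}$ form a dual basis to $e_{j_1},\ldots,e_{j_n}$ on $\lspan\{\sigma\}$, so every polynomial on $\sigma$ lifts to a polynomial in those $\phi_{j}$, and compatibility on intersections of maximal cones is enforced precisely modulo $I_{SR}$.

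Next, I would tensor this isomorphism with $H^*(G/P,\R)$ to obtain
\[
  H^*(G/P,\R)[x_1,\ldots,x_r]/I_{SR}\;\simeq\; H^*(G/P,\R)\otimes_\R \PP_\Sigma,
\]
and then identify the remaining linear relations from Theorem~\ref{thm:US} under this isomorphism. For $\lambda\in M$, evaluating at any primitive ray generator $e_j$ gives
\[
  \sum_{i=1}^r\langle e_i,\lambda\rangle\,\phi_i(e_j)=\langle e_j,\lambda\rangle,
\]
so $\sum_i\langle e_i,\lambda\rangle\phi_i$ agrees with the global linear function $\langle\cdot,\lambda\rangle$ on every ray and hence, by piecewise linearity, on all of $N_\R$ (using completeness of $\Sigma$). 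Therefore the generator $c(\lambda)-\sum_i\langle e_i,\lambda\rangle x_i$ of the ideal in Theorem~\ref{thm:US} corresponds to $c(\lambda)\otimes 1 - 1\otimes \langle\cdot,\lambda\rangle$ in $H^*(G/P,\R)\otimes \PP_\Sigma$, matching the ideal in the proposition.

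Combining the two identifications yields the asserted isomorphism of $H^*(G/P,\R)$-algebras. The only nontrivial input is the Billera isomorphism $\R[x_1,\ldots,x_r]/I_{SR}\simeq \PP_\Sigma$; this is where smoothness of $\Sigma$ enters in an essential way (for a non-smooth simplicial fan one would have to replace $\R[x_1,\ldots,x_r]/I_{SR}$ by a suitable subring). I expect this to be the main (and really only) obstacle, but since the argument is cone-local and standard it should not cause any difficulty.
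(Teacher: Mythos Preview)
Your approach is correct and is exactly what the paper has in mind: the proposition is introduced in the paper with the sentence ``We can now reformulate Theorem~\ref{thm:US}'', and the reformulation is precisely via the Billera-type identification $\R[x_1,\ldots,x_r]/I_{SR}\simeq \PP_\Sigma$ for smooth complete $\Sigma$ that you spell out. The paper itself does not supply a proof here (it simply cites \cite[Proposition~4.10]{roch}), so there is nothing further to compare against; your translation of the linear generators $c(\lambda)-\sum_i\langle e_i,\lambda\rangle x_i$ into $c(\lambda)\otimes 1 - 1\otimes\langle\cdot,\lambda\rangle$ is the right computation.
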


Let $\PP$ be the set of all piecewise polynomial functions on smooth projective fans in $N_\R$, i.e., $\PP = \bigcup_\Sigma \PP_\Sigma$ where the union is taken over all smooth projective fans $\Sigma \subseteq N_\R$.

\begin{theorem}[{\cite[Theorem 4.11]{roch}}]\label{thm:condjoh}
  We have that
  \[
    C^*_\R(G/H)  \simeq \left( H^*(G/P,\R) \otimes \PP \right) / \left\langle c(\lambda)\otimes 1 - 1\otimes \langle \cdot, \lambda \rangle \colon \lambda \in M \right\rangle \text{,}
  \]
  where $\langle \cdot, m \rangle \in \Sym(M_\R)$ is a globally linear function on $N_\R$.
\end{theorem}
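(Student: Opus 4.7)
The plan is to realize $C^*_\R(G/H)$ as a direct limit of the piecewise-polynomial presentations from the immediately preceding proposition (\cite[Proposition 4.10]{roch}) and then check that the colimit admits the closed form in the statement.

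First, by Luna--Vust theory combined with Theorem~\ref{thm:DP}, every complete horospherical embedding $G/H \hookrightarrow Y$ is dominated by some $E_{X_\Sigma}$ with $\Sigma \subseteq N_\R$ a smooth projective fan. Hence these $E_{X_\Sigma}$ are cofinal in the direct system of Theorem~\ref{thm:DP}, and
\[
  C^*_\R(G/H) = \varinjlim_\Sigma H^*(E_{X_\Sigma}, \R),
\]
where $\Sigma$ ranges over smooth projective fans under refinement, with transition maps given by the pullbacks along the $G$-equivariant birational morphisms $E_{X_\Sigma} \to E_{X_{\Sigma'}}$ induced by $\Sigma$ refining $\Sigma'$.

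Second, the preceding proposition identifies each term of the colimit as
\[
  H^*(E_{X_\Sigma}, \R) \simeq (R \otimes \PP_\Sigma)/J_\Sigma, \qquad R \coloneqq H^*(G/P, \R), \quad J_\Sigma \coloneqq \langle c(\lambda) \otimes 1 - 1 \otimes \langle \cdot, \lambda\rangle \colon \lambda \in M\rangle.
\]
The essential step is to assemble these isomorphisms into a compatible direct system. Whenever $\Sigma$ refines $\Sigma'$, any piecewise polynomial function on $\Sigma'$ is automatically piecewise polynomial on $\Sigma$, producing a graded inclusion $\PP_{\Sigma'} \hookrightarrow \PP_\Sigma$ that manifestly maps $J_{\Sigma'}$ into $J_\Sigma$ and so descends to a ring homomorphism between the quotients. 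I would then verify that this algebraic map coincides with the geometric pullback under the proposition. It suffices to check this on generators: an element $1 \otimes h'_i$ for a ray of $\Sigma'$ corresponds to the Poincar\'e dual of the associated $T$-invariant divisor of $E_{X_{\Sigma'}}$, and its image in $\PP_\Sigma$ decomposes as the sum of the piecewise linear generators associated to the rays of $\Sigma$ subdividing that ray, which is precisely the classical toric pullback of the divisor along $E_{X_\Sigma} \to E_{X_{\Sigma'}}$; the $R$-factor is transported tautologically.

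Third, direct limits commute with tensor products and with quotients by ideals, and $\bigcup_\Sigma \PP_\Sigma = \PP$, so passage to the colimit gives
\[
  C^*_\R(G/H) \simeq \bigl(R \otimes \PP\bigr)/\langle c(\lambda) \otimes 1 - 1 \otimes \langle \cdot, \lambda\rangle \colon \lambda \in M\rangle,
\]
which is the statement. The main obstacle is the compatibility verification in the second step: one needs to translate the purely combinatorial inclusion $\PP_{\Sigma'} \hookrightarrow \PP_\Sigma$ into the topological pullback on cohomology of the associated toric bundles. Once this is established on piecewise linear functions via the classical toric divisor pullback, it extends to all of $\PP_\Sigma$ because $\PP_\Sigma$ is generated as a ring over $\Sym(M_\R)$ by its piecewise linear elements; the remainder of the argument is a routine colimit manipulation.
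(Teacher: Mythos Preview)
The paper does not supply its own proof of this statement; it is simply quoted from \cite[Theorem 4.11]{roch}. Your strategy---pass to the direct limit in the piecewise-polynomial presentation of the preceding proposition---is exactly the expected one and is what the cited reference does.

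One step in your sketch is misphrased, however. You claim that $1 \otimes h'_i$ ``decomposes as the sum of the piecewise linear generators associated to the rays of $\Sigma$ subdividing that ray.'' Rays are one-dimensional cones and are not subdivided under a fan refinement; a refinement $\Sigma$ of $\Sigma'$ retains every ray of $\Sigma'$ and inserts new rays in the interiors of higher-dimensional cones. The piecewise linear function $h'_i \in \PP_{\Sigma'}$ (value $1$ at $e_i$, value $0$ at the other primitive ray generators of $\Sigma'$), viewed in $\PP_\Sigma$, is $\sum_j h'_i(e_j)\, h_j$, the sum running over \emph{all} rays $\rho_j$ of $\Sigma$, where $h'_i(e_j)$ is obtained by evaluating the $\Sigma'$-piecewise-linear function at $e_j$. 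On the geometric side this is precisely the pullback formula for the $T$-invariant Cartier divisor $D'_i$ along the toric morphism $X_\Sigma \to X_{\Sigma'}$, namely $\pi^* D'_i = \sum_j h'_i(e_j)\, D_j$, which then carries over fibrewise to $E_{X_\Sigma} \to E_{X_{\Sigma'}}$. With this correction the compatibility check goes through (and extends to all of $\PP_{\Sigma'}$ since, for smooth $\Sigma'$, the ring $\PP_{\Sigma'}$ is generated as an $\R$-algebra by its piecewise linear elements), and the remainder of your colimit argument is sound.
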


\begin{remark}
  Such a description of the ring of conditions as in Theorem~\ref{thm:condjoh} was certainly known to De Concini and Procesi although we couldn't find any references.
  Their approach relied on a computation of equivariant cohomology of toroidal compactifications of horospherical homogeneous spaces.
\end{remark}

%%%%%%%%%%%%%%%%%%%%%%%%%%%%%%%%%%%%%%%%%%%%%%%%%%%%%%%%%%%%%%%%%

\section{\texorpdfstring{Cohomology rings of toric bundles with $H^*(B,\R)$ generated in degree 2}{Cohomology rings of toric bundles with H*(B,R) generated in degree 2}} \label{sec:base-var}
The classical \BKK theorem is concerned with intersections of divisors in toric varieties: it computes the self-intersection polynomial on $H^2(X_\Sigma)$, i.e. $f([D]) = [D]^{\dim{X_\Sigma}}$ for divisors $D \subseteq X_\Sigma$.
Here we use our generalisation of \BKK to compute the self-intersection polynomial on the second cohomology group of a toric bundle.
We then use this to compute the cohomology ring of toric bundles for which $H^*(B,\R)$ is generated in degree 2. 

\subsection{\texorpdfstring{Self-intersection polynomial on $H^2(E_X)$}{Self-intersection polynomial on H2(EX)}}
The Leray-Hirsch theorem (Theorem~\ref{thm:Leray-Hirsch}) yields a surjection
\[
    \bar \rho\colon H^2(B,\R)\oplus \Pm_\Sigma \to H^2(E_X,\R),\qquad (\gamma, \Delta) \mapsto p^*(\gamma) + \rho(\Delta).
\] 
Therefore, it suffices to compute the self-intersection polynomial $f\colon H^2(B,\R)\oplus \Pm_\Sigma \to \R$ given by $f(x)=0$ if $B$ has odd (real) dimension and $f(x) = \bar \rho(x)^{n+s}$ if the (real) dimension of $B$ is even, say $k=2s$.
Clearly, the case of even dimensions $\dim_\R B = k = 2s$ needs further consideration.

Let $W\subseteq V$ be two $\R$-vector spaces, and $\Sigma\subseteq W^*$ be a fan.
Let us denote by $\Am_{W,\Sigma}^+$ the \emph{cone of $W$-affine polytopes}, i.e. the cone of polytopes $\Delta\subseteq V$ whose affine span is parallel to $W$ such that the normal fan of a translation of $\Delta$ into $W$ is a coarsening of $\Sigma$.
Let us denote by $\Am_{W,\Sigma}$ the linear subspace of the vector space of virtual polytopes in $V$ generated by $\Am_{W,\Sigma}^+$.
Similarly $\Am_W^+$ (resp.~$\Am_W$) denotes the cone of all polytopes (resp.~the subspace of all virtual polytopes) in $V$ whose affine span is parallel to $W$.
Every virtual polytope $\Delta\in \Am_W$ admits a (non-unique) representation $\Delta =\Delta_0 + v$ for some virtual polytope $\Delta_0$ in $W$ and $v\in V$. 
\begin{remark}\label{rem:vwell}
    Although the above representation of $\Delta$ as sum $\Delta_0 + v$ is not unique, the image of $v$ under the natural projection $\pi \colon V \to V/W$ is well-defined.
    In particular, a choice of a complementary subspace $W^\perp\subseteq V$ to $W$ induces a decomposition of vector spaces:
    \[
        \Am_{W,\Sigma} = \Pm_{W,\Sigma} \oplus W^\perp \quad \text{and} \quad \quad \Am_W = \Pm_W \oplus W^\perp.
    \]
\end{remark}

Let us fix a Lebesgue measure $\mu$ on $W$.
We can regard $\mu$ as a translation invariant Lebesgue measure on any affine subspace in $V$ parallel to $W$.
Thus any smooth function $g\colon V \to \R$ gives rise to an integral functional:
\[
    I_g \colon \Am_W \to \R, \quad \Delta \mapsto \int_\Delta g \diff \mu.
\]
In particular, if $g\colon V\to \R$ is a homogeneous polynomial of degree $d$, then, by Theorem~\ref{ultimatepoly} and Remark~\ref{rem:vwell}, $I_g$ is a homogeneous polynomial on $\Am_{W}$ of degree $d+\dim(W)$. 

Let $p \colon E\to B$ be a torus bundle and let $\Sigma$ be a smooth projective fan with associated toric  variety $X$ and corresponding toric bundle $E_X$.
Consider a pair of vector spaces:
\[
    W=M_\R\subseteq M_\R\oplus H^2(B,\R) = V.
\]
As before, we have a linear map $ \bar\rho \colon \Am_{M_\R,\Sigma} \to H^2(E_X,\R)$ given by
\[
    \bar\rho(\Delta) = \rho(\Delta_0) + p^*(\gamma),
\]
where $\Delta=\Delta_0+\gamma$ for a virtual polytope $\Delta_0$ in $M_\R$ and $\gamma \in  H^2(B,\R)$.
Note, by Remark~\ref{rem:vwell}, the representation of $\Delta$ as sum $\Delta_0+\gamma$ is unique and hence the map $\bar\rho$ is well defined.
Furthermore, we have a map $ \bar c\colon V\to H^2(B,\R)$:
\[
    \bar c(\lambda, \gamma) = c(\lambda) +\gamma.
\]

\begin{theorem}\label{thm:BKKdeg2}
    In the notation as above we have
    \[
        s!\cdot\bar\rho(\Delta)^{n+s} = (n+s)!\cdot\int_\Delta \bar c(x)^s \diff \mu
    \]
    for any $\Delta\in\Am_{M_\R,\Sigma}$.
\end{theorem}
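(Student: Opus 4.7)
The plan is to reduce Theorem~\ref{thm:BKKdeg2} to the already-proven Theorem~\ref{BKK} via a binomial expansion. Using the complement $H^2(B,\R) \subseteq V$ to $W = M_\R$, Remark~\ref{rem:vwell} gives a direct sum decomposition $\Am_{M_\R,\Sigma} = \Pm_\Sigma \oplus H^2(B,\R)$, so I can uniquely write any $\Delta \in \Am_{M_\R,\Sigma}$ as $\Delta = \Delta_0 + \gamma$ with $\Delta_0 \in \Pm_\Sigma$ and $\gamma \in H^2(B,\R)$. With this splitting, $\bar\rho(\Delta) = \rho(\Delta_0) + p^*(\gamma)$, and points of $\Delta$ are of the form $(x,\gamma)$ with $x \in \Delta_0$, so that the integrand on the right-hand side reads $\bar c(x,\gamma)^s = (c(x) + \gamma)^s$.

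Next, I would expand $\bar\rho(\Delta)^{n+s} = (\rho(\Delta_0) + p^*(\gamma))^{n+s}$ by the binomial theorem. Since $p^*(\gamma)^i = p^*(\gamma^i)$ vanishes for $i > s$ (as $\gamma^i \in H^{2i}(B,\R)$ lives above the top degree of $B$), only the terms with $i = s-j$ for $0 \le j \le s$ survive. Theorem~\ref{BKK} applied to the class $\gamma^{s-j} \in H^{k-2j}(B,\R)$ then evaluates each surviving term as $\rho(\Delta_0)^{n+j} \cdot p^*(\gamma^{s-j}) = \tfrac{(n+j)!}{j!} \int_{\Delta_0} c(x)^j \cdot \gamma^{s-j} \, d\mu$. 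Combining this with the identity $\binom{n+s}{s-j} \tfrac{(n+j)!}{j!} = \tfrac{(n+s)!}{s!} \binom{s}{j}$, the sum collapses under the binomial theorem inside the integral to $\tfrac{(n+s)!}{s!} \int_{\Delta_0} (c(x) + \gamma)^s \, d\mu$, which is exactly $\tfrac{(n+s)!}{s!} \int_\Delta \bar c(y)^s \, d\mu(y)$ by the identification in the first paragraph.

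Both sides of the identity are polynomial functions of $\Delta$ on $\Am_{M_\R,\Sigma}$ of the same degree $n+s$ (the right-hand side by Theorem~\ref{ultimatepoly} combined with the decomposition in Remark~\ref{rem:vwell}, the left-hand side because $\bar\rho$ is linear), so it suffices to establish the identity on the full-dimensional cone $\Am_{M_\R,\Sigma}^+$, where the binomial manipulation above applies directly and integrals have their naive meaning. I do not expect a serious obstacle here: the argument is essentially formal once Theorem~\ref{BKK} is in place, and the only points requiring care are bookkeeping the binomial coefficients and verifying that $\bar\rho$ and $\bar c$ interact with the chosen splitting in the way claimed, which they do by their very definitions.
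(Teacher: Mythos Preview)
Your proposal is correct and follows essentially the same approach as the paper: decompose $\Delta = \Delta_0 + \gamma$ via Remark~\ref{rem:vwell}, binomially expand $(\rho(\Delta_0)+p^*(\gamma))^{n+s}$, apply Theorem~\ref{BKK} termwise, and recombine using the identity $\binom{n+s}{s-j}\tfrac{(n+j)!}{j!}=\tfrac{(n+s)!}{s!}\binom{s}{j}$ to recognize $\int_{\Delta_0}(c(x)+\gamma)^s\,d\mu$. The only cosmetic difference is your explicit remark about restricting to the cone $\Am_{M_\R,\Sigma}^+$ and extending by polynomiality, which the paper leaves implicit.
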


\begin{proof}
    By Remark~\ref{rem:vwell}, we can write $\Delta = \Delta_0 + \gamma$ for unique $\Delta_0\in \Pm_{M_\R,\Sigma}$ and $\gamma\in H^2(B,\R)$.
    Thus we have 
    \[
         \bar\rho(\Delta)^{n+s} = (\rho(\Delta_0) + p^*(\gamma))^{n+s} = \sum_{i=0}^{s} \binom{n+s}{i} \underbrace{p^*(\gamma)^i \cdot \rho(\Delta_0)^{n+s-i}}_{F_{\gamma^i}(\Delta_0)} = \frac{(n+s)!}{s!}\cdot \sum_{i=0}^s \binom{s}{i} I_{\gamma^i}(\Delta_0),
    \]
    where in the last equality follows by Theorem~\ref{BKK}.
    On the other hand we have
    \[
        \int_\Delta \bar c(x)^s \diff \mu = \int_{\Delta_0+\gamma} \bar c(x)^s \diff \mu = \int_{\Delta_0} \bar c(x+\gamma)^s \diff \mu = \int_{\Delta_0} (c(x)+\gamma)^s \diff \mu = \sum_{i=0}^s \binom{s}{i} I_{\gamma^i}(\Delta_0).
    \]
    Therefore, we obtain
    \[
        s! \cdot \bar \rho(\Delta)^{n+s} = (n+s)! \cdot \sum_{i=0}^s \binom{s}{i} I_{\gamma^i}(\Delta_0) = (n+s)!\cdot \int_\Delta \bar c(x)^s \diff \mu. \qedhere
    \]
\end{proof}

\subsection{\texorpdfstring{Cohomology rings of toric bundles with $H^*(B)$ generated in degree 2}{Cohomology rings of toric bundles with H*(B) generated in degree 2}}
Let us discuss a description of the cohomology ring of toric bundles when $H^*(B)$ is generated in degree $2$.
Then all odd degrees of $H^*(B, \R)$ vanish, and thus the (real) dimension of $B$ is even, say $k = 2s$.
Furthermore, the cohomology ring $H^*(E_X, \R)$ of the toric bundle $E_X$ associated to a toric variety $X = X_\Sigma$ for a smooth projective fan $\Sigma$ is also generated in degree $2$ (use, e.g., the Leray-Hirsch Theorem, see Theorem~\ref{thm:Leray-Hirsch}).
If the complex dimension of the torus is $n$, then the real dimension of $E_X$ is even, namely $2(n+s)$.

Since all odd degrees of $H^*(E_X, \R)$ vanish, we consider the grading $H^{*/2}(E_X, \R)$, i.e., we consider degrees divided by $2$.
Note in this situation a ring is graded-commutative if and only if the ring is commutative.
These notational changes will allow more ergonomic statements below.

Our goal is to extend the description of the cohomology ring of smooth projective toric varieties in terms of differential operators and the volume polynomial to this situation (see Theorem~\ref{PKh}).
For the reader's convenience, we briefly recall the crucial steps of this description.

Let $V$ be a finite-dimensional vector space over a field $\K$ of characteristic $0$ and let $\Diff(V)$ be the ring of differential operators on $V$ with constant coefficients.
A choice of a basis for $V$, induces an isomorphism of $\Diff(V)$ with the ring of polynomials in symbols $\partial_i$ corresponding to partial derivatives along the $i$-th coordinate (induced by the chosen basis).
The ring $\Diff(V)$ acts naturally on the ring $\Sym(V^*)$ of polynomials on $V$.
In order to describe the cohomology ring of the toric bundle $E_X$ in the case that $c$ is surjective, we use the following theorem which is an analogue of Theorem~\ref{thm-nsdquatient} for commutative algebras which are generated in degree $1$.
It is implicitly contained in the work of Pukhlikov and the second author (see \cite{KP}, see also \cite[Theorem~1.1.]{KavehVolume}).

\begin{theorem}\label{thm-comm-alg}
    Let $A$ be a commutative finite dimensional graded algebra over $\K$.
    Write $A = \bigoplus_{i=0}^{n} A^i$ where $A^i$ is the $i$-th graded piece of $A$.
    Suppose the following conditions hold:
    \begin{enumerate}
    \item
        $A$ is generated (as an algebra) by $A^1$, and $A^0 \simeq A^n \simeq \K$.
    \item
        The bilinear map $A^i \times A^{n-i} \to A^n \simeq \K$ given by $(u,v) \mapsto u\cdot v$ is non-degenerate for all $i = 0, \ldots, n$.
    \end{enumerate}
    If $\rho \colon V\to A^1$ is a surjective linear map, then $A$ is isomorphic to $\Diff(V)/ \Ann(f)$ as graded algebra.
    Here, $f(v) \coloneqq \rho(v)^n\in A^n \simeq \K$ is a homogeneous polynomial of degree $n$ on $V$ and $\Ann(f)=\{\diff\in \Diff(V) \colon \diff f=0\}$ is its annihilator.
\end{theorem}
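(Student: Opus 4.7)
The plan is to derive Theorem~\ref{thm-comm-alg} as a direct consequence of Theorem~\ref{thm-nsdquatient}. Since $A$ is generated in degree $1$ by $A^1$ and $\rho \colon V \to A^1$ is surjective, the universal property of the symmetric algebra promotes $\rho$ to a surjective morphism of graded $\K$-algebras $\widetilde\rho \colon \Sym(V) \to A$, where $V$ is placed in degree $1$. Identifying $\Sym(V)$ with $\Diff(V)$ by sending $v \in V$ to the directional derivative $\partial_v$, we obtain a graded surjection $B \coloneqq \Diff(V) \twoheadrightarrow A$. By hypothesis $A$ is an $n$-self-dual factor algebra of $B$, so Theorem~\ref{thm-nsdquatient} (applied with the chosen identification $\phi \colon A^n \xrightarrow{\sim} \K$) gives
\[
  A \simeq \Diff(V)/I(L_\ell), \qquad \ell(D) = \phi(\widetilde\rho(D)) \text{ for } D \in \Diff(V)^n,
\]
with $\ell$ extended by zero to the other graded pieces.

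The remaining task is to identify $I(L_\ell) = \Ann(f)$. Fix a basis $e_1, \ldots, e_m$ of $V$ with dual coordinates $x_1, \ldots, x_m$, so that $f(x_1, \ldots, x_m) = \rho(x_1 e_1 + \cdots + x_m e_m)^n \in \Sym(V^*)^n$ under $\phi$. Expanding $f$ with the multinomial formula and computing $(\partial^\beta f)(0) = \beta! \cdot [x^\beta]f$ for $|\beta| = n$ gives the key normalization
\[
  (\partial^\beta f)(0) = n! \cdot \phi(\widetilde\rho(\partial^\beta)), \qquad \text{equivalently}\qquad \ell(D) = \tfrac{1}{n!}(Df)(0) \text{ for } D \in \Diff(V)^n.
\]
Since $\Diff(V)$ is commutative, for a homogeneous $D \in \Diff(V)^k$ and arbitrary $E \in \Diff(V)^{n-k}$ we obtain $\ell(DE) = \tfrac{1}{n!}(DEf)(0) = \tfrac{1}{n!}(E(Df))(0)$.

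Now $Df$ is a homogeneous polynomial of degree $n-k$ on $V$, so it vanishes identically if and only if all of its order-$(n-k)$ partial derivatives vanish at the origin. Translating through the previous identity, this is exactly the condition $\ell(DE) = 0$ for every $E \in \Diff(V)^{n-k}$; since $\ell$ is $n$-homogeneous this is the same as $\ell(DE) = 0$ for all $E \in \Diff(V)$, i.e.\ $D \in I(L_\ell)$. Hence $\Ann(f)^k = I(L_\ell)^k$ for every $k$, and since both ideals are homogeneous this yields $\Ann(f) = I(L_\ell)$, completing the proof.

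The conceptual content is almost entirely contained in Theorem~\ref{thm-nsdquatient}; the genuinely new step is the translation between $\ell$ (defined intrinsically via the quotient map to $A^n$) and the annihilator of $f$ (defined via the $\Sym(V)$-action on $\Sym(V^*)$). The main point requiring care is therefore the combinatorial identity $(\partial^\beta f)(0) = n!\cdot \phi(\widetilde\rho(\partial^\beta))$, which encodes the standard perfect pairing between $\Sym(V)$ and $\Sym(V^*)$ in top degree and supplies the normalizing constant $n!$ needed for the equivalence.
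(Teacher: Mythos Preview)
Your proof is correct and follows essentially the same route as the paper: both promote $\rho$ to a graded surjection $\Diff(V)\simeq\Sym(V)\twoheadrightarrow A$, invoke Theorem~\ref{thm-nsdquatient} to obtain $A\simeq\Diff(V)/I(L_\ell)$, establish the normalization $\ell(D)=\tfrac{1}{n!}\,Df$ for $D$ of degree $n$ (the paper via polarization, you via the multinomial expansion --- these are equivalent), and then identify $I(L_\ell)=\Ann(f)$ degree by degree using that a homogeneous polynomial vanishes iff all top-order derivatives kill it. The only cosmetic gap is that your phrase ``homogeneous of degree $n-k$'' tacitly assumes $k\le n$; the paper handles $k>n$ in a separate sentence, but in that range both ideals trivially equal $\Diff(V)^k$, so nothing is lost.
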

We provide a proof to illustrate how Theorem~\ref{thm-comm-alg} follows from Theorem~\ref{thm-nsdquatient}.
\begin{proof}
    Since the algebra $A$ is generated in degree $1$, the surjective linear map $\rho \colon V\to A^1$ induces a surjection $p\colon \Diff(V) \simeq \Sym(V) \to A$ of graded algebras.
   By Theorem~\ref{thm-nsdquatient}, there is an isomorphism $A \simeq \Diff(V)/ I(L_\ell)$ where $\ell\colon \Diff(V) \to \K$ is uniquely defined by its values on the $n$-th graded piece $\Diff^n(V)$ of $\Diff(V)$, namely $\ell = \phi \circ p\colon \Diff^n(V)\to \K$ ($\phi\colon A^n \to \K$ a fixed isomorphism).
    It remains to show $I(L_\ell) = \Ann(f)$.
    Since both ideals are homogeneous, it suffices to check $I(I_\ell) \cap \Diff^j(V) = \Ann(f) \cap \Diff^j(V)$ for all $j$. 
    
    Let $v_1,\ldots, v_n\in V$ and recall $L_{v_i}$ denotes the Lie derivative with respect to the vector $v_i$ (see Definition~\ref{def:Lie}).
    The value $\ell(L_{v_1}\cdots L_{v_n})$ at the corresponding differential monomial is given by polarization of the polynomial $f \colon V\to \K, v \mapsto \phi(\rho(v)^n)$ (notice the implicit choice of an isomorphism $\phi\colon A^n \to \K$ in the statement):
    \[
        \ell(L_{v_1} \cdots L_{v_n}) = \frac{1}{n!} L_{v_1}\cdots L_{v_n} f \text{.}
    \]
    Extending by linearity from the product of linear forms to all of $\Diff^n(V)$, we obtain
    \[
        \ell(\diff) = \frac{1}{n!} \diff f \qquad \text{ for any } \diff \in \Diff^n(V) \text{.}
    \]
    Clearly, for any $j>n$, we have $I(L_\ell) \cap \Diff^j(V)= \Ann(f)\cap \Diff^j(V)= \Diff^j(V)$.
    
    Let $d \in \Ann(f)$ be a differential operator of degree $j\le n$.
    Then $d'd f = 0$ for any $d'\in \Diff(V)$, and thus $d\in I(L_\ell)$.
    If $d \notin \Ann(f)$, then $df$ is a non zero polynomial of degree $n-j$.
    There exists $d'\in \Diff^{n-j}(V)$ with $d'df\ne 0$.
    It follows that $\ell(d'd) \ne 0$ which implies $d\notin I(L_\ell)$.
    So $I(L_\ell) = \Ann(f)$ and the theorem is proved.
\end{proof}

Theorems~\ref{thm:BKKdeg2} and~\ref{thm-comm-alg} yield the following description of the cohomology ring $H^*(E_X,\R)$ of toric bundles over a base manifold with cohomology ring generated in degree $2$.

\begin{corollary}\label{cohdeg2}
    Let $p \colon E\to B$ be a torus bundle, $\Sigma$ be a smooth projective fan, and $E_X$ be the associated toric bundle.
    Suppose $H^2(B,\R)$ is generated in degree $2$.
    Then the cohomology ring $H^*(E_X, \R)$ is given by
    \[
        H^{*/2}(E_X,\R)\simeq \Diff(\Am_{M_\R,\Sigma}) / \Ann(I), 
    \]
    where $I \colon \Am_{M_\R,\Sigma} \to \R$ is the homogeneous polynomial of degree $n+s$ given by $I(\Delta) = \int_\Delta \bar c(x)^s \diff \mu$.
\end{corollary}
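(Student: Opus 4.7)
The plan is to apply Theorem~\ref{thm-comm-alg} to the commutative algebra $A=H^{*/2}(E_X,\R)$ with the linear map $\bar\rho\colon\Am_{M_\R,\Sigma}\to A^1=H^2(E_X,\R)$, and then to identify the resulting top-degree polynomial with $I$ (up to a scalar) via Theorem~\ref{thm:BKKdeg2}.

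First I verify the hypotheses of Theorem~\ref{thm-comm-alg}. Since $H^*(B,\R)$ is generated in degree $2$, the odd cohomology of $B$ vanishes, so $B$ has even real dimension $k=2s$; combining this with the Leray-Hirsch isomorphism of Corollary~\ref{cor:cohmodule} and the vanishing of odd-degree cohomology of the smooth complete toric variety $X$, the graded commutative algebra $H^*(E_X,\R)$ is concentrated in even degree, hence commutative after the re-grading $H^{*/2}$. Poincar\'e duality on the smooth compact oriented manifold $E_X$ provides the non-degeneracy of the pairing and the identifications $A^0\simeq A^{n+s}\simeq \R$. Generation in degree $1$ follows since $H^*(X,\R)$ is generated by divisor classes and $H^*(B,\R)$ is generated in degree $2$ by hypothesis; via Leray-Hirsch one can take the Leray-Hirsch basis to consist of products of the classes $[D_i]\in H^2(E_X,\R)$, so $A$ is generated as a ring by $p^*H^2(B,\R)$ together with $[D_1],\dots,[D_r]$, all lying in $A^1$.

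To see that $\bar\rho$ is surjective, decompose $H^2(E_X,\R)\simeq p^*H^2(B,\R)\oplus H^2(X,\R)$ via Leray-Hirsch. The $H^2(B,\R)$-summand of $\Am_{M_\R,\Sigma}$ (cf.\ Remark~\ref{rem:vwell}) hits $p^*H^2(B,\R)$, while $\rho$ surjects onto $H^2(X,\R)=\Pm_\Sigma/M_\R$. Theorem~\ref{thm-comm-alg} then yields $A\simeq\Diff(\Am_{M_\R,\Sigma})/\Ann(f)$, where $f(\Delta)=\bar\rho(\Delta)^{n+s}$ is an $\R$-valued homogeneous polynomial of degree $n+s$ after identifying $A^{n+s}$ with $\R$ via the orientation of $E_X$. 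Finally, Theorem~\ref{thm:BKKdeg2} gives $s!\,f(\Delta)=(n+s)!\,I(\Delta)$, so $f$ and $I$ are proportional and $\Ann(f)=\Ann(I)$, concluding the proof. The main substantive input is Theorem~\ref{thm:BKKdeg2}; once one has the BKK-type identity, the remaining steps are formal consequences of Leray-Hirsch, Poincar\'e duality, and the commutative-algebra framework of Section~\ref{commalg}.
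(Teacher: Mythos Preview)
Your proof is correct and follows the same route as the paper: verify the hypotheses of Theorem~\ref{thm-comm-alg} for $A=H^{*/2}(E_X,\R)$ with the surjection $\bar\rho$, then invoke Theorem~\ref{thm:BKKdeg2} to identify the self-intersection polynomial with $I$ up to a nonzero scalar. The paper's own proof is terser because the hypothesis checks (even-degree concentration, generation in degree~$2$, surjectivity of $\bar\rho$) are dispatched in the discussion preceding the corollary rather than inside the proof, but the logical content is the same.
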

\begin{proof}
    By Remark~\ref{rem:vwell}, we have  $\Am_{M_\R,\Sigma} = \Pm_{M_\R,\Sigma}\oplus H^2(B,\R)$, and thus the map $\bar \rho \colon \Am_{M_\R,\Sigma} \to H^2(E_X,\R)$ is surjective.
    Moreover, by Theorem~\ref{thm:BKKdeg2}, we have:
    \[
        s! \cdot \bar\rho(\Delta)^{n+s} = (n+s)! \cdot \int_\Delta \bar c(x)^s \diff \mu,
    \]
    so the statement follows from Theorem~\ref{thm-comm-alg}.
\end{proof}

\begin{remark}\label{rem:csurj}
    From Proposition~\ref{cherneq}, it straightforwardly follows that in Corollary~\ref{cohdeg2} we may replace $V = M_\R\oplus H^2(B,\R)$ by $V' = M_\R\oplus c(M_\R)^\perp$ where $c(M_\R)^\perp \subseteq H^2(B,\R)$ is a complementary subspace of $c(M_{\R})$ in $H^2(B,\R)$.
    Then $c(M_\R)^\perp \cong \big(H^2(B,\R)/c(M_\R)\big)$.
    In particular, if $c\colon M_\R\to H^2(B,\R)$ is surjective, the cohomology ring $H^*(E_X, \R)$ is given by
    \[
        H^{*/2}(E_X,\R)\simeq \Diff(\Pm_{M_\R,\Sigma}) / \Ann(I), 
    \]
    where $I \colon \Pm_{M_\R,\Sigma} \to \R$ is the homogeneous polynomial of degree $n+s$ given by $I(\Delta) = \int_\Delta c(x)^s \diff \mu$.
\end{remark}

%%%%%%%%%%%%%%%%%%%%%%%%%%%%%%%%%%%%%%%%%%%%%%%%%%%%%%%%%%%%%%%%%
\section{Examples}
\label{sec:examples}
Here we consider several concrete applications of our results.
Section~\ref{sec:base-var} will enable us to retrieve a version of the Brion-Kazarnovskii theorem \cite{kaz87, bri89} for horospherical varieties.
For the special case of toric bundles over a full flag variety, we obtain two descriptions of the cohomology ring: one via integrals of the Weyl polynomial, and another one relying on the theory of string polytopes.
The section is concluded by a reproduction of the famous description of the cohomology ring of projective bundles.

%%%%%%%%%%%%%%%%%%%%%%%%%%%%%%%%%%%%%%%%%%%%%%%%%%%%%%%%%%%%%%%%%
\subsection{The Brion-Kazarnovskii theorem for horospherical varieties}

Let $G$ be a connected semisimple simply connected group, $T$ be a maximal torus and $B$ be a Borel subgroup containing $T$.
Let $\Phi\subseteq M(T)$ be the corresponding set of roots and $S\subseteq \Phi$ be the set of simple roots corresponding to our choice of Borel $B$.
Recall parabolic subgroups containing $B$ are in one-to-one correspondence with subsets of simple roots.
Finally, let $C^+$ be the positive Weyl chamber in $M_\R(T)$.

Fix a parabolic subgroup $P_I\subseteq G$ containing $B$ given by the set of simple roots $I\subseteq S$.
The inclusion $i=i_I\colon B\to P_I$ gives rise to an inclusion of character lattices
\[
    i^*\colon M(P_I)\to M(B)\simeq M(T),
\]
where the image is the sublattice of $M(T)$ given by (here $\check{\alpha}$ denotes the coroot associated to the simple root $\alpha$)
\[
    i^*(M(P_I)) = \bigcap_{\alpha\in I}\ker\check{\alpha}\subseteq M(T).
\]
From now on we will identify the character lattice $M(P_I)$ with its image $i^*(M(P_I))$ in $M(T)$.
It is well-known that the horospherical homogeneous spaces that are torus principal bundles over $G/P_I$ are classified by sublattices $\Lambda \subseteq M(P_I)$ in the character lattice of $P_I$.
As with $M(P_I)$, we will identify $\Lambda$ with the corresponding sublattice of $M(T)$.
For a given sublattice $\Lambda\subseteq M(P_I)$, the horospherical subgroup $H_\Lambda\subseteq G$ is defined by
\[
    H_\Lambda = \{g\in P \,|\, \lambda(g)=1 \text{ for all } \lambda \in \Lambda\}.
\]
It is easy to see that the normalizer satisfies $N_G(H_\Lambda)= P_I$ and the factor group $T_\Lambda= P_I/H_\Lambda$ is an algebraic torus.
Hence the homogeneous space $G/H_\Lambda$ has a structure of a $T_\Lambda$-principal  bundle over $G/P_I$. 
We identify the character lattice of $T_\Lambda$ with $\Lambda$ via the pullback
\[
    \pi^*\colon M(T_\Lambda) \to M(P_I), \text{ where } \pi\colon P_I\to T_\Lambda \text{ is the natural projection}.
\]

Let $X$ be the $T_\Lambda$-toric variety associated to a fan $\Sigma$ in $\Lambda$ and $E_X$ be the corresponding toric bundle.
Equivalently, $E_X$ is the toroidal compactification of $G/H_\Lambda$ corresponding to $\Sigma$ (regarded as a colored fan).
The section concludes with a computation of the self-intersection polynomial on $H^2(E_X,\R)$ if $\Sigma$ is a smooth projective fan.

Let $c_\Lambda\colon M_\R(T_\Lambda) \to H^2(G/P_I,\R)$ be as before, i.e., the linear map of vector spaces induced by the homomorphism of latices:
\[
    c_\Lambda \colon M(T_\Lambda) \to H^2(G/P_I,\Z), \quad \lambda \mapsto c_1(\cL_\lambda).
\]
Note there is a map $c\colon M_\R(P_I) \to H^2(G/P_I,\R)$ which associates to a character $\lambda \in M_\R(P_I)$ the first Chern class $c_1(\cL_\lambda)\in H^2(G/P_I,\R)$ of the associated line bundle $\cL_\lambda = G \times_{P_I} \C_\lambda$.
For any $\Lambda\subseteq M(P_I)$, the linear map $c_\Lambda\colon M_\R(T_\Lambda)\simeq\Lambda_\R\to H^2(G/P_I,\R)$ coincides with the restriction of the linear map $c\colon M_\R(P_I) \to H^2(G/P_I,\R)$ as $G/H_\Lambda\times_{T_\Lambda}\C_\lambda = G\times_{P_I}\C_\lambda$ for any $\lambda \in \Lambda$.
We will denote all these maps by the same letter $c$.

We recall the following classical result by Borel~\cite{Borel}.
We found the expository paper~\cite{BGG} useful in our study of this beautiful subject.
Let $W = N_G(T)/T$ be the Weyl group of $G$ with respect to $T$.
\begin{theorem}\label{thm:Borel}
    The map $c\colon M_\R(P_I)\to H^2(G/P_I,\R)$ is an isomorphism.
\end{theorem}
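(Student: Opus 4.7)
The plan is to reduce the statement to the case of the full flag variety $G/B$ via the canonical fibration $\pi \colon G/B \to G/P_I$, and then invoke the classical Borel presentation of $H^*(G/B,\R)$ as a coinvariant algebra.

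For the base case $I = \emptyset$, Borel's theorem provides an isomorphism of graded rings
\[
H^*(G/B,\R) \simeq \Sym(M_\R(T)) \big/ \big\langle f \in \Sym(M_\R(T))^W : \deg(f) > 0 \big\rangle,
\]
where $\Sym^k(M_\R(T))$ sits in cohomological degree $2k$, and the generator $\lambda \in \Sym^1(M_\R(T))$ corresponds to $c_1(\cL_\lambda) \in H^2(G/B,\R)$. This is classically proved via the Serre spectral sequence of the Borel fibration $G/B \to BT \to BG$, which identifies $H^*(G/B,\R) \simeq H^*(BT,\R) \otimes_{H^*(BG,\R)} \R$; alternatively, one may argue via the Bruhat decomposition and Schubert calculus. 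Since $G$ is semisimple, the coroots $\{\check\alpha\}_{\alpha \in S}$ span $N_\R(T)$, so $M_\R(T)^W = \bigcap_{\alpha \in S}\ker\check\alpha = 0$; hence the relations in the coinvariant algebra are trivial in degree one, and $c$ restricts to an isomorphism $c \colon M_\R(T) \xrightarrow{\sim} H^2(G/B,\R)$.

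For general $I \subseteq S$, the fiber $P_I/B$ of $\pi$ is a full flag variety for a Levi subgroup $L_I \subseteq P_I$, so its cohomology vanishes in odd degrees. By Leray-Hirsch (Theorem~\ref{thm:Leray-Hirsch}), the pullback $\pi^* \colon H^*(G/P_I,\R) \to H^*(G/B,\R)$ is injective, and a standard invariant-theoretic argument identifies its image with $H^*(G/B,\R)^{W_I}$, where $W_I \subseteq W$ is the parabolic subgroup generated by the simple reflections $\{s_\alpha\}_{\alpha\in I}$. In degree $2$ the $W$-action on $H^2(G/B,\R) \simeq M_\R(T)$ is the natural reflection action, and each $s_\alpha$ fixes the hyperplane $\ker\check\alpha$, so
\[
H^2(G/P_I,\R) \simeq M_\R(T)^{W_I} = \bigcap_{\alpha\in I}\ker\check\alpha = M_\R(P_I).
\]
This identification is compatible with $c$ because $\pi^*\cL_\lambda = \cL_{i^*\lambda}$ (the formation of associated line bundles commutes with pullback), giving $\pi^* \circ c = c \circ i^*$ and completing the proof.

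The main obstacle is the first step: Borel's coinvariant presentation of $H^*(G/B,\R)$ is substantive and requires genuine input from Schubert calculus or equivariant cohomology. Once this input is available, the descent to $G/P_I$ is essentially formal, the only subtlety being a careful check that the $W_I$-action on $H^2(G/B,\R)$ coming from the fiber-bundle structure really coincides with the standard reflection action on $M_\R(T)$.
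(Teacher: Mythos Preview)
Your proof is correct and follows essentially the same approach as the paper: both arguments invoke the Borel coinvariant presentation for $H^*(G/B,\R)$ to establish the base case, then identify $H^2(G/P_I,\R)$ with the $W_I$-invariants $M_\R(T)^{W_I} = \bigcap_{\alpha\in I}\ker\check\alpha = M_\R(P_I)$. The only cosmetic difference is that the paper cites \cite[Proposition~1.3 and Theorem~5.5]{BGG} directly for both steps, whereas you sketch the underlying geometry (the Serre spectral sequence for the base case and Leray--Hirsch for the fibration $G/B\to G/P_I$).
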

\begin{proof}
    By \cite[Proposition~1.3]{BGG} the map $c \colon M_\R(B) \to H^2(G/B,\R)$ is an isomorphism.
    Ineed, by \cite[Proposition~1.3]{BGG}, $c\colon \Sym(M_\R(B))/J\to H^{*/2}(G/B,\R)$ is an isomorphism of graded algebras where J is the ideal generated by $W$-invariant elements in $\Sym(M_\R(B))$ of positive degree.
    However, there are no non-zero $W$-invariant elements in $M_\R(B) \subseteq \Sym(M_\R(B))$, and thus $c \colon M_\R(B)\to H^2(G/B,\R)$ is a linear isomorphism.

    Let $W_I$ be the subgroup of the Weyl group $W$ generated by the reflections $s_\alpha$ for $\alpha \in I$.
    By \cite[Theorem~5.5]{BGG}, the restriction of $c \colon M_\R(B) \to H^2(G/B,\R)$ to $M_\R(B)^{W_I}$ induces an isomorphism
    \[
        c \colon M_\R(B)^{W_I} \xrightarrow{\text{ $\simeq$ }} H^2(G/P_I,\R)\text{.}
    \]
    From above recall
    \[
        M(P_I) = \bigcap_{\alpha \in I} \ker{\check{\alpha}} = \{ \lambda \in M(B) \mid \langle \lambda, \check{\alpha} \rangle = 0 \, \text{for all} \, \alpha \in I\} \text{.}
    \]
    In other words, the sublattice $M(P_I)$ of $M(B)$ is obtained by intersecting $M(B)$ with the intersection of the reflection hyperplanes of the reflections $s_\alpha$ for $\alpha \in I$.
    This shows $M_\R(B)^{W_I} = M_\R(P_I)$ as desired.
\end{proof}

For a dominant weight $\lambda\in C^+$, let  us denote by $V_\lambda$ the irreducible representation of $G$ with highest weight $\lambda$.
We will also need the following well-known result.
\begin{theorem}[Borel-Weil-Bott theorem, see, e.g., {\cite[Theorem~6.1]{BorelWeil}}]
    \label{thm:BorelWeil}
    We have
    \[
        H^0(G/P_I,\cL_\lambda) = V_\lambda \qquad \text{for any dominant weight $\lambda \in C^+\cap M(P_I)$.}
    \]
\end{theorem}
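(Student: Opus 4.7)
The plan is to reduce to the classical Borel--Weil theorem on the full flag variety $G/B$ via the canonical projection $\pi \colon G/B \to G/P_I$. The fibers of $\pi$ are isomorphic to $P_I/B$, the flag variety of the Levi subgroup $L_I$ of $P_I$. The key observation is that since $\lambda \in M(P_I)$ satisfies $\langle \lambda, \check{\alpha} \rangle = 0$ for every $\alpha \in I$, the character $\lambda$ is trivial on the semisimple part of $L_I$, so the pullback $\pi^*\cL_\lambda$ restricts to the trivial line bundle on every fiber $P_I/B$.

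Combined with the vanishing $H^i(P_I/B, \Om) = 0$ for $i > 0$ (a consequence of the Bruhat decomposition of the flag variety $P_I/B$), this triviality on fibers would yield $\pi_* \pi^* \cL_\lambda = \cL_\lambda$ together with the vanishing of the higher direct images. The Leray spectral sequence then collapses to give
\[
  H^0(G/P_I, \cL_\lambda) \;\cong\; H^0(G/B, \pi^*\cL_\lambda),
\]
reducing the statement to the case of the full flag variety.

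For $G/B$, I would identify global sections of $\cL_\lambda$ with the algebraically induced representation, i.e.\ the space of regular functions $\{ f \colon G \to \C \mid f(gb) = \lambda(b)^{-1} f(g) \text{ for all } b \in B \}$ equipped with the left-regular $G$-action. By algebraic Frobenius reciprocity, the multiplicity of an irreducible $V_\mu$ in this induced module equals $\dim \Hom_B(V_\mu, \C_\lambda)$, which is the dimension of the $B$-semi-invariant line in $V_\mu$ of weight $\lambda$. Since $\lambda$ is dominant, such a line exists precisely when $\mu = \lambda$ (it is the lowest-weight line in $V_\lambda$, up to a choice of opposite Borel), and is then one-dimensional. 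Summing over irreducibles, this yields $H^0(G/B, \pi^*\cL_\lambda) \simeq V_\lambda$.

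The main obstacle is twofold: first, rigorous bookkeeping with conventions (choice of Borel, action of the Weyl element $w_0$, and signs of weights) so that the Frobenius reciprocity argument produces $V_\lambda$ rather than $V_\lambda^*$ or $V_{-w_0\lambda}$; and second, the justification of the vanishing of higher direct images on the fibers $P_I/B$, which is itself a Borel--Weil-type vanishing statement. The latter is most cleanly handled by first establishing vanishing for the trivial bundle on the smaller flag variety $P_I/B$ (e.g.\ via the Bruhat decomposition) before invoking the Leray reduction above.
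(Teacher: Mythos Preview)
The paper does not prove this statement at all: it is quoted as a classical result with a pointer to the literature (Griffiths--Schmid), and is used as a black box in the proof of Proposition~\ref{prop:degreeP}. There is therefore no ``paper's own proof'' to compare against.

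That said, your outline is a standard and essentially correct route to the Borel--Weil theorem. The reduction from $G/P_I$ to $G/B$ via $\pi_*\pi^*\cL_\lambda \cong \cL_\lambda$ is the right move, and your justification (triviality of $\lambda$ on the Levi's semisimple part, hence fiberwise triviality of the pullback bundle) is the correct mechanism. The Frobenius reciprocity computation on $G/B$ is also the usual argument. Your self-identified obstacles are accurate: the sign and $w_0$ bookkeeping is genuinely fiddly and depends on whether $\cL_\lambda$ is defined via $\C_\lambda$ or $\C_{-\lambda}$ (the paper uses $\cL_\lambda = G\times_{P_I}\C_\lambda$, which under some conventions would actually produce $V_\lambda^*$ rather than $V_\lambda$; this is immaterial for the paper's application, since only $\dim H^0$ matters for the degree computation). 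The fiber vanishing $H^i(P_I/B,\Om)=0$ for $i>0$ is fine and follows from the affine paving, as you note. So your sketch is sound, but be aware that the paper simply imports the result rather than arguing along these lines.
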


Weyl showed (see, for instance, \cite[Satz 5]{Weyl}) that the map $\lambda\mapsto \dim(V_\lambda)$ is the restriction of an explicit (inhomogeneous) polynomial on $M_\R(T)$ to the set of dominant weights.
Let $f_{W,I}$ be the top homogeneous part of the restriction of the Weyl polynomial to $M_\R(P_I)\subseteq M_\R(T)$.
We will need the following classical result.
\begin{proposition}\label{prop:degreeP}
    Let $G, P_I$ and $f_{W,I}$ be as before.
    Then for any $\lambda\in M_\R(P_I)$ we have
    \[
        c(\lambda)^s = s!\cdot f_{W,I}(\lambda) \qquad \text{where $s=\dim_\C(G/P_I)$.}
    \]
\end{proposition}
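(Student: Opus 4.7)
\smallskip

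The plan is to prove the identity by reducing to the case where $\lambda$ is a regular dominant weight, where the classical asymptotic Riemann--Roch argument together with the Weyl dimension formula applies, and then extend by polynomial identity. Both sides of the asserted equation, $\lambda \mapsto c(\lambda)^s$ and $\lambda \mapsto s!\cdot f_{W,I}(\lambda)$, are homogeneous polynomials of degree $s$ on the vector space $M_\R(P_I)$ (for the left-hand side this uses Theorem~\ref{thm:Borel} together with the linearity of $c$; for the right-hand side it holds by definition of $f_{W,I}$). Hence it suffices to verify the equality on a Zariski-dense subset of $M_\R(P_I)$, for instance on the lattice points in the interior of the relative dominant cone $C^+\cap M(P_I)$, which span a full-dimensional cone and are therefore Zariski-dense.

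Fix such a regular dominant weight $\lambda \in C^+ \cap M(P_I)$ so that the line bundle $\mathcal{L}_\lambda$ on $G/P_I$ is ample. The first main step is the asymptotic Riemann--Roch identity
\[
  c_1(\mathcal{L}_\lambda)^s = s!\cdot \lim_{k \to \infty} \frac{\dim H^0(G/P_I,\mathcal{L}_{k\lambda})}{k^s}\text{,}
\]
which follows from the fact that the Hilbert polynomial $k \mapsto \chi(G/P_I,\mathcal{L}_{k\lambda})$ has degree $s = \dim_\C(G/P_I)$ with leading term $\frac{c_1(\mathcal{L}_\lambda)^s}{s!}\cdot k^s$, together with Serre vanishing (or, more directly on a flag variety, Kodaira vanishing) ensuring that higher cohomology groups vanish and $\chi = \dim H^0$ for $k \gg 0$.

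The second main step is to identify the Hilbert polynomial combinatorially. Applying Theorem~\ref{thm:BorelWeil} to each multiple $k\lambda \in C^+\cap M(P_I)$ gives
\[
  \dim H^0(G/P_I,\mathcal{L}_{k\lambda}) = \dim V_{k\lambda}\text{,}
\]
and by the Weyl dimension formula this equals a polynomial in $k$ whose top-degree part is $k^s \cdot f_{W,I}(\lambda)$, directly from the definition of $f_{W,I}$ as the top homogeneous part of the Weyl polynomial restricted to $M_\R(P_I)$. Combining with the previous step yields
\[
  c(\lambda)^s = c_1(\mathcal{L}_\lambda)^s = s!\cdot f_{W,I}(\lambda)
\]
for every regular dominant $\lambda \in C^+\cap M(P_I)$, and the polynomial identity argument above concludes the proof.

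The main obstacle is the standard but delicate passage through asymptotic Riemann--Roch and the vanishing of higher cohomology required to identify $\chi$ with $\dim H^0$; everything else is bookkeeping with the Weyl dimension formula and a polynomial density argument. The one subtlety in the polynomial identity step is to ensure that the $\Z$-lattice of dominant weights inside $M(P_I)$ is Zariski-dense in $M_\R(P_I)$, which follows from the fact that the relative dominant chamber is an open cone of full dimension in $M_\R(P_I)$.
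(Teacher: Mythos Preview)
Your proof is correct and follows essentially the same route as the paper: both reduce by polynomial density to regular dominant $\lambda$, invoke Borel--Weil (Theorem~\ref{thm:BorelWeil}) to identify $H^0(G/P_I,\cL_{k\lambda})\cong V_{k\lambda}$, and read off $c(\lambda)^s$ as $s!$ times the leading coefficient of the resulting Hilbert polynomial via the Weyl dimension formula. The only cosmetic difference is that the paper phrases the degree computation through the projective embedding $G/P_I\hookrightarrow\p(V_\lambda^*)$ rather than asymptotic Riemann--Roch, and one small point worth tightening is that $f_{W,I}$ having degree exactly $s$ is not quite ``by definition'' but follows from noting that the Weyl factors for $\alpha\in\Phi^+_I$ become constant on $M_\R(P_I)$.
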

\begin{proof} 
    Since both sides of the equality are polynomials, it suffices to verify it on a subset of dominant weights whose convex hull is an affine full-dimensional cone in $M_\R(P_I)$.
    It's well-known that the line bundle $\cL_\lambda$ associated to $\lambda \in M(P_I)$ is very ample if and only if $\langle \lambda, \check{\alpha} \rangle > 0$ for all $\alpha \in S\setminus I$, i.e., the interior lattice points in the cone $C^+\cap M_\R(P_I)$ yield very ample line bundles.
    It suffices to verify the above equality on this discrete set of points.
 
 	Fix $\lambda \in C^+\cap M_\R(P_I)$ in the interior.
	The very ample line bundle $\cL_\lambda$ gives rise to an embedding of $G/P_I$ into the projective space $\mathbb{P}(V_\lambda^*)$ (see Theorem~\ref{thm:BorelWeil}).
    Note the degree of $\cL_\lambda$, i.e., $c(\lambda)^s$, coincides with the degree of the projective variety $G/P_I \hookrightarrow \mathbb{P}(V_\lambda^*)$.
    Let $H_\lambda(t)$ be the Hilbert polynomial of the section ring $R_\lambda = \bigoplus_{k\ge0} H^0(G/P_I,\cL_\lambda^{\otimes k})$.
    Then the degree of the projective variety $G/P_I \hookrightarrow \mathbb{P}(V_\lambda^*)$ coincides with the $s!$ multiple of the leading coefficient of $H_\lambda(t)$.
    By Theorem~\ref{thm:BorelWeil}, $R_\lambda = \bigoplus_{k\ge0} V_{k\lambda}$, and thus $H_\lambda(t)$ coincides with the Weyl polynomial, say $f_W$, restricted to the ray $\R_{\ge0}\cdot\lambda$, i.e., $f_W(t\lambda) = H_\lambda(t)$.
    The restriction $f_W|_{M_\R(P_I)}$ accepts the decomposition into homogeneous components
    \[
        f_W|_{M_\R(P_I)} = f_{W,I} + \text{homogeneous components of lower degree}.
    \]
    In particular,
    \[
        f_W(t\lambda) = f_W|_{M_\R(P_I)}(t\lambda) = f_{W,I}(\lambda)t^{\dim(G/P_I)} + \text{lower terms}.
    \]
    Note $f_{W,I}(\lambda)\neq 0$ as $f_{W,I}$ is of top degree (i.e., of degree $\dim(G/P_I)$).
    The statement follows.
\end{proof}

We are now ready to formulate a version of the Brion-Kasarnovskii theorem \cite{kaz87, bri89} for our setting.
Let us consider a pair of vector spaces $\Lambda_\R\subseteq M_\R(P_I)$.
By Theorem~\ref{thm:Borel}, we can identify $M_\R(P_I)$ with $H^2(G/P_I,\R)$.
Let $\Lambda_\R^\perp$ be a complementary subspace of $\Lambda_\R$ in $M_\R(P_I)$.
By Remark~\ref{rem:csurj}, we have the surjection
\[
    \bar\rho\colon\Am_{\Lambda_\R,\Sigma}\simeq \Pm_{\Lambda_\R,\Sigma} \oplus \Lambda_\R^\perp \to H^2(E_X,\R). 
\]
Hence, $H^*(E_X,\R) = \Diff(\Am_{\Lambda_\R,\Sigma})/\Ann(f)$ where $f \colon \Am_{\Lambda,\Sigma} \to H^{2(n+s)}(E_X,\R)\simeq\R$ is the self-intersection polynomial given by $f(\Delta) = \bar\rho(\Delta)^{n+s}$ (see Theorem~\ref{thm-comm-alg}).
It remains to compute $f$.
\begin{theorem}\label{thm:brikaz}
    In the notation as above, for any $\Delta\in\Am_{\Lambda_\R,\Sigma}$ we have
    \[
        \bar\rho(\Delta)^{n+s} = (n+s)!\cdot\int_\Delta  f_{W,I}(\lambda) \diff \mu,
    \]
    where $n=\rk(\Lambda)$ and $\mu$ is Lebesgue measure on $\Lambda_\R$ normalized with respect to $\Lambda$.
\end{theorem}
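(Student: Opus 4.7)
The plan is to deduce Theorem~\ref{thm:brikaz} from the degree-2 self-intersection formula (Theorem~\ref{thm:BKKdeg2}) by using Borel's theorem (Theorem~\ref{thm:Borel}) to identify $H^2(G/P_I, \R)$ with $M_\R(P_I)$, and Proposition~\ref{prop:degreeP} to replace the $s$-fold self-intersection of Chern classes on $G/P_I$ with the Weyl polynomial $f_{W,I}$.

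First I would pin down all the identifications. By Theorem~\ref{thm:Borel}, $c \colon M_\R(P_I) \xrightarrow{\sim} H^2(G/P_I, \R)$ is a linear isomorphism, so in Remark~\ref{rem:csurj} I may take the complement of $c(\Lambda_\R)$ inside $H^2(G/P_I, \R)$ to be $c(\Lambda_\R^\perp)$, where $\Lambda_\R^\perp \subseteq M_\R(P_I)$ is the complement fixed before the theorem. Under $c$, the ambient space $V' = \Lambda_\R \oplus c(\Lambda_\R^\perp)$ becomes identified with $M_\R(P_I) = \Lambda_\R \oplus \Lambda_\R^\perp$. Accordingly, $\Delta \in \Am_{\Lambda_\R, \Sigma}$ is viewed as a polytope in $M_\R(P_I)$ whose affine span is parallel to $\Lambda_\R$, and the linear map $\bar c(x_0, c(v)) = c(x_0) + c(v) = c(x_0 + v)$ corresponds to $c$ itself, applied to $M_\R(P_I)$.

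With these identifications, Theorem~\ref{thm:BKKdeg2} reads
\[
  s! \cdot \bar\rho(\Delta)^{n+s} = (n+s)! \cdot \int_\Delta \bar c(\lambda)^s \diff\mu(\lambda) = (n+s)! \cdot \int_\Delta c(\lambda)^s \diff\mu(\lambda),
\]
where $c(\lambda)^s \in H^{2s}(G/P_I, \R) \simeq \R$ is evaluated via pairing with the fundamental class of $G/P_I$ (possible since $\dim_\R(G/P_I) = 2s$). Proposition~\ref{prop:degreeP} then gives $c(\lambda)^s = s! \cdot f_{W,I}(\lambda)$ pointwise in $\lambda \in M_\R(P_I)$, and cancelling the factor of $s!$ yields the claimed identity.

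The main obstacle is bookkeeping rather than mathematical content: one must choose the complementary subspace in Remark~\ref{rem:csurj} compatibly with the decomposition $M_\R(P_I) = \Lambda_\R \oplus \Lambda_\R^\perp$, check that the Lebesgue measure on $\Lambda_\R$ normalized by $\Lambda$ is the one used in Theorem~\ref{thm:BKKdeg2}, and verify that $\bar c$ reduces to $c$ under these identifications. Once these are in place, the theorem follows by direct substitution; all substantive content is packaged into Theorem~\ref{thm:BKKdeg2} and Proposition~\ref{prop:degreeP}.
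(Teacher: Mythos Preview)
Your proposal is correct and follows exactly the same route as the paper, which simply says the theorem ``follows immediately from Theorem~\ref{thm:BKKdeg2} and Proposition~\ref{prop:degreeP}.'' You have merely spelled out the identifications (via Theorem~\ref{thm:Borel} and Remark~\ref{rem:csurj}) that the paper records in the paragraph preceding the statement.
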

\begin{proof}
    The theorem follows immediately from Theorem~\ref{thm:BKKdeg2} and Proposition~\ref{prop:degreeP}. 
\end{proof}

\subsection{Horospherical varieties over full flag variety}
\label{sec:horofull}
As before, let $G$ be a connected semisimple simply connected algebraic group, $B$ a Borel subgroup in $G$, and $T\subseteq B$ a maximal torus.
In the proof of Theorem~\ref{thm:Borel}, we already used Borel's famous description of the cohomology ring of the full flag variety $G/B$~\cite{Borel}.
For the reader's convenience we briefly recall it: Borel showed that the cohomology ring of the full flag variety $G/B$ is isomorphic to the ring of coinvariants of the Weyl group $W=N_G(T)/T$ of $G$:
\[
  H^*(G/B, \R) \simeq \Sym(M_\R) / \Sym(M_\R)_{> 0}^W \text{,}
\]
where the action of $W$ on the ring of polynomials on the Lie algebra of $T$ is given by the coadjoint action on~$M_\R$.
In particular, $H^*(G/B, \R)$ is generated in degree 2. 

Let $U\subseteq B$ be the unipotent radical of $B$ and let $U\subseteq H\subseteq B$ be any closed subgroup of $G$. Then, one has $N_G(H) =B$ and hence there is a natural $H/B$-principal bundle:
\[
  p\colon E = G/H \to G/B \text{.}
\]
The group $B/H$ is an algebraic torus (it is a factor group of $B/U\simeq (\C^*)^{\rk(G)}$), so $p\colon E \to G/B$ is a torus principal bundle. In what follows we identify $B/U$ with the maximal torus $T$ via natural projection $T\to B/U$.

As in Section~\ref{sec:base-var}, we can apply Theorem~\ref{thm:brikaz} to compute the cohomology ring $H^*(E_X,\R)$ of the toric bundle associated to a smooth projective fan $\Sigma$ in $M_\R(B/H)$.
Denote by $f_W$ the top homogeneous part of the Weyl polynomial on $M_\R(T)$.
Then Proposition~\ref{prop:degreeP} becomes:
\begin{proposition}\label{prop:degree}
    Let $G, T$ and $f_W$ be as before.
    Then for any $\lambda\in M_\R(T)$ we have
    \[
        c(\lambda)^s = s!\cdot f_W(\lambda) \qquad \text{where $s=\dim_\C(G/B)$.}
    \]
\end{proposition}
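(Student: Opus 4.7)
The plan is to recognize Proposition~\ref{prop:degree} as the special case $I = \emptyset$ of the already-established Proposition~\ref{prop:degreeP}. When $I = \emptyset$ we have $P_I = B$, so $M(P_I) = M(B)$; under the identification of $M(B)$ with $M(T)$ via the projection $B \to B/U \simeq T$ (which is used throughout this subsection), the space $M_\R(P_I)$ coincides with all of $M_\R(T)$. Consequently the restriction of the Weyl polynomial to $M_\R(P_I)$ is the Weyl polynomial itself, and its top homogeneous part $f_{W,I}$ is exactly $f_W$. Since $s = \dim_\C(G/P_I) = \dim_\C(G/B)$, the identity $c(\lambda)^s = s! \cdot f_W(\lambda)$ is precisely the content of Proposition~\ref{prop:degreeP} in this degenerate case.

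If one prefers to recall the argument in the $B$-case directly rather than invoking the more general statement, the skeleton is as follows. Both sides are polynomials in $\lambda$, so it suffices to check the equality on a Zariski-dense subset of $M_\R(T)$, for which one may take the interior lattice points of the dominant chamber $C^+$, where $\cL_\lambda$ is very ample on $G/B$ and yields an embedding $G/B \hookrightarrow \mathbb{P}(V_\lambda^*)$ via Theorem~\ref{thm:BorelWeil}. The degree of $\cL_\lambda$, which equals $c(\lambda)^s$, then coincides with $s!$ times the leading coefficient of the Hilbert polynomial $H_\lambda(t)$ of the section ring $\bigoplus_{k \ge 0} H^0(G/B, \cL_\lambda^{\otimes k}) = \bigoplus_{k \ge 0} V_{k\lambda}$. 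By the Weyl dimension formula, $H_\lambda(t) = f_W(t\lambda) + (\text{lower order terms})$ with $f_W(t\lambda) = f_W(\lambda)\, t^s$, so the leading coefficient is $f_W(\lambda)$, giving $c(\lambda)^s = s! \cdot f_W(\lambda)$.

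There is no real obstacle here: the only subtlety worth flagging is the identification $M(B) \simeq M(T)$ (so that the Weyl polynomial, which naturally lives on $M_\R(T)$, is viewed as a polynomial on $M_\R(B) = M_\R(P_\emptyset)$), but this has already been set up at the start of the subsection. The statement is thus a direct corollary of Proposition~\ref{prop:degreeP}.
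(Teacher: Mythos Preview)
Your proposal is correct and takes essentially the same approach as the paper: the paper simply introduces Proposition~\ref{prop:degree} with the phrase ``Then Proposition~\ref{prop:degreeP} becomes:'' and gives no separate proof, so your recognition of it as the case $I=\emptyset$ (with $P_I=B$ and $f_{W,I}=f_W$) is exactly what is intended. Your optional direct sketch is also just the proof of Proposition~\ref{prop:degreeP} specialized to $B$, so there is nothing to add.
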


Let $L$ be the image of $M_\R(B/H)$ in $M_\R(T)$ under the pull-back of characters and let $\mu$ be a translation invariant Lebesgue measure on $L$ normalized with respect to the lattice $M(B/H)$. 
\begin{theorem}\label{thm:G/HWeyl}
    Consider the torus bundle $E = G/H \to G/B$, for $U\subseteq H \subseteq B$.
    Let $\Sigma \subseteq M_\R(B/H)$ be a smooth projective fan with associated toric variety $X$.
    Then the cohomology ring $H^*(E_X, \R)$ is given by
    \[
        H^{*/2}(E_X,\R)\simeq \Diff(\Am_{L,\Sigma}) / \Ann(I) \text{,}
    \]
    where $I(\Delta) = \int_\Delta f_W(\lambda) \diff \mu$.
    % where $I(\Delta) = (n+s)!\int_\Delta f_W \diff \mu$.
    Furthermore, the ring of conditions $C_\R^*(G/U)$ is given by
    \[
        C^{*}_\R(G/U)\simeq \Diff(\Am_L) / \Ann(I) \text{.}
    \]
\end{theorem}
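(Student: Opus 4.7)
The plan is to specialize Theorem~\ref{thm:brikaz} to the parabolic $P_I = B$, i.e.\ $I = \emptyset$. In this case $M_\R(P_I) = M_\R(B) = M_\R(T)$, and since the Weyl polynomial is itself homogeneous of top degree $s = \dim_\C(G/B)$, its restriction's top homogeneous part $f_{W,I}$ coincides with $f_W$. With $\Lambda = M(B/H)$ pulling back to the sublattice $L \subseteq M(T)$ of rank $n = \dim_\C(B/H)$, Theorem~\ref{thm:brikaz} delivers both the self-intersection formula $\bar\rho(\Delta)^{n+s} = (n+s)! \int_\Delta f_W(\lambda)\diff\mu$ for $\Delta \in \Am_{L,\Sigma}$ and the isomorphism $H^{*/2}(E_X, \R) \simeq \Diff(\Am_{L,\Sigma})/\Ann(\bar\rho(\cdot)^{n+s})$. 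Because $\bar\rho(\Delta)^{n+s}$ and $I(\Delta) = \int_\Delta f_W(\lambda)\diff\mu$ differ by the nonzero scalar $(n+s)!$, their annihilators agree and the first assertion is proven.

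For the ring of conditions, I specialize further to $H = U$, so that $B/U \cong T$ and $L = M_\R(T)$. By Theorem~\ref{thm:DP}, $C^*_\R(G/U) = \varinjlim_E H^*(E,\R)$ with the limit taken over smooth projective horospherical embeddings; as noted in Section~\ref{sec:roch}, the toric bundles $E_X$ associated to smooth projective fans $\Sigma \subseteq M_\R(T)$ are cofinal in this directed system. Since $\Am_L = \bigcup_\Sigma \Am_{L,\Sigma}$ (any two smooth projective fans admit a common smooth projective refinement) and the polynomial functional $I$ is uniformly defined by integration of $f_W$ across all $\Am_{L,\Sigma}$, passing the finite-dimensional isomorphisms of the first part to the direct limit yields $C^*_\R(G/U) \simeq \Diff(\Am_L)/\Ann(I)$.

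The main non-routine point will be compatibility of the direct limit with the $\Diff/\Ann$ quotient description: for a refinement $\Sigma' \succ \Sigma$, one must verify that the inclusion $\Am_{L,\Sigma} \hookrightarrow \Am_{L,\Sigma'}$ induces the ring homomorphism matching the cohomological pullback along the associated morphism $E_{X_{\Sigma'}} \to E_{X_\Sigma}$. This should follow formally from the fact that both $\bar\rho$ and $I$ are defined uniformly with respect to the whole space of virtual polytopes, but writing out the bookkeeping so that the directed colimit is literally realized by $\Diff(\Am_L)/\Ann(I)$ is where the only real technical content lives.
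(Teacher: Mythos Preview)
Your approach is correct and essentially identical to the paper's: the paper cites Corollary~\ref{cohdeg2} and Proposition~\ref{prop:degree} directly, while you route through Theorem~\ref{thm:brikaz} specialized to $P_I=B$, but these are the same ingredients repackaged (Theorem~\ref{thm:brikaz} is itself an immediate consequence of Theorem~\ref{thm:BKKdeg2} and Proposition~\ref{prop:degreeP}). Two minor imprecisions worth fixing: first, the Weyl polynomial $\lambda\mapsto\dim V_\lambda$ is \emph{not} homogeneous---$f_W$ is by definition its top homogeneous part, and $f_{W,\emptyset}=f_W$ follows simply because $M_\R(P_\emptyset)=M_\R(T)$, not from any homogeneity claim; second, Theorem~\ref{thm:brikaz} as stated gives only the self-intersection formula, so for the $\Diff/\Ann$ isomorphism you should cite Theorem~\ref{thm-comm-alg} (or Corollary~\ref{cohdeg2}) separately rather than attributing it to Theorem~\ref{thm:brikaz}.
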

\begin{proof}
    The theorem follows immediately from Corollary~\ref{cohdeg2} and Proposition~\ref{prop:degree}.
\end{proof}

%%%%%%%%%%%%%%%%%%%%%%%%%%%%%%%%%%%%%%%%%%%%%%%%%%%%%%%%%%%%%%%%%%%%%

\subsection{Relation to string polytopes}
\label{sec:string-poly}
In the next two subsections we continue to examine the example of toroidal horospherical varieties over full flag varieties.
In this case, Theorems~\ref{thm:brikaz} and~\ref{thm:G/HWeyl} have beautiful reformulations.
For simplicity of the exposition we will start with the case $G=\SL_n$ and $H=U\subseteq \SL_n$.

For $\lambda$ a dominant weight, let $\GZ_\lambda \subseteq \R^{N=n(n-1)/2}$ be the corresponding \emph{Gelfand-Zetlin polytope} \cite{GZ}.
From the defining inequalities, it can be straightforwardly deduced that Gelfand-Zetlin polytopes are additive:
\[
  \GZ_{\lambda+\mu}=\GZ_\lambda+\GZ_\mu \qquad \text{for any dominant weights $\lambda, \mu$}\text{.}
\]
By linearity, we extend the definition of Gelfand-Zetlin polytopes to all of the positive Weyl chamber $C^+$, i.e. we define $\GZ_\lambda$ for any $\lambda \in C^+$:
if $\lambda$ is in the positive Weyl chamber, we can write $\lambda = \sum_{i=1}^r a_i \cdot \mu_i$ for positive real numbers $a_i$ and dominant (integral) weights $\mu_i$; set $\GZ_\lambda = \sum_{i=1}^r a_i \cdot \GZ_{\mu_i}$ (Minkowski addition).
By the additivity of Gelfand-Zetlin polytopes, it follows that this definition is well-defined.

The Gelfand-Zetlin polytopes form a linear family of polytopes, a concept which we recall from \cite[Definition 1.2]{kavvil}.
Let $V, W$ be two real vector spaces (not necessarily finite-dimensional) and let $C \subseteq V$ be a full-dimensional convex cone.
Note that the cone $C$ isn't necessarily closed.
We only assume that $c_1 x_1 + c_2 x_2 \in C$ for any $x_1, x_2 \in C$ and all non-negative real numbers $c_1, c_2 \ge 0$.
An $\R$-linear map $\Delta \colon C \to \Pm_W^+$ is called a \emph{linear family of polytopes} where ``\emph{linearity}'' is explained in the following folklore result:
\begin{proposition}
  \label{prop:linearity}
  A map $\Delta \colon C \to \Pm_W^+$ is called linear if either of the following equivalent conditions is satisfied:
  \begin{enumerate}
  \item $\Delta(c_1 x_1 + c_2 x_2) = c_1 \Delta(x_1) + c_2 \Delta(x_2)$ for any $x_1,x_2 \in C$ and all non-negative real numbers $c_1,c_2 \ge 0$.
  \item There is a (unique) linear map $V \to \Pm_W$ such that its restriction to $C$ coincides with $\Delta(\cdot)$.
  \end{enumerate}
\end{proposition}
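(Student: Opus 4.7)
The implication $(2) \Rightarrow (1)$ is immediate: a linear map $V \to \Pm_W$ satisfies $\Delta(c_1 x_1 + c_2 x_2) = c_1 \Delta(x_1) + c_2 \Delta(x_2)$ for arbitrary scalars, so in particular for non-negative ones. The interesting direction is $(1) \Rightarrow (2)$, and my plan is to construct the extension $\tilde{\Delta} \colon V \to \Pm_W$ by hand using the fact that $C$ is full-dimensional and the ambient target $\Pm_W$ is a genuine vector space (hence allows formal differences of polytopes).

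Since $C \subseteq V$ is full-dimensional and closed under non-negative linear combinations, it spans $V$, and in fact every $v \in V$ admits a representation $v = x - y$ with $x, y \in C$. (Pick any basis of $V$ contained in $C$; any vector is a linear combination of basis elements, and we group the positive and negative parts.) This motivates defining
\[
  \tilde{\Delta}(v) \coloneqq \Delta(x) - \Delta(y) \in \Pm_W \qquad \text{for $v = x - y$ with $x, y \in C$.}
\]
Well-definedness is the main thing to check: if $v = x - y = x' - y'$ are two such representations, then $x + y' = x' + y$, both elements of $C$, and by (1) applied with $c_1 = c_2 = 1$ we get $\Delta(x) + \Delta(y') = \Delta(x') + \Delta(y)$ in $\Pm_W^+$. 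Rearranging (now legitimately, inside the ambient vector space $\Pm_W$) yields $\Delta(x) - \Delta(y) = \Delta(x') - \Delta(y')$.

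It remains to verify that $\tilde{\Delta}$ is $\R$-linear. Additivity follows from additivity of $\Delta$ on $C$ applied to the sum of the positive and negative parts of two representations $v = x - y$ and $v' = x' - y'$, since $v + v' = (x + x') - (y + y')$ and $x + x', y + y' \in C$. For compatibility with scalar multiplication, fix $v = x - y \in V$ and $c \in \R$. If $c \ge 0$, then $c v = (c x) - (c y)$ with $c x, c y \in C$, and (1) gives $\Delta(c x) = c \Delta(x)$, $\Delta(c y) = c \Delta(y)$, so $\tilde{\Delta}(c v) = c \tilde{\Delta}(v)$. If $c < 0$, write $c v = (-c) y - (-c) x$ with $-c > 0$ and apply the previous case. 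Uniqueness of the extension is clear, since any linear map on $V$ agreeing with $\Delta$ on $C$ is determined on $C - C = V$ by the defining formula. I do not anticipate a genuine obstacle here: the only subtle point is the well-definedness step, and this is purely formal once one remembers that $\Pm_W$ is literally the Grothendieck group of $(\Pm_W^+, +)$ and cancellation of Minkowski sums holds in $\Pm_W^+$.
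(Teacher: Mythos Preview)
Your proof is correct and follows a different (though equally standard) route from the paper. The paper chooses a Hamel basis $\{v_i\}$ of $V$ lying inside $C$, defines the extension $F$ on that basis by $F(v_i)\coloneqq\Delta(v_i)$, and then checks directly that $F$ agrees with $\Delta$ on all of $C$: for $x=\sum_j\lambda_j v_j\in C$ one rewrites $x+\sum_{\lambda_j<0}(-\lambda_j)v_j=\sum_{\lambda_j>0}\lambda_j v_j$ (both sides in $C$) and applies condition~(1). Your argument instead exploits $V=C-C$ to set $\tilde\Delta(x-y)\coloneqq\Delta(x)-\Delta(y)$ and verifies well-definedness via cancellation in $\Pm_W$; this is precisely the Grothendieck-group extension and is arguably the more conceptual version. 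Both approaches handle the infinite-dimensional case without difficulty. One small omission in your write-up: you should state that $\tilde\Delta$ actually restricts to $\Delta$ on $C$ (for $v\in C$ write $v=(v+w)-w$ with $w\in C$ and use additivity); you use this implicitly in the uniqueness claim but never verify it.
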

For the reader's convenience, we include a proof for Proposition~\ref{prop:linearity}.
\begin{proof}
  The implication ``(2) $\Rightarrow$ (1)'' is straightforward.
  It remains to show the reverse implication.
  
  Choose a (Hamel) basis of $V$, say $\{ v_i \}_{i \in I}$.
  Since $C$ is full-dimensional, we may (and will) assume that $\{ v_i \}_{i \in I} \subseteq C$.
  We define a linear map $F \colon V \to \Pm_W$ by setting $F(v_i) = \Delta(v_i)$ for $i \in I$ and show that $F|_C = \Delta$.
  Let $x \in C$ expressed as $x = \sum_{j \in J} \lambda_j x_j$ for a finite subset $J \subseteq I$ and $\lambda_j \in \R$.
  Consider the partition of $J$:
  \[
    J_- \coloneqq \{ j \in J \colon \lambda_j < 0\} \qquad \text{and} \qquad J_+ \coloneqq \{ j \in J \colon \lambda_j > 0\} \text{.}
  \]
  Then $x + \sum_{j \in J_-} (-\lambda_j) v_j = \sum_{j \in J_+} \lambda_j v_j$, and thus
  \[
    \Delta\rleft(x + \sum_{j \in J_-} (-\lambda_j) v_j\rright) = \Delta\rleft(\sum_{j \in J_+} \lambda_j v_j\rright) \Rightarrow \Delta(x) + \sum_{j \in J_-} (-\lambda_j) \Delta(v_j) = \sum_{j \in J_+} \lambda_j \Delta(\lambda_j) \Rightarrow \Delta(x) = \sum_{j \in J} \lambda_j \Delta(v_j) \text{,}
  \]
  where the second implication follows by our assumption that (1) holds.
  The statement follows by the observation that $F(x) = \sum_{j \in J} \lambda_j \Delta(v_j)$.
\end{proof}

\begin{example}
  It is straightforward to show that Gelfand-Zetlin polytopes $GZ \colon (C^+)^\circ \to \Pm_{\R^N}^+$ form a linear family of polytopes.
  Here, $(C^+)^\circ$ denotes the interior of the positive Weyl chamber.
\end{example}

For a linear family of polytopes $\Delta\colon C \to \Pm_W^+$, we construct its \emph{lift} $\widetilde \Delta\colon \mathscr{C} \to \Pm_{V\oplus W}^+$ where $\mathscr{C}$ denotes the following convex cone ($C^\circ$ denotes the interior of $C$)
\[
  \mathscr{C} \coloneqq \{ Q \in \Pm_V^+ \colon Q \subseteq C^\circ \} \text{.}
\]

\begin{proposition}
  $\mathscr{C}$ is a full-dimensional convex cone.
\end{proposition}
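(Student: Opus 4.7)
The plan is to verify three properties separately: closure under positive scalar multiplication, closure under Minkowski addition (together giving the convex-cone structure), and full-dimensionality (meaning $\mathscr{C}$ spans $\Pm_V$ as an $\R$-vector space). For the topological setup, in the applications of interest the vector space $V$ is finite-dimensional (e.g.\ $M_\R(T)$ in the Gelfand--Zetlin case), so $C^\circ$ denotes the interior in the usual Euclidean sense; in the infinite-dimensional case one reads $C^\circ$ as the algebraic interior and applies the same arguments inside each finite-dimensional subspace containing the polytopes under consideration.

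Step one is to observe that the interior $C^\circ$ of the convex cone $C$ is itself a convex cone: for $\lambda>0$, multiplication by $\lambda$ is a homeomorphism of $V$ sending $C$ to $C$, so $\lambda C^\circ = C^\circ$; and for $x_1,x_2\in C^\circ$, openness of $C^\circ$ together with convexity of $C$ implies $x_1+x_2\in C^\circ$ (a neighbourhood of $x_1$ translates to a neighbourhood of $x_1+x_2$ inside $C$). From this, closure of $\mathscr{C}$ under positive scaling is immediate: $Q\subseteq C^\circ$ and $\lambda>0$ gives $\lambda Q\subseteq \lambda C^\circ = C^\circ$, hence $\lambda Q\in \mathscr{C}$. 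Closure under Minkowski addition is equally direct: if $Q_1,Q_2\subseteq C^\circ$, then $Q_1+Q_2\subseteq C^\circ+C^\circ\subseteq C^\circ$, so $Q_1+Q_2\in\mathscr{C}$.

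Step two is full-dimensionality, which amounts to showing that every polytope $P\in\Pm_V^+$ can be written as a difference of two elements of $\mathscr{C}$. Since $C$ is full-dimensional, $C^\circ$ is non-empty; pick any $c\in C^\circ$. Because $P$ is compact and $C^\circ$ is open and stable under positive scaling, there exists $\lambda>0$ sufficiently large so that $P+\lambda c\subseteq C^\circ$; here $P+\lambda c$ denotes the Minkowski sum $P+\{\lambda c\}$, which is again a polytope. The singleton $\{\lambda c\}$ trivially lies in $\mathscr{C}$, and therefore
\[
  P = (P+\{\lambda c\}) - \{\lambda c\}
\]
exhibits $P$ as a difference of two elements of $\mathscr{C}$. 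Since $\Pm_V$ is, by construction, the $\R$-linear span of $\Pm_V^+$ (virtual polytopes are differences of convex polytopes), this shows that $\mathscr{C}$ generates $\Pm_V$ as a real vector space.

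The argument is essentially routine; the only point demanding a little care is the topological interpretation of $C^\circ$ when $V$ is infinite-dimensional, but no such subtlety occurs in the Gelfand--Zetlin and string-polytope applications that motivate the definition.
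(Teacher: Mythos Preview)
Your proof is correct and follows essentially the same approach as the paper. Both arguments verify the convex-cone property directly (the paper merely says ``a straightforward calculation shows''), and for full-dimensionality both translate polytopes into $C^\circ$: the paper takes an arbitrary virtual polytope $Q = Q_1 - Q_2$ and finds a single vector $v$ with $Q_1+v, Q_2+v \subseteq C^\circ$, whereas you write a single $P \in \Pm_V^+$ as $(P+\{\lambda c\}) - \{\lambda c\}$ and then invoke that $\Pm_V^+$ spans $\Pm_V$---a cosmetic variation of the same translation trick.
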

\begin{proof}
  A straightforward calculation shows that $\mathscr{C}$ is a convex cone.
  To show that $\mathscr{C} \subseteq \Pm_V$ is full-dimensional, let $Q = Q_1 - Q_2 \in \Pm_V$ where $Q_1, Q_2 \in \Pm_V^+$.
  Since $C \subseteq V$ is full-dimensional, there is a translation vector $v \in V$ such that $Q_1 + v, Q_2 + v \subseteq C$.
  The statement follows from the observation that $Q = (Q_1 + v) - (Q_2 + v)$.
\end{proof}

We define the lift $\widetilde \Delta \colon \mathscr{C} \to \Pm_{V \oplus W}^+$ of the linear family of polytopes $\Delta \colon C \to \Pm_W^+$ by setting for any $Q \in \mathscr{C}$:
\[
  \widetilde\Delta(Q) \coloneqq \{ (x,y) \in V \oplus W \colon x \in Q, y \in \Delta(x) \} \text{.}
\]
\begin{proposition}
  Suppose $V$ is finite-dimensional.
  Then for any $Q \in \mathscr{C}$, $\widetilde \Delta (Q)$ is a polytope, i.e. $\widetilde \Delta(Q) \in \Pm_{V\oplus W}^+$.
  Furthermore, $\widetilde \Delta \colon \mathscr{C} \to \Pm_{V \oplus W}^+$ is linear in the sense of Proposition~\ref{prop:linearity}.
\end{proposition}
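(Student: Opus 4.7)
The plan is to reduce both claims to a concrete vertex description of $\widetilde{\Delta}(Q)$ followed by direct computations with Minkowski sums, relying throughout on the linearity of $\Delta$ guaranteed by Proposition~\ref{prop:linearity}.

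Since $V$ is finite dimensional and $Q \in \mathscr{C}$ is a polytope lying in the open cone $C^\circ$, its vertices $v_1,\ldots,v_k$ lie in $C^\circ$, so each $\Delta(v_i) \in \Pm_W^+$ is a genuine polytope. First I would establish the identity
\[
  \widetilde{\Delta}(Q) \;=\; \mathrm{conv}\Bigl( \bigcup_{i=1}^k \{v_i\} \times \Delta(v_i) \Bigr).
\]
For the forward inclusion: any $x \in Q$ can be written as $x = \sum_i \lambda_i v_i$ with $\lambda_i \geq 0$ and $\sum_i \lambda_i = 1$, whence $\Delta(x) = \sum_i \lambda_i \Delta(v_i)$ by linearity, so any $y \in \Delta(x)$ decomposes as $y = \sum_i \lambda_i y_i$ with $y_i \in \Delta(v_i)$, giving $(x,y) = \sum_i \lambda_i (v_i, y_i)$. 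The reverse inclusion follows from the same formula read in the opposite direction. Each $\{v_i\} \times \Delta(v_i)$ lies in a finite dimensional affine subspace of $V \oplus W$ and is a polytope there; since the convex hull of finitely many polytopes in a common finite dimensional subspace is again a polytope, $\widetilde{\Delta}(Q) \in \Pm_{V \oplus W}^+$.

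To prove linearity I would verify condition~(1) of Proposition~\ref{prop:linearity} directly. For $Q_1, Q_2 \in \mathscr{C}$ and $c_1, c_2 \geq 0$, the sum $c_1 Q_1 + c_2 Q_2$ belongs to $\mathscr{C}$ because $C^\circ$ is itself a convex cone. The identity
\[
  \widetilde{\Delta}(c_1 Q_1 + c_2 Q_2) \;=\; c_1 \widetilde{\Delta}(Q_1) + c_2 \widetilde{\Delta}(Q_2)
\]
then follows pointwise: $(x,y)$ lies in the left hand side iff $x = c_1 x_1 + c_2 x_2$ for some $x_i \in Q_i$ and $y \in \Delta(x)$; by linearity, $\Delta(x) = c_1 \Delta(x_1) + c_2 \Delta(x_2)$ (this equality is independent of the chosen decomposition of $x$, because $\Delta$ extends to a well-defined linear map $V \to \Pm_W$), so $y = c_1 y_1 + c_2 y_2$ with $y_i \in \Delta(x_i)$, which is exactly the condition that $(x,y) = c_1 (x_1, y_1) + c_2 (x_2, y_2)$ with $(x_i, y_i) \in \widetilde{\Delta}(Q_i)$.

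I do not expect a substantive obstacle: both claims amount to checking the behaviour of $\widetilde{\Delta}$ under convex combinations and Minkowski sums, which is governed directly by the linearity of $\Delta$. The one point that requires care is invoking the linear extension of $\Delta$ to all of $V$ provided by Proposition~\ref{prop:linearity}, so that identities of the form $\Delta(c_1 x_1 + c_2 x_2) = c_1 \Delta(x_1) + c_2 \Delta(x_2)$ with $x_i \in C$ and $c_i \geq 0$ can be used freely without worrying about the ambiguity in the decomposition of the point $c_1 x_1 + c_2 x_2$.
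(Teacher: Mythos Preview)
Your proof is correct and takes a genuinely different route from the paper's. The paper works with the $H$-representation: it invokes \cite[Proposition~1.3]{kavvil} to argue that all $\Delta(x)$ for $x \in C^\circ$ share a common normal fan, deduces that the support values $\beta_j(x)$ depend linearly on $x$, and then writes down an explicit system of inequalities cutting out $\widetilde{\Delta}(Q)$. You instead work with the $V$-representation, establishing the identity $\widetilde{\Delta}(Q) = \mathrm{conv}\bigl(\bigcup_i \{v_i\} \times \Delta(v_i)\bigr)$ directly from the linearity of $\Delta$ and reading off polytopality from the finiteness of the vertex set. Your approach is more self-contained (no external reference needed for the common-normal-fan fact) and arguably more transparent; the paper's approach has the advantage of producing an explicit facet description of $\widetilde{\Delta}(Q)$, which could be useful downstream but is not needed here. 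Your linearity argument is also more explicit than the paper's, which simply asserts that ``a straightforward calculation'' yields the result. The caveat you flag about the decomposition $x = c_1 x_1 + c_2 x_2$ being non-unique is handled correctly by appealing to the linear extension of $\Delta$ to all of $V$, though in practice each inclusion only requires one decomposition at a time, so the ambiguity never actually bites.
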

\begin{proof}
  Let $Q \in \mathscr{C}$.
  Consider the $H$-representation of $Q$, i.e.
  \[
    Q = \bigcap_{i=1}^s \{ x \in V \colon \langle x, n_i \rangle \le b_i \} \qquad \text{where $n_i \in V^*$ are the facet normals and $b_i \in \R$.}
  \]
  To determine the $H$-representation of $\widetilde \Delta(Q)$, we also need the $H$-representation of $\Delta(x)$ for $x \in C^\circ$.
  By \cite[Proposition 1.3]{kavvil}, the polytopes $\Delta(x)$ for $x \in C^\circ$ share the same normal fan (indeed, Kaveh and Villella assume $C$ to be polyhedral, but their argument can be easily generalized for arbitrary convex cones).
  In particular,
  \[
    \Delta(x) = \bigcap_{j=1}^\sigma \{ y \in W \colon \langle y, \nu_j \rangle \le \beta_j(x)\} \qquad \text{where $\nu_j \in W^*$ are the facet normals and $\beta_j(x) \in \R$.}
  \]
  Since for $x, x' \in C^\circ$, $\Delta(x)$, $\Delta(x')$ and $\Delta(x) + \Delta(x')$ share the same normal fan (see for instance \cite[Proposition 7.12]{Ziegler}), it straightforwardly follows from the linearity of $\Delta$ that the $\beta_j\colon C^\circ \to \R$ are linear in the sense of Proposition~\ref{prop:linearity}.
  In particular, $\beta_j$ can be uniquely extended to a linear function on $V$, i.e. $\beta_j \in V^*$.
  
  From the above it follows that $\widetilde\Delta(Q) \subseteq V \oplus W$ is a polytope as $\widetilde\Delta(Q)$ is bounded with $H$-representation:
  \[
    \widetilde\Delta(Q) = \{ (x,y) \in V\oplus W \colon \langle x, n_i \rangle \le b_i \; \text{for $i=1, \ldots, s$ and} \; \langle y, \nu_j \rangle \le \langle x, \beta_j \rangle \; \text{for $j=1, \ldots, \sigma$}\} \text{.}
  \]
  Hence, $\widetilde\Delta(Q) \in \Pm_{V\oplus W}^+$.
  By using the linearity of $\Delta$ (in the sense of Proposition~\ref{prop:linearity}) a straightforward calculation shows that $\widetilde\Delta \colon \mathscr{C} \to \Pm_{V\oplus W}$ is linear too.
\end{proof}

We denote the composition of the (linear) lift $\widetilde \Delta \colon \mathscr{C} \to \Pm_{V \oplus W}^+$ with the volume polynomial on $\Pm_{V \oplus W}$ by $\widetilde \Vol$.
The polynomial $\widetilde \Vol$ has degree $\dim(V) + \dim(W)$.

Let us apply this construction of linear families of virtual polytopes to the linear family of Gelfand-Zetlin polytopes $GZ \colon C^+ \to \Pm_{\R^N}^+$.
Let $B, T \subseteq \SL_n$ and $M = M(T)$ be as before.
The polynomial $\widetilde \Vol \colon \Pm_{M_\R} \to \R$ has degree $\dim(M_\R) + N = \dim_\C(E)$, and thus its degree coincides with the degree of the polynomial $I(\Delta)$.
Indeed, these two polynomials coincide up to a constant scalar multiple.
We get the following result.

\begin{theorem}\label{thm:brikazGL/U}
    Consider the torus bundle $E = \SL_n/U \to \SL_n/B$.
    Let $\Sigma\subseteq M_\R$ be a smooth projective fan and let $\Delta\in \Pm_{M_\R,\Sigma}$.
    Then we have 
    \[
        \rho(\Delta)^{n+N-1} = (n+N-1)!\cdot \widetilde\Vol(\Delta),
    \]
    where $N=\dim(\SL_n/B)$.
\end{theorem}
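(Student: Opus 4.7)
The strategy is to reduce directly to Theorem~\ref{thm:brikaz} in the case $G=\SL_n$, $P_I=B$, $H=U$, and then identify the resulting integral with $\widetilde\Vol(\Delta)$ using the classical link between Gelfand--Zetlin polytopes and the Weyl dimension polynomial. In this setting one has $\Lambda=M(T)=M(B)$, so inside $M_\R(P_I)$ the complementary subspace $\Lambda_\R^\perp$ is zero; consequently $\Am_{\Lambda_\R,\Sigma}=\Pm_{M_\R,\Sigma}$ and $\bar\rho=\rho$. With $\rk\Lambda=n-1$ and $s=\dim_\C(\SL_n/B)=N$, Theorem~\ref{thm:brikaz} becomes
\[
  \rho(\Delta)^{n+N-1}=(n+N-1)!\cdot\int_\Delta f_W(\lambda)\diff\mu,
\]
where $f_W$ is the top homogeneous part of the Weyl polynomial on $M_\R(T)$ and $\mu$ is the lattice-normalized Lebesgue measure.

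The core analytic step is the classical identity $\Vol(\GZ_\lambda)=f_W(\lambda)$ for every dominant $\lambda\in C^+$. The plan is to deduce this from the defining combinatorial property $|\GZ_{k\lambda}\cap\Z^N|=\dim V_{k\lambda}$ for $k\in\Z_{\ge 0}$ together with the homogeneity $k\cdot\GZ_\lambda=\GZ_{k\lambda}$. The Weyl dimension formula shows that $\dim V_{k\lambda}$ is polynomial in $k$ of degree $N$ with leading coefficient $f_W(\lambda)$; comparing with the Ehrhart asymptotics $|\GZ_{k\lambda}\cap\Z^N|\sim k^N\cdot\Vol(\GZ_\lambda)$ then forces the identity. (Equivalently, the identity is a special case of Brion--Kazarnovskii for $G/B$ combined with Proposition~\ref{prop:degree}.)

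With this identity in hand, for any honest polytope $\Delta\in\mathscr{C}$ contained in the interior of $C^+$, Fubini's theorem yields
\[
  \widetilde\Vol(\Delta)=\Vol(\widetilde{\GZ}(\Delta))=\int_\Delta\Vol(\GZ_\lambda)\diff\mu(\lambda)=\int_\Delta f_W(\lambda)\diff\mu.
\]
By Theorem~\ref{ultimatepoly}, both $\widetilde\Vol(\Delta)$ and $\int_\Delta f_W\diff\mu$ extend to polynomial functions of $\Delta$ on $\Pm_{M_\R,\Sigma}$ of common degree $n+N-1$, and they agree on the full-dimensional open subcone $\mathscr{C}\cap\Pm_{M_\R,\Sigma}$; hence they agree everywhere, and the theorem follows. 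The main obstacle is essentially bookkeeping: one must verify that the lattice normalization of $\mu$ on $M_\R(T)$ is compatible with the standard Lebesgue measure on $\R^N$ used to define $\widetilde\Vol$, so that no spurious scalar slips into the equality $\Vol(\GZ_\lambda)=f_W(\lambda)$.
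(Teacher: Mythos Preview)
Your proposal is correct and follows essentially the same approach as the paper: reduce to Theorem~\ref{thm:brikaz} with $G=\SL_n$, $P_I=B$, $H=U$ (so that $\Am_{\Lambda_\R,\Sigma}=\Pm_{M_\R,\Sigma}$ and $\bar\rho=\rho$), establish $\Vol(\GZ_\lambda)=f_W(\lambda)$ from the lattice-point count and homogeneity, and identify $\int_\Delta f_W\diff\mu$ with $\widetilde\Vol(\Delta)$ via Fubini. Your version is slightly more explicit than the paper's in spelling out the Ehrhart comparison and the polynomial-extension argument from $\mathscr{C}$ to all of $\Pm_{M_\R,\Sigma}$, but the substance is the same.
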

\begin{proof}
    Since Gelfand-Zetlin polytopes are linear in $\lambda$ and $|\GZ_\lambda\cap M|=\dim(V_\lambda)$, we have $\Vol(\GZ_\lambda) = f_W(\lambda)$. Hence by Proposition~\ref{prop:degree}
    \[
        c(\lambda)^N = N! \cdot \Vol(\GZ_\lambda).
    \]
    and hence $\int_\Delta c(\lambda)^N \diff \mu = N!\cdot\widetilde \Vol(\Delta)$.
    The statement follows by Theorem~\ref{thm:brikaz}.
\end{proof}

As an immediate corollary of Theorem~\ref{thm:brikazGL/U} we obtain the following result. 

\begin{theorem}
  \label{thm:GL/U}
  Consider the torus bundle $E = \SL_n/U \to \SL_n/B$.
  Let $\Sigma\subseteq M_\R$ be a smooth projective fan with corresponding toric variety $X$.
  Then the cohomology ring $H^*(E_X,\R)$ is given by
  \[
    H^{*/2}(E_X, \R)\simeq \Diff(\Pm_{M_\R,\Sigma}) / \Ann(\widetilde \Vol) \text{.}
  \]
    The ring of conditions $C^*_\R(\SL_n/U)$ is given by
  \[
    C^{*}_\R(\SL_n/U)\simeq \Diff(\Pm_{M_\R}) / \Ann(\widetilde \Vol) \text{.}
  \]
\end{theorem}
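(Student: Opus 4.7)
The plan is to derive Theorem~\ref{thm:GL/U} by verifying the hypotheses of Theorem~\ref{thm-comm-alg} for the toric bundle $E_X$, combining this with the self-intersection formula already established in Theorem~\ref{thm:brikazGL/U}, and then passing to a direct limit to obtain the ring of conditions.

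For the description of $H^{*/2}(E_X,\R)$ at fixed $\Sigma$: since $H=U$, the fiber $B/U$ equals the maximal torus $T$, whose character lattice is $M$. By Borel's theorem (Theorem~\ref{thm:Borel}), $c\colon M_\R\to H^2(\SL_n/B,\R)$ is an isomorphism, and Proposition~\ref{cherneq} gives $p^*(c(\lambda))=\rho(\lambda)$ for $\lambda\in M_\R$ viewed as a linear virtual polytope, so $p^*(H^2(\SL_n/B,\R))\subseteq \rho(\Pm_{M_\R,\Sigma})$. Combined with Leray-Hirsch (Theorem~\ref{thm:Leray-Hirsch}), this shows $\rho\colon\Pm_{M_\R,\Sigma}\to H^2(E_X,\R)$ is surjective. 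Moreover, $H^*(E_X,\R)$ is generated in degree $2$ since by Leray-Hirsch it is multiplicatively generated by $H^*(\SL_n/B,\R)$ and $H^*(X,\R)$, both of which are generated in degree~$2$. Theorem~\ref{thm-comm-alg} then yields $H^{*/2}(E_X,\R)\simeq \Diff(\Pm_{M_\R,\Sigma})/\Ann(f)$, where $f(\Delta)=\rho(\Delta)^{n+N-1}$ is the self-intersection polynomial. By Theorem~\ref{thm:brikazGL/U}, $f=(n+N-1)!\cdot\widetilde{\Vol}$, and rescaling by a nonzero constant preserves the annihilator; this proves the first claim.

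For the ring of conditions, Theorem~\ref{thm:DP} expresses $C^*_\R(\SL_n/U)$ as $\varinjlim H^*(E_{X_\Sigma},\R)$, the direct limit being taken over smooth projective fans $\Sigma$ in $M_\R$ (which form a cofinal family of smooth horospherical embeddings of $\SL_n/U$, as noted after Theorem~\ref{thm:DP}). As $\Sigma$ refines, the finite-dimensional spaces $\Pm_{M_\R,\Sigma}$ form a directed system whose union is $\Pm_{M_\R}$, and $\widetilde{\Vol}$ (viewed as a polynomial on $\Pm_{M_\R}$ in the sense of Definition~\ref{def:poly}) restricts on each $\Pm_{M_\R,\Sigma}$ to the volume polynomial used above. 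The main subtlety is to check that forming $\Diff/\Ann$ commutes with this direct limit, i.e.\ that a differential operator on $\Pm_{M_\R}$ annihilates $\widetilde{\Vol}$ if and only if its restriction to each finite-dimensional $\Pm_{M_\R,\Sigma}$ annihilates $\widetilde{\Vol}|_{\Pm_{M_\R,\Sigma}}$; this follows from the fact that a polynomial on $\Pm_{M_\R}$ is determined by its restrictions to finite-dimensional subspaces and that constant-coefficient differential operators act locally. Passing to the limit therefore gives $C^*_\R(\SL_n/U)\simeq \Diff(\Pm_{M_\R})/\Ann(\widetilde{\Vol})$, completing the proof.
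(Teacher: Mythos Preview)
Your proof is correct and follows essentially the same approach as the paper. The paper's proof is a one-liner invoking Theorem~\ref{thm:brikazGL/U} together with Remark~\ref{rem:csurj} (the special case of Corollary~\ref{cohdeg2} when $c$ is surjective, justified via Borel's theorem); you have simply unpacked the content of that remark by directly verifying surjectivity of $\rho$ from Proposition~\ref{cherneq} and Leray--Hirsch and then applying Theorem~\ref{thm-comm-alg}. For the ring of conditions, your explicit direct-limit argument via Theorem~\ref{thm:DP} spells out a step the paper leaves implicit.
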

\begin{proof}
    The statements follow from Theorem~\ref{thm:brikazGL/U} and Remark~\ref{rem:csurj} as the map $c\colon M_\R(T) \to H^2(\SL_n/B,\R)$ is surjective by Borel's theorem.
\end{proof}

%%%%%%%%%%%%%%%%%%%%%%%%%%%%%%%%%%%%%%%%%%%%%%%%%%%%%%%%%%%%%%%%%

\subsection{\texorpdfstring{General toric bundles over a full flag variety $G/B$}{General toric bundles over a full flag variety G/B}}
\label{sec:tb-over-flag}
In general, suppose $G$ is an (arbitrary) connected semisimple simply connected group and let $U\subseteq B\subseteq G$ be as above.
In this section we will give a general construction for toric bundles $G/H\to G/B$, with $U\subseteq H\subseteq B$.
We use the same letter $N$ to denote the (complex) dimension of the full flag variety $G/B$.
As above $M=M(T)$ denotes the character lattice of $T$.

For any character $\lambda$ in the positive Weyl chamber, one defines a \emph{string polytope} $S_\lambda \subseteq \R^N$ such that
\[
    |S_\lambda\cap M(T)| = \dim(V_\lambda) \text{.}
\]
The construction of string polytopes depends on the choice of a reduced expression $\underline w_0$ of the longest element $w_0$ in the Weyl group, so fix some reduced expression $\underline w_0$.
There exists a convex cone $\Cm_{\underline w_0}\subseteq M_\R\oplus \R^N$ called \emph{weighted string cone} such that 
\[
  S_\lambda = \pi^{-1}(\lambda)\cap \Cm_{\underline w_0} \text{,}
\]
where $\pi \colon M_\R\oplus \R^N\to M_\R$ is the natural projection (see \cite{littelmanncones, BerZel}).
Like Gelfand-Zetlin polytopes, string polytopes are homogeneous in $\lambda$, i.e. $S_{k\lambda}=kS_\lambda$.
However, string polytopes $S_\lambda$ are not necessarily additive, i.e. $S_{\lambda+\mu}=S_\lambda + S_\mu$ might not be true.
As in the case of $\SL_n$ we get $c(\lambda)^N = N!\cdot \Vol(S_\lambda)$ (see also \cite[Remark 3.9 (iii)]{alexeevbrion} or \cite[Theorem 2.5]{KavehVolume}).

By \cite[Proposition 1.4]{kavvil}, it follows that the map which sends weights to the corresponding string polytopes is piecewise linear with respect to a fan $\Fm$ which subdivides the positive Weyl chamber.
That is the restriction of the map $S \colon C^+ \to \Pm_{\R^N}; \lambda \mapsto S_\lambda$ to every cone of $\Fm$ is linear.
In particular, if $\sigma$ is a full dimensional cone of $\Fm$, then $S \colon \sigma \to \Pm_{\R^N}^+$ is a linear family of polytopes.
By the construction from above, we obtain its corresponding lift $\Pm_{M_\R} \to \Pm_{M_\R \oplus \R^N}$ which gives rise to a polynomial $\widetilde \Vol$ of degree $\dim(M_\R) + N = \dim(G/U)$.

Now, let $W$ be the image of $M_\R(B/H)$ in $M_\R(T)$ and let $\mu$ be the translation invariant measure on $W$ normalized with respect to the lattice $M(B/H)$.
Since the space of affine virtual polytopes $\Am_{W}$ is a natural subspace of $\Pm_{M_\R(T)}$, their lifts are well-defined. 
In particular, for a virtual polytope $\Delta\in \Am_{W}$, its lift $\widetilde\Delta$ is a virtual polytope in a $(\dim W + N)$-dimensional space.
Let us denote by $\widetilde \Vol$ its $(\dim W + N)$-dimensional volume induced by $\mu$.
The same arguments as above yield generalizations of Theorems~\ref{thm:brikazGL/U} and~\ref{thm:GL/U}:

\begin{theorem}\label{thm:G/U}
    Consider the torus bundle $E = G/H \to G/B$.
    Let $\Sigma\subseteq M_\R(B/H)$ be a smooth projective fan with associated toric variety $X$.
    Let $W\subseteq M_\R(T)$ be as before and let $\Delta\in \Am_{W,\Sigma}$ be an affine virtual polytope.
    Then we have
    \[
    	\bar\rho(\Delta)^{r+N}=(r+N)!\cdot \widetilde\Vol(\Delta),
    \]
    Where $r=\dim(M_\R(B/H))=\dim(W)$. Moreover, the cohomology ring $H^*(E_X, \R)$ is given by
    \[
        H^{*/2}(E_X,\R)\simeq \Diff(\Am_{W,\Sigma}) / \Ann(\widetilde \Vol) \text{.}
    \]
    and the ring of conditions $C^*(G/H)$ is given by
    \[
        C^{*}_\R(G/H)\simeq \Diff(\Am_{W}) / \Ann(\widetilde \Vol) \text{.}
    \]
\end{theorem}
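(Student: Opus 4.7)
The plan is to specialise Theorem~\ref{thm:brikaz} to the parabolic $P_I = B$ (i.e.\ $I = \emptyset$) with sublattice $\Lambda = M(B/H) \subseteq M(T)$. In this case the cone $\Lambda_\R$ is precisely our subspace $W$, and $f_{W,I}$ becomes the top homogeneous part $f_W$ of the Weyl polynomial on $M_\R(T)$. Theorem~\ref{thm:brikaz} therefore gives, for every $\Delta \in \Am_{W,\Sigma}$,
\[
\bar\rho(\Delta)^{r+N} = (r+N)! \cdot \int_\Delta f_W(\lambda)\, d\mu,
\]
where the right-hand side is a polynomial of degree $r+N$ on $\Am_{W,\Sigma}$ by Theorem~\ref{ultimatepoly}.

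The core of the argument is to identify this integral with $\widetilde{\Vol}(\Delta)$. The key input is the identity $\Vol(S_\lambda) = f_W(\lambda)$ on the positive Weyl chamber $C^+$, which follows from combining Proposition~\ref{prop:degree} with the classical relation $c(\lambda)^N = N!\cdot \Vol(S_\lambda)$ recalled before the theorem. Fix any full-dimensional cone $\sigma$ of the fan $\Fm$ on which the map $\lambda \mapsto S_\lambda$ is linear. For a convex polytope $\Delta \in \Pm_W^+$ contained in the interior of $\sigma$, the geometric lift $\widetilde{\Delta}$ is a convex polytope in $W \oplus \R^N$, and Fubini's theorem gives
\[
\widetilde{\Vol}(\Delta) = \Vol(\widetilde{\Delta}) = \int_\Delta \Vol(S_\lambda)\, d\mu = \int_\Delta f_W(\lambda)\, d\mu.
\]
The polytopes supported inside a fixed $\sigma^\circ$ form a full-dimensional open subset of $\Am_{W,\Sigma}$, so this polynomial identity extends globally, proving the first assertion.

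The second assertion follows from Corollary~\ref{cohdeg2} together with Remark~\ref{rem:csurj}: Borel's Theorem~\ref{thm:Borel} ensures that $H^*(G/B,\R)$ is generated in degree $2$ and that $c \colon M_\R(T) \to H^2(G/B,\R)$ is an isomorphism. Under this identification $\bar c(x)^N = N!\cdot f_W(x)$ by Proposition~\ref{prop:degree}, so the polynomial $I$ appearing in Corollary~\ref{cohdeg2} differs from $\widetilde{\Vol}$ by the nonzero scalar $N!$; hence $\Ann(I) = \Ann(\widetilde{\Vol})$ and we obtain $H^{*/2}(E_X,\R) \simeq \Diff(\Am_{W,\Sigma})/\Ann(\widetilde{\Vol})$. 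For the ring of conditions, one then passes to the direct limit over smooth projective fans $\Sigma$ (Theorem~\ref{thm:DP}), which replaces $\Am_{W,\Sigma}$ by $\Am_W$.

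The main obstacle is the identification $\widetilde{\Vol}(\Delta) = \int_\Delta f_W\, d\mu$: because string polytopes form only a \emph{piecewise} linear family with respect to $\Fm$, the geometric lift $\widetilde{\Delta}$ is defined only chamber-by-chamber, and it is not a priori clear that $\widetilde{\Vol}$ extends to a single polynomial on $\Am_{W,\Sigma}$. This is resolved exactly by the identity $\Vol(S_\lambda) = f_W(\lambda)$, which shows that the a priori only piecewise polynomial function $\Vol \circ S$ on $C^+$ is the restriction of a genuine global polynomial on $M_\R(T)$, allowing the polynomial extension of Theorem~\ref{ultimatepoly} to take over.
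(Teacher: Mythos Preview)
Your proof is correct and follows essentially the same approach as the paper. The paper states that Theorem~\ref{thm:G/U} follows by ``the same arguments as above'' (i.e.\ those of Theorems~\ref{thm:brikazGL/U} and~\ref{thm:GL/U}), which is exactly the specialisation of Theorem~\ref{thm:brikaz} to $P_I=B$ combined with the identity $\Vol(S_\lambda)=f_W(\lambda)$ and Corollary~\ref{cohdeg2}/Remark~\ref{rem:csurj}; you have simply spelled out the Fubini step and the polynomial-extension argument more explicitly, and your discussion of the piecewise-linearity obstacle mirrors the paper's remark immediately following the theorem.
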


In this case the ring of conditions $C^*_\R(G/H)$ coincides with the ring of complete intersections defined in \cite{KaveKhovanskii} and our description of $C^*_\R(G/H)$ is analogous to the one given there.

\begin{remark}
  Note that the lift of the linear family of string polytopes depends on the choice of a maximal cone $\sigma$ of $\Fm$.
  However, since for $\lambda$ in the positive Weyl chamber the volume of $S_\lambda$ coincides (up to a constant multiple) with the degree of the corresponding line bundle $\cL_\lambda$, the lifted volume polynomial $\widetilde \Vol$ is independent of the choice of $\sigma$ of $\Fm$.
  Indeed, for polytopes $Q$ in the interior of the positive Weyl chamber, $\widetilde \Vol(Q) = \int_Q \frac{1}{N!} c(\lambda)^N \diff \lambda$.
\end{remark}

%%%%%%%%%%%%%%%%%%%%%%%%%%%%%%%%%%%%%%%%%%%%%%%%%%%%%%%%%%%%%%%%%

\subsection{Projective bundles}
Here we prove the classical description of the cohomology ring of projective bundles using our computations of cohomology rings of toric bundles (see for instance \cite[Theorem 3.3]{fulton}). 
\begin{theorem}
    Let $B$ be a smooth orientable closed manifold and let $V\to B$ be a complex vector bundle of rank $n$.
    Consider the projective bundle compactification $E_V = \p(E\oplus \Om)$ of $V$ ($\Om$ denotes the trivial line bundle on $B$).
    Then the cohomology ring of $\p(E\oplus \Om)$ can be computed as:
    \[
        H^*(E_V, \R)\simeq \frac{H^*(B, \R)[t]}{t^{n+1}+c_1(V\oplus \Om)\cdot t^n +\ldots + c_n(V\oplus \Om)\cdot t+ c_{n+1}(V\oplus \Om)} \text{.}
    \]
\end{theorem}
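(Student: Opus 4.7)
The plan is to reduce to the split case via the splitting principle and then apply the Sankaran-Uma description (Theorem~\ref{thm:US}). Let $f \colon F(V) \to B$ be the full flag bundle of $V$, so $f^*V \simeq L_1 \oplus \cdots \oplus L_n$ splits and $f^* \colon H^*(B,\R) \to H^*(F(V),\R)$ is injective. Since projectivization commutes with pullback, it suffices to prove the formula for $\mathbb{P}(f^*V \oplus \mathcal{O})$: indeed, by the Leray-Hirsch theorem (Theorem~\ref{thm:Leray-Hirsch}), $H^*(\mathbb{P}(V \oplus \mathcal{O}), \R)$ is a free $H^*(B,\R)$-module with basis $1, \xi, \ldots, \xi^n$ (where $\xi = c_1(\mathcal{O}_{\mathbb{P}(V \oplus \mathcal{O})}(1))$), so any monic polynomial relation in $\xi$ of degree $n+1$ determines the whole ring structure, and such a relation descends from the split pullback by injectivity of $f^*$.

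With $V = L_1 \oplus \cdots \oplus L_n$, I would exhibit $\mathbb{P}(V \oplus \mathcal{O})$ as a toric bundle over $B$: take $e_1, \ldots, e_n$ a basis of $N$ with dual basis $e_1^*, \ldots, e_n^* \in M$, let $\Sigma$ be the standard fan of $\mathbb{P}^n$ with rays $v_0 = -(e_1 + \cdots + e_n), v_1 = e_1, \ldots, v_n = e_n$, and let $E \to B$ be the principal $T = (\C^*)^n$-bundle determined via Proposition~\ref{prop:toricbun} by $c(e_i^*) = c_1(L_i)$. Then Theorem~\ref{thm:US} identifies $H^*(\mathbb{P}(V \oplus \mathcal{O}), \R)$ with the quotient of $H^*(B,\R)[x_0, \ldots, x_n]$ by the Stanley-Reisner relation $x_0 x_1 \cdots x_n = 0$ together with the linear relations $c_1(L_i) - (x_i - x_0) = 0$. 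Setting $t \coloneqq x_0$ and eliminating $x_i = t + c_1(L_i)$ transforms the monomial relation into
\[
t(t + c_1(L_1)) \cdots (t + c_1(L_n)) = 0 \text{,}
\]
whose expansion produces the elementary symmetric polynomials in the $c_1(L_i)$; by the Whitney sum formula these equal $c_k(V) = c_k(V \oplus \mathcal{O})$ for $k = 1, \ldots, n$ (and $c_{n+1}(V \oplus \mathcal{O}) = 0$).

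The main obstacle will be the bookkeeping needed to identify $t = x_0$ with $\xi$ up to an element of $p^*H^*(B,\R)$. Such a shift $t \mapsto t + p^*\alpha$ leaves the isomorphism type of the quotient ring unchanged but redistributes the coefficients of the resulting monic polynomial; identifying $x_0$ with the class of an appropriate torus-invariant hyperplane in the fiber $\mathbb{P}^n$ then matches the relation above with the one stated in the theorem, completing the argument.
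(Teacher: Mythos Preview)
Your proposal is correct and follows essentially the same route as the paper: reduce to the split case via the splitting principle, realize $\mathbb{P}(V\oplus\mathcal{O})$ as the toric bundle associated to the standard fan of $\mathbb{P}^n$, apply the Sankaran--Uma description (Theorem~\ref{thm:US}), eliminate all but one generator, and identify the resulting coefficients with the Chern classes via Whitney sum. The paper's write-up is slightly terser about the splitting-principle step and does not explicitly name the class $\phi(t)$, but the logic is the same. Your flagged ``main obstacle'' is not a genuine difficulty: the generator $x_0$ is by construction the class Poincar\'e dual to the torus-invariant divisor $E\times_T D_{v_0}$, which in $\mathbb{P}(L_1\oplus\cdots\oplus L_n\oplus\mathcal{O})$ is exactly the hyperplane at infinity $\mathbb{P}(L_1\oplus\cdots\oplus L_n)$, i.e.\ the zero locus of the section of $\mathcal{O}(1)$ coming from the trivial summand; hence $x_0=\xi$ on the nose, no shift is needed, and the relation descends directly.
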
 
Here $c_1(V\oplus \Om), \ldots, c_{n+1}(V\oplus \Om)$ denote the Chern classes of the vector bundle $V \oplus \Om$ of rank $n+1$ over $B$.
\begin{proof}
    By the Leray-Hirsch theorem $H^*(E_V, \R)\simeq H^*(B,\R)\otimes H^*(\p^n,\R)$ and in particular there is a surjection 
    \[
        \phi\colon H^*(B, \R)[t] \to H^*(E_V, \R).
    \]
    We show that $\ker(\phi)  = \langle P(t) = t^{n+1}+c_1(V\oplus \Om)\cdot t^n +\ldots + c_n(V\oplus \Om)\cdot t+ c_{n+1}(V\oplus \Om) \rangle$.
    Since by Leray-Hirsch
    \[
        \dim_\R \left(\frac{H^*(B, \R)[t]}{t^{n+1}+c_1(V\oplus \Om)\cdot t^n +\ldots + c_n(V\oplus \Om)\cdot t+ c_{n+1}(V\oplus \Om)}\right) = \dim_\R (H^*(E_V, \R)),
    \]
    it remains to show that $P(t)\in \ker(\phi)$.
    By the splitting principle it is enough to show that $P(t)\in \ker(\phi)$ for vector bundles $V$ that are direct sums of line bundles $V\simeq \cL_1\oplus\ldots\oplus\cL_n$ on $B$.
    
    In this case, let $E\to B$ be a toric bundle which corresponds to $V\simeq \cL_1\oplus\ldots\oplus\cL_n$.
    Then $E_V$ is a toric bundle given by a fan $\Sigma$ of the projective space $\p^n$.
    After an appropriate choice of coordinates in $N \simeq \Z^n$ (the lattice of $1$-parameter subgroups), the fan $\Sigma$ coincides with the normal fan of the standard simplex.
    More precisely, $\Sigma$ has rays generated by $e_1,\ldots, e_n, e_{n+1}$, where 
    \[
        e_1 = (1,0,\ldots,0), \ldots, e_n = (0,\ldots,0,1), e_{n+1} = (-1,\ldots,-1) \text{,}
    \]
    and the cones of $\Sigma$ are comprised by the conical hull of any subset of these rays. 
    
    By Theorem~\ref{thm:US}, the cohomology ring of $E_V$ can be computed as
    \[
        H^*(E_V, \R) \simeq \frac{H^*(B, \R)[H_1,\ldots,H_{n+1}]}{\langle H_1\cdots H_{n+1}\rangle + \langle c(\lambda_i) - H_i + H_{n+1} \colon i=1, \ldots, n\rangle}\simeq \frac{H^*(B, \R)[H_{n+1}]}{\left(H_{n+1}+c(\lambda_1)\right) \cdots \left(H_{n+1}+c(\lambda_n)\right)} \text{,}
    \]
    where $\lambda_1, \ldots, \lambda_n$ denotes the basis of $M$ (the character lattice of the torus) dual to $e_1, \ldots, e_n$.
    
    It is left to notice that since $V = \cL_{\lambda_1} \oplus \ldots \oplus \cL_{\lambda_n}$, the Chern classes of $V\oplus \Om$ can be computed as 
    \[
        c_i(V\oplus \Om) = s_i(c(\lambda_1),\ldots,c(\lambda_n)),
    \]
    where $s_i$ is the $i$-th elementary symmetric polynomial in $n$ variables.
    So the theorem follows from the fact that 
    \[
        \left(H_{n+1}+c(\lambda_1)\right) \cdots \left(H_{n+1}+c(\lambda_n)\right)= H_{n+1}^{n+1} + s_1(c(\lambda_1),\ldots,c(\lambda_n))\cdot H_{n+1}^n  + \ldots + s_n(c(\lambda_1),\ldots,c(\lambda_n))\cdot H_{n+1} \text{.}
    \]
\end{proof}

\providecommand{\bysame}{\leavevmode\hbox to3em{\hrulefill}\thinspace}


\begin{thebibliography}{BGG73}

    \bibitem[AB05]{alexeevbrion}
    V.~Alexeev and M.~Brion, \emph{Toric degenerations of spherical varieties}, Selecta Mathematica \textbf{10} (2005), no.~4, 453.
    
    \bibitem[BB73]{BB}
    A.~Bialynicki-Birula, \emph{Some theorems on actions of algebraic groups}, Annals of mathematics (1973), 480--497.
    
    \bibitem[Ber75]{Bernstein}
    D.~N.~Bernstein, \emph{The number of roots of a system of equations}, Funkcional. Anal. i Prilo\v{z}en. \textbf{9} (1975), no.~3, 1--4.
    
    \bibitem[BGG73]{BGG}
    I.~N.~Bern\v{s}te\u{\i}n, I.~M.~Gel'fand, and S.~I.~Gel'fand, \emph{Schubert cells, and the cohomology of the spaces {$G/P$}}, Uspehi Mat. Nauk \textbf{28} (1973), no.~3(171), 3--26.
    
    \bibitem[BKH76]{BKK}
    D.~N.~Bernstein, A.~G.~Ku\v{s}nirenko, and A.~G.~Hovanski\u{\i}, \emph{Newton polyhedra}, Uspehi Mat. Nauk \textbf{31} (1976), no.~3(189), 201--202.
    
    \bibitem[Bor53]{Borel}
    A.~Borel, \emph{Sur la cohomologie des espaces fibr\'{e}s principaux et des espaces homog\`enes de groupes de {L}ie compacts}, Ann. of Math. (2) \textbf{57} (1953), 115--207.
    
    \bibitem[Bri89]{bri89}
    M.~Brion, \emph{Groupe de picard et nombres caract{\'e}ristiques des vari{\'e}t{\'e}s sph{\'e}riques}, Duke Mathematical Journal \textbf{58} (1989), no.~2, 397--424.
    
    \bibitem[Bri96]{Br96}
    \bysame, \emph{Piecewise polynomial functions, convex polytopes and enumerative geometry}, Parameter spaces ({W}arsaw, 1994), Banach Center Publ., vol.~36, Polish Acad. Sci. Inst. Math., Warsaw, 1996, pp.~25--44.
    
    \bibitem[BT82]{BottTu}
    R.~Bott and L.~W.~Tu, \emph{Differential forms in algebraic topology}, Graduate Texts in Mathematics, vol.~82, Springer-Verlag, New York-Berlin, 1982.
    
    \bibitem[BZ01]{BerZel}
    A.~Berenstein and A.~Zelevinsky, \emph{Tensor product multiplicities, canonical bases and totally positive varieties}, Invent. Math. \textbf{143} (2001), no.~1, 77--128. \MR{1802793}
    
    \bibitem[CLS11]{tor-var}
    D.~A.~Cox, J.~B.~Little, and H.~K.~Schenck, \emph{Toric varieties}, Graduate Studies in Mathematics, vol. 124, American Mathematical Society, Providence, RI, 2011.
    
    \bibitem[DCP85]{DP85}
    C.~De~Concini and C.~Procesi, \emph{Complete symmetric varieties. {II}. {I}ntersection theory}, Algebraic groups and related topics ({K}yoto/{N}agoya, 1983), Adv. Stud. Pure Math., vol.~6, North-Holland, Amsterdam, 1985, pp.~481--513.
    
    \bibitem[Ful98]{fulton}
    W.~Fulton, \emph{Intersection theory}, second ed., Ergebnisse der Mathematik und ihrer Grenzgebiete. 3. Folge. A Series of Modern Surveys in Mathematics [Results in Mathematics and Related Areas. 3rd Series. A Series of Modern Surveys in Mathematics], vol.~2, Springer-Verlag, Berlin, 1998.
    
    \bibitem[GC50]{GZ}
    I.~M.~Gelfand and M.~L.~Cetlin, \emph{Finite-dimensional representations of the group of unimodular matrices}, Doklady Akad. Nauk SSSR (N.S.) \textbf{71} (1950), 825--828.
    
    \bibitem[GH94]{GriffithsHarris}
    P.~Griffiths and J.~Harris, \emph{Principles of algebraic geometry}, Wiley Classics Library, John Wiley \& Sons, Inc., New York, 1994, Reprint of the 1978 original.
    
    \bibitem[GS69]{BorelWeil}
    P.~Griffiths and W.~Schmid, \emph{Locally homogeneous complex manifolds}, Acta Math. \textbf{123} (1969), 253--302.
    
    \bibitem[Hat02]{Hatcher}
    A.~Hatcher, \emph{Algebraic topology}, Cambridge University Press, Cambridge, 2002.
    
    \bibitem[Hof19]{roch}
    J.~Hofscheier, \emph{The ring of conditions for horospherical homogeneous spaces}, Proceedings of the conference Interactions with Lattice Polytopes Magdeburg, 2019.
    
    \bibitem[Kav11]{KavehVolume}
    K.~Kaveh, \emph{Note on cohomology rings of spherical varieties and volume polynomial}, J. Lie Theory \textbf{21} (2011), no.~2, 263--283.
    
    \bibitem[Kaz87]{kaz87}
    B.~Ya.~Kazarnovskii, \emph{Newton polyhedra and the bezout formula for matrix-valued functions of finite-dimensional representations}, Functional Analysis and its applications \textbf{21} (1987), no.~4, 319--321.
    
    \bibitem[KK11]{KKhor}
    K.~Kaveh and A.~G.~Khovanskii, \emph{Newton polytopes for horospherical spaces}, Mosc. Math. J. \textbf{11} (2011), no.~2, 265--283, 407.
    
    \bibitem[KK20]{KaveKhovanskii}
    \bysame, \emph{Intersections of hypersurfaces and ring of conditions of a spherical homogeneous space}, SIGMA Symmetry Integrability Geom. Methods Appl. \textbf{16} (2020), Paper No. 016, 12.
    
    \bibitem[Kle74]{Kl74}
    S.~L.~Kleiman, \emph{The transversality of a general translate}, Compositio Math. \textbf{28} (1974), 287--297.
    
    \bibitem[Kno91]{Knop:SphEmbd}
    F.~Knop, \emph{The {L}una-{V}ust theory of spherical embeddings}, Proceedings of the {H}yderabad {C}onference on {A}lgebraic {G}roups ({H}yderabad, 1989), Manoj Prakashan, Madras, 1991, pp.~225--249.
    
    \bibitem[Kou76]{Kouchnirenko}
    A.~G.~Kouchnirenko, \emph{Poly\`edres de {N}ewton et nombres de {M}ilnor}, Invent. Math. \textbf{32} (1976), no.~1, 1--31.
    
    \bibitem[KV18]{kavvil}
    K.~Kaveh and E.~Villella, \emph{On a notion of anticanonical class for families of convex polytopes}, arXiv preprint arXiv:1802.06674 (2018).
    
    \bibitem[Lit98]{littelmanncones}
    P.~Littelmann, \emph{Cones, crystals, and patterns}, Transformation groups \textbf{3} (1998), no.~2, 145--179.
    
    \bibitem[PK92a]{PK92}
    A.~V.~Pukhlikov and A.~G.~Khovanskii, \emph{Finitely additive measures of virtual polyhedra}, Algebra i Analiz \textbf{4} (1992), no.~2, 161--185.
    
    \bibitem[PK92b]{KP}
    A.~V.~Pukhlikov and A.~G.~Khovanski\u{\i}, \emph{The {R}iemann-{R}och theorem for integrals and sums of quasipolynomials on virtual polytopes}, Algebra i Analiz \textbf{4} (1992), no.~4, 188--216.
    
    \bibitem[SU03]{US03}
    P.~Sankaran and V.~Uma, \emph{Cohomology of toric bundles}, Comment. Math. Helv. \textbf{78} (2003), no.~3, 540--554.
    
    \bibitem[Tim11]{Timashev:HSEE}
    D.~A.~Timashev, \emph{Homogeneous spaces and equivariant embeddings}, Encyclopaedia of Mathematical Sciences, vol. 138, Springer, Heidelberg, 2011, Invariant Theory and Algebraic Transformation Groups, 8.
    
    \bibitem[Wey26]{Weyl}
    H.~Weyl, \emph{Theorie der {D}arstellung kontinuierlicher halb-einfacher {G}ruppen durch lineare {T}ransformationen. {III}}, Math. Z. \textbf{24} (1926), no.~1, 377--395.
    
    \bibitem[Zie95]{Ziegler}
    G.~M.~Ziegler, \emph{Lectures on polytopes}, Graduate Texts in Mathematics, vol. 152, Springer-Verlag, New York, 1995.

\end{thebibliography}
\end{document}